\DeclareFontFamily{U}{mathx}{}
\DeclareFontShape{U}{mathx}{m}{n}{<-> mathx10}{}
\DeclareSymbolFont{mathx}{U}{mathx}{m}{n}
\DeclareMathAccent{\widehat}{0}{mathx}{"70}
\DeclareMathAccent{\widecheck}{0}{mathx}{"71}
\newtheorem{thm}{Theorem}[section]
\newtheorem{lem}[thm]{Lemma}
\newtheorem{prop}[thm]{Proposition}
\newtheorem{conj}[thm]{Conjecture}
\theoremstyle{definition}
\newtheorem{defn}[thm]{Definition}
\theoremstyle{remark}
\newtheorem{rem}[thm]{Remark}
\numberwithin{equation}{section}
\renewcommand{\leq}{\leqslant}
\renewcommand{\geq}{\geqslant}
\newcommand{\hc}{\mathbf{H}^2_{\mathbb{C}}}
\begin{document}
	
	\date{\today}

	\title[]{Complex hyperbolic geometry of  Chain links}
	\author[J. Ma]{Jiming Ma}
	\address{School of Mathematical Sciences, Fudan University, Shanghai, China}
	\email{majiming@fudan.edu.cn}
	
	\author[B. Xie]{Baohua Xie}
	\address{School of Mathematics, Hunan University, Changsha, China}
	\email{xiexbh@hnu.edu.cn}
	
		\author[M. Xu]{Mengmeng Xu}
	\address{School of Mathematical Sciences, Fudan University, Shanghai, China}
	\email{mengmg.xu@gmail.com}
	
	\keywords{Complex hyperbolic surface, spherical CR uniformization, triangle groups, cusped hyperbolic 3-manifolds.}
	
	\subjclass[2010]{20H10, 57M50, 22E40, 51M10.}
	
	\thanks{Jiming Ma was  supported by NSFC (No.12171092), he is also a member of LMNS, Fudan University. Baohua Xie was supported by NSFC (No.11871202).}
	
    \begin{abstract}
    The complex hyperbolic triangle group
    $\Gamma=\Delta_{4,\infty,\infty;\infty}$
    acting on the complex hyperbolic plane ${\bf H}^2_{\mathbb C}$
    is generated by complex reflections $I_1$, $I_2$, $I_3$ such that
    the product $I_2I_3$ has order four, 
    the products $I_3I_1$, $I_1I_2$ are parabolic
    and the product $I_1I_3I_2I_3$ is an accidental parabolic element.
    Unexpectedly, the product $I_1I_2I_3I_2$ is a hidden accidental parabolic element.
    We show that the 3-manifold at infinity of $\Delta_{4,\infty,\infty;\infty}$ is the complement of the chain link $8^4_1$ in the 3-sphere. 
    In particular, the quartic cusped hyperbolic 3-manifold $S^3-8^4_1$ admits a spherical CR-uniformization.  
    The proof relies on a new technique to show that the ideal boundary of the Ford domain is an infinite-genus handlebody. 
    
    Motivated by this result and supported by the previous studies of various authors, we conjecture that the chain link $C_p$ is an ancestor of the 3-manifold at infinity of the critical complex hyperbolic triangle group $\Delta_{p,q,r;\infty}$, for $3 \leq p \leq 9$.

	
    \end{abstract}

     \maketitle
	
\section{Introduction}

\subsection{Motivation}
The relationship between the topology of a manifold and the geometry that the manifold admits is a classic topic. 
For example, Thurston's work on 3-manifolds has shown that geometry plays an important role in the study of the topology of 3-manifolds.

A {\it spherical CR-structure} on a 3-manifold $M$ is a maximal collection of distinguished charts modelled on the boundary $\partial \mathbf{H}^2_{\mathbb C}$
of the complex hyperbolic plane $\mathbf{H}^2_{\mathbb C}$, where the transition maps are elements in $\mathbf{PU}(2,1)$.
In other words, a {\it spherical CR-structure} is a $(G,X)$-structure with $G=\mathbf{PU}(2,1)$ and $X=S^3$. 
In general, it is difficult to determine whether or not a 3-manifold admits a spherical CR-structure. 
In contrast to other geometric structures admitted by 3-manifolds, there are relatively few known examples admitting spherical CR-structures.

We focus on the following important class of spherical CR-structures: a spherical CR-structure on a 3-manifold $M$  is called {\it uniformizable} if it is
given by $M=\Gamma\backslash \Omega_{\Gamma}$, where $\Omega_{\Gamma}\subset \partial \mathbf{H}^2_{\mathbb C}$ is the discontinuity region of a discrete subgroup $\Gamma < \mathbf{PU}(2,1)$ acting on $\partial \mathbf{H}^2_{\mathbb C}=S^3$.
Thus, the study of discrete subgroups of  $\mathbf{PU}(2,1)$ will be crucial to understand uniformizable spherical CR-structures.
Complex hyperbolic triangle groups provide rich examples of such discrete subgroups.

Let $T_{p,q,r}$ be the abstract reflection triangle group with the presentation
$$\langle \sigma_1, \sigma_2, \sigma_3 | \sigma^2_1=\sigma^2_2=\sigma^2_3=(\sigma_2 \sigma_3)^p=(\sigma_3 \sigma_1)^q=(\sigma_1 \sigma_2)^r=id \rangle,$$
where each of $p,q,r$ is either a positive integer or $\infty$ and $1/p+1/q+1/r<1$. 
One can assume that $p \leq q \leq r$. 
If some $p,q$ and $r$ is equal to $\infty$, then
the corresponding relation vanishes.  
The ideal triangle group is one with $p=q=r=\infty$.  
A $(p,q,r)$-\emph{complex hyperbolic triangle group} is a representation $\rho: T_{p,q,r}\longrightarrow\mathbf{PU}(2,1)$
where the image of each generator is a complex reflection that fixes a complex line. 
It is well known \cite{Pratoussevitch:2005}  that the space of $(p,q,r)$-complex hyperbolic triangle groups has real dimension one when $3 \leq p \leq q \leq r$.  
Therefore, one can parameterize the family of complex hyperbolic triangle groups by $\rho_{t}:T_{p,q,r} \rightarrow \mathbf{PU}(2,1)$ for $t \in [0,1]$. 
Here $\rho_{0}$ corresponds to the $\mathbb{R}$-Fuchsian case, that is, $\rho_{0}(T_{p,q,r})$ stabilizes a $\mathbb{R}$-plane in ${\bf H}^2_{\mathbb C}$.
We denote the image group of a $(p,q,r)$-complex hyperbolic triangle group by $\Delta_{p,q,r}$, and set $\rho_t(\sigma_i)=I_i$.
We further denote the image group by $\Delta_{p,q,r;\infty}$ when $I_1 I_3I_2 I_3$ is parabolic.

Goldman and Parker first discussed the complex hyperbolic deformation of the ideal triangle group in \cite{GoPa}.
They gave an interval in the moduli space of complex hyperbolic ideal triangle groups whose points correspond to discrete and faithful representations.
They conjectured that a complex hyperbolic ideal triangle group $\Delta_{\infty,\infty, \infty}=\langle I_1, I_2, I_3 \rangle$ is discrete and faithful if and only if $I_1 I_2 I_3$ is not elliptic.  
Schwartz proved  Goldman-Parker's conjecture in \cite{Schwartz:2001ann, Schwartz:2006}.    
Furthermore,  Schwartz \cite{Schwartz:2001acta} analyzed the complex hyperbolic ideal triangle group $G$ when $I_1 I_2 I_3$ is parabolic, and showed that the 3-manifold at infinity of the quotient space $\hc/G$ is commensurable with the Whitehead link complement in the 3-sphere. 
In particular, the Whitehead link complement admits a uniformizable spherical CR-structure.

In general, for $\Delta_{p,q,r}= \rho_t(T_{p,q,r})$ with $\rho_t(\sigma_i)=I_i$, set $$W_A=I_1I_3I_2I_3 \quad  \text{and} \quad W_B=I_1I_2I_3.$$
Schwartz has also conjectured the following.

\begin{conj}[Schwartz \cite{schwartz-icm}] The set of $t \in [0,1]$ for which $\rho_{t}$ is a discrete and faithful representation of $T_{p,q,r}$ is the closed interval consisting of the parameters $t$ for which neither $W_A$ nor $W_B$ is elliptic. 
\end{conj}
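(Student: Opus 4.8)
The plan is to split the conjecture into a \emph{necessity} direction (if $W_A$ or $W_B$ is elliptic then $\rho_t$ is not discrete and faithful) and a \emph{sufficiency} direction (if neither is elliptic then $\rho_t$ is discrete and faithful), and to locate the transition between the two regimes by tracking the traces of these two distinguished words as functions of $t$. I would first treat necessity, which is the cleaner direction. In the abstract group $T_{p,q,r}$ both $W_A=\sigma_1\sigma_3\sigma_2\sigma_3$ and $W_B=\sigma_1\sigma_2\sigma_3$ are of infinite order: they are not conjugate into any of the finite special subgroups $\langle\sigma_i,\sigma_j\rangle$, so they cannot be torsion. Hence in a faithful representation their images have infinite order. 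If $\rho_t(W_A)$ is elliptic of finite order it contradicts faithfulness, and if it is elliptic with irrational rotation its cyclic group is non-discrete, contradicting discreteness; the same dichotomy applies to $W_B$. This rules out every $t$ in the elliptic locus at once.

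To make the elliptic locus explicit I would pass to $\Su$ and use the standard discriminant $f(\tau)=|\tau|^4-8\,\mathrm{Re}(\tau^3)+18|\tau|^2-27$, for which a regular element is non-elliptic precisely when $f(\tau)\geq 0$. Writing $\tau_A(t)=\tr\rho_t(W_A)$ and $\tau_B(t)=\tr\rho_t(W_B)$ from the Pratoussevitch normal form of the one-parameter family, one computes the zeros of $f(\tau_A)$ and $f(\tau_B)$; the claim is that $t=0$ is the left endpoint and the first parameter at which either discriminant vanishes is the right endpoint of the discrete-faithful interval. Showing that the non-elliptic locus is a single closed interval, rather than several components, would follow from monotonicity of the relevant trace functions, which I would verify by direct computation. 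The subtle point in this direction is the behaviour at the endpoints, where $W_A$ or $W_B$ becomes \emph{parabolic} rather than loxodromic: these are exactly the critical groups $\Delta_{p,q,r;\infty}$, and they must be shown to remain discrete and faithful despite the accidental parabolics.

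The hard direction, and the genuine obstacle, is sufficiency. Here the approach is to construct a fundamental domain — a Ford or Dirichlet domain for $\rho_t(T_{p,q,r})$ acting on $\hc$ — for every $t$ in the open non-elliptic interval, to identify its faces and side-pairing isometries, and to apply the Poincar\'e polyhedron theorem to conclude simultaneously that the group is discrete and that its presentation is exactly that of $T_{p,q,r}$, hence that $\rho_t$ is faithful. The difficulty is that the combinatorial type of this domain is not uniform: it varies with $(p,q,r)$ and with $t$, and at the critical endpoints the domain becomes non-compact as new cusps open from the accidental parabolics. The paper shows that even the \emph{list} of accidental parabolics is subtler than expected, since the hidden accidental parabolic $I_1I_2I_3I_2$ appears alongside $W_A=I_1I_3I_2I_3$, so any argument hoping to be uniform must anticipate such degenerations. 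For this reason I expect a complete proof to proceed case by case, using the new technique of the present paper — showing that the ideal boundary of the Ford domain is an infinite-genus handlebody and identifying the quotient 3-manifold at infinity — as the template for handling each critical group. The verification of the tessellation and cycle conditions of the Poincar\'e theorem at these degenerate parameters is precisely where I expect the main work, and is the reason the conjecture remains open in full generality.
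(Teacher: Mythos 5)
The statement you are addressing is recorded in the paper as a \emph{conjecture} attributed to Schwartz \cite{schwartz-icm}; the paper offers no proof of it and explicitly notes that it has only been established in a few cases \cite{ParkerWX:2016, ParkerWill:2017}. There is therefore no argument of the paper to compare yours against, and your proposal must stand on its own. Your necessity direction is essentially sound and standard: since $1/p+1/q+1/r<1$, the words $W_A$ and $W_B$ have infinite order in $T_{p,q,r}$, so an elliptic image of finite order contradicts faithfulness and an elliptic image of infinite order generates a non-discrete cyclic group. Two points there still need care, though. First, the claim that $W_A$ and $W_B$ are non-torsion in $T_{p,q,r}$ deserves an actual argument (e.g.\ via the action on the hyperbolic plane or a normal-form argument), not just the remark that they are not conjugate into the special subgroups. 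Second, your assertion that the non-elliptic locus is a single closed interval, to be checked by ``monotonicity of the relevant trace functions,'' is exactly the kind of statement that must be proved for all admissible $(p,q,r)$ simultaneously and is not a routine computation; it is part of the content of the conjecture.

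The genuine gap is the sufficiency direction, and you correctly identify it yourself: constructing a fundamental domain and verifying the hypotheses of the Poincar\'e polyhedron theorem for \emph{every} $t$ in the non-elliptic interval and \emph{every} triple $(p,q,r)$ is precisely what is open. Your outline is the right general strategy --- it is how the known cases were handled --- but the combinatorics of the domain is not uniform in the parameters, hidden accidental parabolics such as $I_1I_2I_3I_2$ appear unpredictably (as this paper demonstrates for $\Delta_{4,\infty,\infty;\infty}$), and no uniform verification of the tessellation and completeness conditions exists. Note also that the present paper does not even supply the template you invoke for the general problem: it treats the single group $\Delta_{4,\infty,\infty;\infty}$, obtains discreteness there by exhibiting the group inside the Gauss--Picard lattice $\mathbf{PU}(2,1;\mathbb{Z}[{\rm i}])$ rather than from a deformation argument, and its main work concerns the topology of the boundary at infinity, not the discreteness locus. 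As written, your text is a reasonable research program, not a proof, and the conjecture remains open.
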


The interval in this conjecture is called the \emph{critical interval}. 
The non-zero end-point of the critical interval is called the \emph{critical point}.  
The group $T_{p,q,r}$ is of \emph{type $A$} if $W_A$ is parabolic in the representation that corresponds to the critical point. 
This means that $W_A$ becomes elliptic before $W_B$ in the moduli space. 
Otherwise, we say the group $T_{p,q,r}$ is of \emph{type $B$}.  
In particular, $T_{p,q,r}$ is of type $A$ if $p \leq 9$ and of type $B$ if $p\ge14$ \cite{schwartz-icm}. 
 Schwartz's conjecture has been proved in a few cases \cite{ParkerWX:2016, ParkerWill:2017}.

The representation at the critical point is very interesting. 
Note that there is exactly one $\mathbb{Z}_{3}$-covering of the Whitehead link complement which has four cusps, and we denote this manifold by $W$.  
If $p,q,r$	are all large enough, then $T_{p,q,r}$ is of type $B$. 
Moreover, Schwartz \cite{Schwartz:2007} showed that the 3-manifold at infinity of the even subgroup of the complex hyperbolic triangle group $\Delta_{p,q,r}$ at the critical point can be obtained from $W$ by Dehn fillings. 
However, for a general critical complex hyperbolic triangle group $\Delta_{p,q,r; \infty}$ of type $A$, the 3-manifold at infinity of it has not been identified yet.
Even so, people have found several more explicit examples of cusped hyperbolic 3-manifolds admitting uniformizable spherical CR-structures at these critical representations \cite{Acosta:2019, deraux-scr, der-fal, jwx,  MaXie2020, MaXie2021}. 	
Almost all of these explicit examples of uniformizable spherical CR-structures rely on difficult and sophisticated analysis. 
People do not know the topological/geometric reason why the 3-manifolds at infinity of these groups corresponding to critical points are what they got. 
Falbel, Guilloux and Will \cite{Falbel-Guilloux-Will} proposed the philosophy of predicting the 3-manifold when there is an accidental parabolic element.  
See \cite{MaXie2023} for the demonstration in a special case. 
With the main result and conjectures in Section \ref{sec:3mfdconj}, this paper appears to be the initial step in understanding all the 3-manifolds at infinity of complex hyperbolic triangle groups $\Delta_{p,q,r; \infty}$ of type $A$.

\subsection{Main result of the paper} In this paper, we prove the following.
\begin{thm} \label{thm:4pp}
Let $\Gamma=\langle I_1, I_2, I_3 \rangle $ be the critical complex hyperbolic  triangle group $\Delta_{4,\infty,\infty;\infty}$. 
Then the 3-manifold $M$ at infinity of the even subgroup $\langle I_1I_2,I_2I_3\rangle$ of  $\Gamma$ is the quartic cusped hyperbolic 3-manifold $S^3-8^4_1$, where $8^4_1$ is the chain link with four components as in Figure \ref{fig:841}.
\end{thm}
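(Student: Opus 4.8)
The plan is to construct an explicit Ford domain for the even subgroup $G=\langle I_1I_2, I_2I_3\rangle$ acting on $\mathbf H^2_{\mathbb C}$, to analyze its intersection with $\partial\mathbf H^2_{\mathbb C}=S^3$, and then to identify the resulting quotient with the chain link complement. First I would fix matrices for $I_1,I_2,I_3$ in $\mathrm{SU}(2,1)$, using the single real parameter guaranteed by Pratoussevitch's dimension count \cite{Pratoussevitch:2005}, and solve for the critical value at which $W_A=I_1I_3I_2I_3$ becomes parabolic. I would normalize so that a convenient parabolic fixed point sits at $\infty$ in the Heisenberg model, which makes the isometric spheres of group elements concrete and lets me track the parabolic conjugacy classes: the two ``geometric'' ones coming from $I_3I_1$ and $I_1I_2$, the accidental one from $W_A$, and the hidden accidental one from $I_1I_2I_3I_2$. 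Since $S^3-8^4_1$ has four cusps, I expect each of these four classes to contribute exactly one cusp, and I would check this bookkeeping early as a sanity test on the normalization.

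Next I would assemble the Ford domain $D$ as the common exterior of the isometric spheres of the relevant words in $G$, and determine which spheres actually contribute faces and how consecutive faces meet along ridges. Because infinitely many isometric spheres accumulate at each cusp, and because bisectors in complex hyperbolic space are not totally geodesic, the combinatorics is far more delicate than in the real hyperbolic setting. I would use the stabilizers of the four parabolic fixed points to organize the spheres into finitely many orbit families, reducing the infinite pattern to a controllable fundamental piece whose side-pairings are induced by the generators of $G$.

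The heart of the argument, and the step I expect to be the main obstacle, is to prove that the ideal boundary $\partial_\infty D = D\cap S^3$ is an infinite-genus handlebody; this is the new technique advertised in the abstract. The difficulty is that the traces of the bisectors on $S^3$ (spinal spheres) intersect along genuinely complicated curves, so the global topology of $\partial_\infty D$ cannot be reduced to a single finite polyhedral computation. I would attack this by building an exhaustion of $\partial_\infty D$ by compact pieces, showing that each piece is a handlebody and that passing to the next combinatorial layer of spinal spheres adds handles in a controlled way, so that the genus grows without bound. Verifying the ridge-cycle conditions through a complex hyperbolic Poincar\'e polyhedron theorem would then simultaneously establish discreteness, yield a presentation of $G$, and confirm that the quotient $G\backslash\Omega_G$ is a manifold rather than an orbifold.

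Finally, I would identify $M=G\backslash\Omega_G$. After truncating the four cusp neighborhoods, the side-pairings give a finite gluing of the compact core, from which I would read off a triangulation or handle decomposition of $M$. To certify that $M\cong S^3-8^4_1$, I would match the peripheral cusp data and the combinatorial gluing against the chain-link complement, for instance by exhibiting the same triangulation that describes $8^4_1$ or by directly building a homeomorphism from the handle decomposition, with consistency checks on the number of cusps, the fundamental group, and the cusp shapes confirming the identification.
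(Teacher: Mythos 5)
Your overall strategy matches the paper's: normalize so that the fixed point of $A=I_1I_2$ is at $q_\infty$, build the Ford domain of the even subgroup as the common exterior of the isometric spheres of the $\langle A\rangle$-orbits of $B$, $B^{-1}$, $B^2$, verify the combinatorics with the Poincar\'e polyhedron theorem for coset decompositions, and your cusp bookkeeping (four classes $A$, $AB$, $AB^2$, $AB^{-1}$) is correct. The gap is in the step you yourself flag as the main obstacle. Proposing to ``build an exhaustion of $\partial_\infty D$ by compact pieces, showing that each piece is a handlebody and that passing to the next combinatorial layer adds handles in a controlled way'' does not supply a method: proving that each compact piece is a handlebody \emph{is} the problem, and nothing in the combinatorics of the spinal spheres rules out that a piece is a compact $3$-manifold with incompressible (or more complicatedly compressible) boundary of the same genus. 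The paper is explicit that knowing $\partial(\partial_\infty D_\Sigma)$ is an infinite-genus surface is not enough. What actually closes this step in the paper is the construction of an explicit $A$-invariant system of compressing disks: ruled surfaces $E_B$ and $E_{B^{-1}}$ (affine interpolations between $\mathbb{C}$-arcs lying on specific spinal spheres), shown by projection arguments and elementary but delicate inequalities to meet $\partial_\infty D_\Sigma$ exactly in their boundaries. Cutting along $\bigcup_k A^k(E_B\cup E_{B^{-1}})$ reduces the region to an $A$-invariant $(\mathbb{R}^2-\mathbb{D}^2)\times\mathbb{R}$, which is what certifies the handlebody structure. Your proposal would need to produce such a disk system (or an equivalent compression argument); without it the exhaustion argument is circular.

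A secondary, smaller issue is the endgame. The paper cuts a fundamental $3$-ball $N$ out of the cut-open region using two further punctured disks $D_\pm$ and a disk $F$, reads off a presentation of $\pi_1(M)$ from the induced $2$-cell decomposition and side-pairings, verifies the isomorphism with $\pi_1(S^3-8^4_1)$ by computer algebra, and then concludes homeomorphism via prime decomposition plus the Poincar\'e conjecture. Your plan to ``exhibit the same triangulation'' or ``directly build a homeomorphism'' with consistency checks on cusp shapes is plausible but substantially harder to execute than the fundamental-group route, and matching cusp count and cusp shapes alone would not certify a homeomorphism; you would still need either a group-theoretic identification as in the paper or a genuine combinatorial isomorphism of triangulations.
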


The link $8^4_1$ in Rolfsen's list \cite{Rolfsen} is the first link with crossing number eight and with four components.
In Hoste-Thistlethwaite's table, $8^4_{1}$ is also the link $L8a21$.

\begin{figure}[htbp]
	\begin{minipage}[t]{0.4 \linewidth}
		\centering
		\includegraphics[height=4.1cm,width=4.1cm]{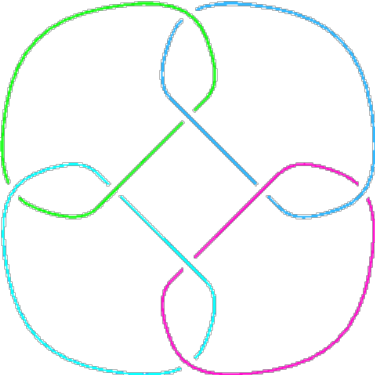}
	\end{minipage}
	\begin{minipage}[t]{0.5 \linewidth}
		\centering
		\includegraphics[height=4.1cm,width=4.1cm]{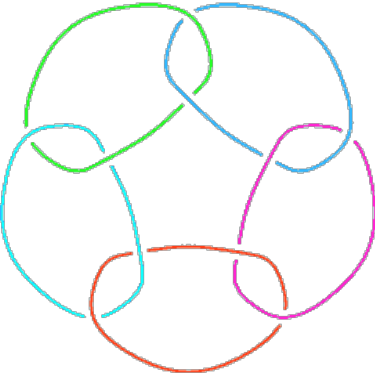}
	\end{minipage}
	\caption{ The chain links $8^4_1$ (the left) and $10^5_1$ (the right).  Link diagrams from https://knotinfo.math.indiana.edu/}
	\label{fig:841}
\end{figure}

Let $A=I_1I_2 \in \Delta_{4,\infty,\infty;\infty}$ be a parabolic element, which stabilizes the infinity. 
We will prove Theorem \ref{thm:4pp} by using the Ford domain $D_{\Sigma}$ of the even subgroup $\Sigma$ of $\Delta_{4,\infty,\infty;\infty}$. But the topology of the boundary at infinity of $D_{\Sigma}$, denoted by $\partial_{\infty}D_{\Sigma}$, is a  more complicated 3-manifold  than those in \cite{Deraux:2016, ParkerWill:2017, jwx, MaXie2020}. 
One reason is that the boundary of $\partial_{\infty}D_{\Sigma}$ is an infinite genus
(singular) surface, but all those in \cite{Deraux:2016, ParkerWill:2017, jwx, MaXie2020} are unbounded cylinders. 
Moreover, even though we know the boundary of $ \partial_{\infty}D_{\Sigma}$  is an infinite genus surface, 
it is also highly non-trivial to prove that $ \partial_{\infty}D_{\Sigma}$ is an infinite genus
handlebody, rather than a more complicated 3-manifold  with incompressible boundary.


Figure \ref{figure:4pppford} indicates that $\partial_{\infty}D_{\Sigma}$ should be an infinite genus
handlebody, where each "hole" corresponds to a genus of the handlebody $\partial_{\infty}D_{\Sigma}$.  
To prove that $\partial_{\infty}D_{\Sigma}$ is a handlebody rigorously, our idea is to introduce a set of $A$-invariant disks $A^k(E_B \cup E_{B^{-1}})$ for $k \in \mathbb{Z}$, which cap off the "holes" in Figure \ref{figure:4pppford}.
 Figure 	\ref{figure:4pppfordabstract} is  a combinatorial model of Figure \ref{figure:4pppford}. Figure  \ref{figure:4pppfordtop} is a small part of Figure 	\ref{figure:4pppfordabstract}.
The disks $E_B$ and $E_{B^{-1}}$ are foliated by affine-arcs, so they are ruled surfaces. 
The intersections  $E_B\cap\partial_{\infty}D_{\Sigma}$ and $E_{B^{-1}}\cap\partial_{\infty}D_{\Sigma}$ are exactly $\partial E_B$ and $\partial E_{B^{-1}}$, respectively:
\begin{itemize}

\item $E_B$ is a purple topological quadrilateral
in Figure \ref{figure:4pppfordtop};

\item $E_{B^{-1}}$ is a red topological quadrilateral in Figure \ref{figure:4pppfordtop}.
	
	
\end{itemize}
Figure \ref{figure:E1E2} shows a realistic view of $E_B$ and $E_{B^{-1}}$. 
Then the set of disks $A^k(E_B \cup E_{B^{-1}})$, for $k \in \mathbb{Z}$, cuts the 3-manifold $\partial_{\infty}D_{\Sigma}$ into a horotube. In other words, $$\partial_{\infty}D_{\Sigma}-(\cup_{k \in \mathbb{Z}}A^k(E_B \cup E_{B^{-1}}))$$ is an  $A$-invariant $(\mathbb{R}^2-\mathbb{D}^2) \times\mathbb{R}$ topologically. 
Taking the fundamental domain of $A$-action on $\partial_{\infty}D_{\Sigma}-(\cup_{k \in \mathbb{Z}} A^k(E_B \cup E_{B^{-1}}))$, using the information from side-pairings of the Ford domain, we can calculate the fundamental group of the 3-manifold $M$ at infinity of $\Sigma$. Thus, we get Theorem \ref{thm:4pp}.

Since there are too many conjugate classes of  parabolic elements  in the group  $\Sigma$, 
any fundamental domain of $\Sigma$ can not be very simple. 
For example, if we take a Dirichlet domain $D$ of  $\Sigma$ centred at the fixed point of $I_2I_3$, then the boundary at infinity $\partial_{\infty}D$ seems to be a genus three topological handlebody. 
It is very difficult to prove that it is exactly a genus three handlebody, compared with the argument in \cite{der-fal}.  
For a Dirichlet domain $D$ of $\Delta_{3,3,4;\infty}$ in \cite{der-fal}, a key step is to show that $\partial_{\infty}D$ is a solid torus, that is, a genus one handlebody!  
Even though our proof of Theorem \ref{thm:4pp}  is much involved,
it seems to the authors that the proof is the most elegant/effective approach.

The fact that there are four, rather than three, cusps of the 3-manifold $M$ at infinity of the even subgroup of $\Delta_{4, \infty,\infty;\infty}$ is unexpected. 
After we have convinced ourselves of this fact (which is not difficult), we continue to study the 3-manifold $M_p$ at infinity of the even subgroup of the complex hyperbolic triangle group $\Delta_{p, \infty,\infty;\infty}$ for $5\leq p \leq 9$.
Inspired by Theorem \ref{thm:4pp} and the main result in \cite{MaXie2021},  we conjecture that the 3-manifold $M_p$  is the complement of the chain link $C_p$ in the 3-sphere. 
Moreover, supported by previous results of various authors, we conjecture that the chain link $C_p$ is an ancestor of 3-manifolds at infinities of complex hyperbolic triangle groups $\Delta_{p,q,r;\infty}$ for each $3 \leq p \leq 9$, see Section \ref{sec:3mfdconj} for more details. 
In other words, the illuminating picture of the 3-manifold at infinity of every type $A$ critical complex hyperbolic triangle group begins to emerge. 
 
 
\textbf{Outline of the paper}: In Section \ref{sec:background}  we give the well-known background
material on the complex hyperbolic plane.  
In Section \ref{section:ford}, we give the combinatorial information of the Ford domain for the group $\Delta_{4,\infty, \infty;\infty}$. 
In Section \ref{section:topology4pppford}, we study the topology of the ideal boundary of the Ford domain of $\Delta_{4,\infty,\infty;\infty}$.
Based on the results in Sections \ref{section:ford} and \ref{section:topology4pppford}, we show that the 3-manifold at infinity of the even subgroup of $\Delta_{4,\infty,\infty;\infty}$ is the hyperbolic chain link $8_1^4$ in Section \ref{section:3mfd4ppp}. 
Finally,
we discuss related topics on 3-manifolds associated with the complex hyperbolic triangle groups with accidental parabolic elements in
Section \ref{sec:3mfdconj}.

\textbf{Acknowledgement}: The authors would like to thank Nathan Dunfield for the helpful communication on SnapPy via email. 
We would also like to thank Fangting Zheng for the help on Magma.
\section{Background}\label{sec:background}
The complex hyperbolic plane is a simply connected K\"{a}hler manifold with constant holomorphic sectional curvature $-1$.
This section provides some basic knowledge of the complex hyperbolic plane. 
We also refer the reader to Goldman's book \cite{Go} for more details.

\subsection{The complex hyperbolic plane}
Consider the Hermitian matrix
$$
H=\left[
\begin{array}{ccc}
0 & 0 & 1 \\
0 & 1 & 0 \\
1 & 0 & 0 \\
\end{array}
\right].
$$
The Hermitian form on ${\mathbb{C}}^3$ associated with $H$ is given by
$\langle {\bf{z}}, {\bf{w}} \rangle={\bf{w}^{*}}H{\bf{z}}$, which has signature $(2,1)$. 
Then ${\mathbb{C}}^3\backslash\{0\}$ is the union of negative cone $V_{-}$, null cone $V_{0}$ and positive cone $V_{+}$, where
\begin{eqnarray*}
	V_{-} &=& \left\{ {\bf{z}}\in {\mathbb{C}}^3: \langle {\bf{z}}, {\bf{z}} \rangle <0 \right\}, \\
	V_{0} &=& \left\{ {\bf{z}}\in {\mathbb{C}}^3: \langle {\bf{z}}, {\bf{z}} \rangle =0 \right\}\backslash\{0\} , \\
	V_{+} &=& \left\{ {\bf{z}}\in {\mathbb{C}}^3: \langle {\bf{z}}, {\bf{z}} \rangle >0 \right\}.
\end{eqnarray*}

\begin{defn}
	Let $P: \mathbb{C}^3\backslash\{0\} \longrightarrow \mathbb{C}P^2$ be the natural projectivization.
	The \emph{complex hyperbolic plane} $\hc$ is defined to be $P(V_{-})$, and its {boundary} $\partial\hc$ is $P(V_{0})$.
	The \emph{Bergman metric} on $\hc$ is given by the distance formula
	\begin{equation}  \label{eq:bergman-metric}
	\cosh^2\left(\frac{d(u,v)}{2}\right)=\frac{\langle {\bf{u}}, {\bf{v}} \rangle\langle {\bf{v}}, {\bf{u}} \rangle}{\langle {\bf{u}}, {\bf{u}} \rangle \langle {\bf{v}}, {\bf{v}} \rangle},
	\end{equation}
	where ${\bf{u}}, {\bf{v}} \in {\mathbb{C}}^3$ are lifts of $u,v$. Here $d(u,v)$ is the \emph{Bergman distance} between two points $u,v \in \hc$.
\end{defn}
The projection model of $\hc$ defined above is also called the {\it Siegel domain}.
Let $q_{\infty}$ be the point at infinity, and let $(z_1,z_2)^T$ be a point of $\overline {\hc}=\hc \cup \partial \hc$.
We use $\mathbf{q}_{\infty}=(1,0,0)^T$ to denote the lift of $q_{\infty}$,
and $(z_1,z_2,1)^T$ as the standard lift of $(z_1,z_2)^T$.

Let $\mathbf{U}(2,1)=\langle A\in \mathbf{GL}(3,\mathbb{C})|A^{*}HA=H\rangle$, 
and $\mathbf{PU}(2,1)=\mathbf{U}(2,1)/\mathbf{U}(1)$. 
Let $\omega$ be the cubic root.
Then $\mathbf{PU}(2,1)=\mathbf{SU}(2,1)/\langle I,\omega I,\omega^2I\rangle$,
which is the holomorphic isometry group of ${\bf H}^2_{\mathbb C}$. 

Elements of $\mathbf{SU}(2,1)$ fall into three types.
An isometry is {\it loxodromic} if it has exactly two fixed points on $\partial \hc$ 
and {\it parabolic} if it has exactly one fixed point on $\partial \hc$. 
An isometry is called {\it elliptic} if it has at least one fixed point in ${\bf H}^2_{\mathbb C}$.
An elliptic element is called {\it regular elliptic} if it has three distinct eigenvalues, otherwise, it is called {\it special elliptic}.

The traces of their matrix realizations can determine the types of isometries; see Theorem 6.2.4 in Goldman \cite{Go}. 
Assume $A\in{\rm{\mathbf{SU}}}(2,1)$ is nontrivial with  real trace. Then $A$ is elliptic if $-1\leq{\rm{tr}(A)}<3$.
In particular, if ${\rm{tr}(A)}=-1,0,1$, $A$ is elliptic of order 2, 3 and 4, respectively.
Moreover, $A$ is {\it unipotent} if ${\rm{tr}(A)}=3$. 

Let $\iota$ be given on the level of homogeneous coordinates by complex conjugation $(z_1,z_2,z_3)^T\longmapsto(\overline{z_1},\overline{z_2},\overline{z_3})^T$, which is an anti-holomorphic isometry of $\hc$.
Any other anti-holomorphic isometry may be written as the composition of $\iota$ and an element of $\mathbf{U}(2,1)$.

\subsection{Totally geodesic submanifolds and complex reflections}
There are five kinds of totally geodesic submanifolds 
in ${\hc}$: points, real geodesics, {\it complex lines} (copies of ${\bf H}^1_{\mathbb C}$), {\it Lagrangian planes} (copies of ${\bf H}^2_{\mathbb R}$) and $\hc$. 
Complex lines are the intersections of projective lines in $\mathbb{C}P^2$ with $\hc$, which are also called complex geodesics.
Lagrangian planes are the images of ${\bf H}^2_{\mathbb R}$ under the action of $\mathbf{PU}(2,1)$.
As the Riemannian sectional curvature of $\hc$ is non-constant, there is no totally geodesic hypersurface in $\hc$.

The ideal boundary of a complex line or a Lagrangian plane on $\partial\hc$ is called a $\mathbb{C}$-circle or a $\mathbb{R}$-circle respectively.
Let $C$ be a complex line.  
A {\it polar vector} of $C$ is the unique positive vector up to scalar multiplication, perpendicular to $C$ with
respect to the Hermitian form. 
Each positive vector corresponds to a complex line.

There is a special class of elliptic elements of order two in $\mathbf{PU}(2,1)$ related to complex lines.
\begin{defn}
	The \emph{complex reflection} on a complex line $C$ with polar vector ${\bf{n}}$ is a holomorphic isometry of order 2, fixing $C$ pointwise, defined by:
	\begin{equation}\label{eq:involution}
	I_{C}({\bf{z}})=-{\bf{z}}+\frac{2\langle {\bf{z}}, {\bf{n}} \rangle}{\langle {\bf{n}}, {\bf{n}} \rangle} {\bf{n}}.
	\end{equation}
	The complex line $C$ is usually called the {\it mirror} of $I_{C}$.
\end{defn}

\subsection{The Heisenberg group}
Let $\mathcal{N}=\mathbb{C}\times \mathbb{R}$ be the \emph{Heisenberg group} with product
\begin{equation*}
    (z,t)\cdot(\zeta,\nu)=(z+\zeta,t+\nu+2\Im(z\bar{\zeta})).
\end{equation*}
Then $\overline{\hc}$ can be identified with ${\mathcal{N}}\times{\mathbb{R}_{\geq0}}\cup \{q_{\infty}\}$.
Any point $q=(z,t,u)\in{\mathcal{N}}\times{\mathbb{R}_{\geq0}}$ has the standard lift
$$
{\bf{q}}=\begin{bmatrix}
\frac{-|z|^2-u+it}{2} \\
z \\
1 \\
\end{bmatrix}.
$$
Here $(z,t,u)$ is called the \emph{horospherical coordinates} of $\overline {\hc}$. 

When $u=0$, the points are on $\partial\hc$.
Hence, we directly identify
$\partial\hc$ with the union $\mathcal{N}\cup \{q_{\infty}\}$. 
Fixed $u_0 >0$, the level set $u=u_0$ is called the \emph{horosphere}  based at $q_{\infty}$, and the sup-level set $u\ge u_0$ is called the \emph{horoball} based at $q_{\infty}$.

The full stabilizer of $\mathbf{q}_{\infty}$ is generated by the isometries of the forms
\begin{equation*}
T_{[z,t]}=\begin{bmatrix}
1 & -\overline{z}& \frac{-|z|^{2}+it}{2} \\ 0 & 1 & z \\ 0 & 0 & 1 \end{bmatrix},\ 
R_{\theta}=\begin{bmatrix}
1 & 0& 0 \\ 0 & e^{i\theta} & 0 \\ 0 & 0 & 1 \end{bmatrix} \ 
\text{and}   \
D_{\lambda}=\begin{bmatrix}
\lambda & 0 & 0 \\ 0 & 1 & 0 \\ 0 & 0 & 1/\lambda \end{bmatrix},
\end{equation*}
where $[z,t]\in\mathcal{N}$, $\theta,\lambda\in \mathbb{R}$ and $\lambda \neq 0$.  
They are called {\it Heisenberg translation}, {\it Heisenberg rotation} and {\it Heisenberg dilation}, respectively.
Their actions on $\mathcal{N}$ are as follows:
\begin{equation*}
    \begin{aligned}
        T_{(z,t)}: (\zeta,\nu)&\longmapsto (z+\zeta,t+\nu+2{\rm{Im}}(z\bar{\zeta}));\\
        R_{\theta}: (\zeta,\nu)&\longmapsto (e^{i\theta}\zeta,\nu);\\
        D_{\lambda}: (\zeta,\nu)&\longmapsto(\lambda \zeta,\lambda^2 \nu).
    \end{aligned}
\end{equation*}

The \emph{Cygan metric} on $\mathcal{N}$  for $p=(z,t)$ and $q=(w,s)\in\mathcal{N}$ is defined by:
	\begin{equation}\label{eq:cygan-metric}
	d_{\textrm{Cyg}}(p,q)=|2\langle {\bf{p}}, {\bf{q}} \rangle|^{1/2}=\left| |z-w|^2-i(t-s+2\Im(z\bar{w})) \right|^{1/2}.
	\end{equation}
In fact, it is the restriction to $\mathcal{N}$ of the \emph{extended Cygan metric}, defined for $p=(z,t,u)$ and $q=(w,s,v)\in\mathcal{N}\times{\mathbb{R}_{\geq 0}}$ by the formula
\begin{equation}\label{eq:cygan-metric-extend}
	d_{\textrm{Cyg}}(p,q)=\left| |z-w|^2+|u-v|-i(t-s+2\Im(z\bar{w})) \right|^{1/2}.
	\end{equation}
The \emph{Cygan sphere} with center $(z_0,t_0)\in\mathcal{N}$ and radius $r>0$ in $\overline {\hc}$, is defined by
\begin{equation}\label{eq:cygan-sphere}
  S_{[z_0,t_0]}(r)=\{ (z,t,u)\in\overline {\hc}: \big||z-z_0|^2+u+\rm{i}(t-t_0+2\Im( z\overline{z_0}))\big|=r^2 \}.  
\end{equation}
Note that $S_{[z_0,t_0]}(r)$ can be obtained by the action of $T_{[z_0,t_0]}$ on $S_{[0,0]}(r)$.
There is another way to define a Cygan sphere:
\begin{defn}\label{def:geographic}
	The \emph{geographic coordinates} $(\alpha,\beta,w)$ of $q=q(\alpha,\beta,w)\in S_{[0,0]}(r)$ is given by the lift
	\begin{equation}\label{eq:geog-coor}
	{\bf{q}}={\bf{q}}(\alpha,\beta,w)=\left[
	\begin{array}{c}
	-r^2e^{-i\alpha}/2 \\
	rwe^{i(-\alpha/2+\beta)} \\
	1 \\
	\end{array}
	\right],
	\end{equation}
	where $\alpha\in [-\pi/2,\pi/2]$, $\beta\in [0, \pi)$ and $w\in [-\sqrt{\cos(\alpha)},\sqrt{\cos(\alpha)}]$.
	In particular, the ideal boundary of $S_{[0,0]}(r)$ on $\partial\hc$ are the points with $w=\pm\sqrt{\cos(\alpha)}$.
\end{defn}

The following property describes the characteristics of the level sets for $\alpha$ and $\beta$.
\begin{prop}[Goldman \cite{Go}, Parker and Will \cite{ParkerWill:2017}]
Let $S_{[0,0]}(r)$ be a Cygan sphere with geographic coordinates $(\alpha,\beta,w)$.
\begin{enumerate}[(i)]
	\item For each $\alpha_0\in [-\pi/2,\pi/2]$, the set of points in $S_{[0,0]}(r)$ with $\alpha=\alpha_0$  is a complex line, called a slice of $S_{[0,0]}(r)$.
	\item For each $\beta_0\in [0,\pi)$, the set of points in $S_{[0,0]}(r)$ with $\beta=\beta_0$ is a Lagrangian plane, called a meridian of $S_{[0,0]}(r)$. 
	\item The set of points with $w=0$ is called the real spine of $S_{[0,0]}(r)$.
	\end{enumerate}
\end{prop}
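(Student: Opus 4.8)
The plan is to prove all three statements by a direct computation in the geographic coordinates \eqref{eq:geog-coor}, combined with the standard algebraic descriptions of the totally geodesic submanifolds: a complex line is the zero locus $\{[\mathbf{z}]:\langle\mathbf{z},\mathbf{n}\rangle=0\}$ of a positive polar vector $\mathbf{n}$, while a Lagrangian plane is the fixed-point set of a nontrivial anti-holomorphic isometric involution of the form $\mathbf{z}\mapsto M\overline{\mathbf{z}}$ with $M$ a scalar multiple of a $\mathbf{U}(2,1)$-matrix. I would fix each of the three coordinates $\alpha,\beta,w$ in turn and read off the resulting locus from the lift \eqref{eq:geog-coor}.

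For (i), fixing $\alpha=\alpha_0$ makes the first entry of the lift the constant $c=-r^2e^{-i\alpha_0}/2$, while the second entry $rwe^{i(-\alpha_0/2+\beta)}=re^{-i\alpha_0/2}\,(we^{i\beta})$ sweeps out the whole disk $\{\zeta:|\zeta|\leq r\sqrt{\cos\alpha_0}\}$ as $(\beta,w)$ ranges over $[0,\pi)\times[-\sqrt{\cos\alpha_0},\sqrt{\cos\alpha_0}]$. Thus the slice is exactly $\{(c,\zeta,1)^T:|\zeta|\le r\sqrt{\cos\alpha_0}\}$. I would then check that this is the intersection with $\overline{\hc}$ of the complex line polar to $\mathbf{n}=(-\overline{c},0,1)^T=(r^2e^{i\alpha_0}/2,0,1)^T$: indeed $\langle(c,\zeta,1)^T,\mathbf{n}\rangle=\overline{n_1}+\overline{n_3}\,c=-c+c=0$, and $\langle\mathbf{n},\mathbf{n}\rangle=-2\,\mathrm{Re}(c)=r^2\cos\alpha_0>0$ for $|\alpha_0|<\pi/2$, so $\mathbf{n}$ is positive and the slice is a genuine complex line (a copy of $\overline{\HuC}$). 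At the endpoints $\alpha_0=\pm\pi/2$ one has $\cos\alpha_0=0$, the disk degenerates to the single point $(c,0,1)^T$, and the slice collapses to a vertex of the sphere.

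For (ii), I would first use the Heisenberg rotation $R_\theta$, which preserves $S_{[0,0]}(r)$ and sends the meridian $\beta=\beta_0$ to the meridian $\beta=\beta_0+\theta$, to reduce to the case $\beta_0=0$. There the meridian is $\{(-r^2e^{-i\alpha}/2,\,rwe^{-i\alpha/2},\,1)^T\}$, a two-real-parameter family. The key step is to exhibit the anti-holomorphic involution $R(\mathbf{z})=M\overline{\mathbf{z}}$ with
\begin{equation*}
M=\begin{bmatrix} 0 & 0 & r^4/4 \\ 0 & -r^2/2 & 0 \\ 1 & 0 & 0 \end{bmatrix},
\end{equation*}
and to verify $M^{*}HM=\tfrac{r^4}{4}H$ (so $R$ is an anti-holomorphic isometry), that $M$ is real with $M^2=\tfrac{r^4}{4}I$ (so $R^2=\mathrm{id}$ projectively), and that $R$ fixes each point of the meridian. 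Since a nontrivial anti-holomorphic isometric involution has a Lagrangian plane as its fixed locus, the meridian lies in that Lagrangian plane, and applying $R_{\beta_0}$ transports the conclusion to arbitrary $\beta_0$. For (iii), setting $w=0$ in \eqref{eq:geog-coor} gives $(-r^2e^{-i\alpha}/2,0,1)^T$, independent of $\beta$; this one-parameter family is a real geodesic lying in every meridian, hence their common intersection, which is by definition the real spine.

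The main obstacle is part (ii): producing the correct involution $M$ and confirming that its fixed-point set is genuinely a Lagrangian plane (a two-real-dimensional totally real submanifold) rather than a complex line or an isolated point. This is what forces the precise coefficients in $M$, while the reduction by $R_\theta$ is what keeps the computation uniform in $\beta_0$; the degenerate behaviour at $\alpha_0=\pm\pi/2$ in part (i) is the only other point requiring care.
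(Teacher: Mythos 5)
Your argument is correct, and it is worth noting at the outset that the paper itself offers no proof of this proposition: it is stated with attribution to Goldman \cite{Go} and Parker--Will \cite{ParkerWill:2017}, where the slice and meridian decompositions of bisectors/Cygan spheres are established. Your computation is essentially the standard verification from those sources. Part (i) checks out exactly as you wrote it: with $c=-r^2e^{-i\alpha_0}/2$ the slice is $\{(c,\zeta,1)^T:|\zeta|^2\le r^2\cos\alpha_0\}$, which coincides with the intersection of $\overline{\hc}$ with the projective line $z_1=cz_3$, polar to the positive vector $(-\overline{c},0,1)^T$; your remark about the degenerate vertices at $\alpha_0=\pm\pi/2$ is an honest caveat the proposition's wording elides. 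In part (ii) the matrix $M$ does satisfy $M^{*}HM=\tfrac{r^4}{4}H$ and $M^2=\tfrac{r^4}{4}I$, and one verifies $M\overline{\mathbf q}(\alpha,0,w)=-\tfrac{r^2e^{i\alpha}}{2}\,\mathbf q(\alpha,0,w)$, so the $\beta=0$ meridian is pointwise fixed. The one step you should not leave implicit is the upgrade from ``the meridian lies in the fixed Lagrangian plane'' to the equality asserted by the proposition: solving $M\overline{\mathbf z}=\lambda\mathbf z$ with $z_3=1$ gives $\lambda=\overline{z_1}$, hence $|z_1|=r^2/2$ and $z_2=rwe^{-i\alpha/2}$ with $w\in\mathbb R$, and the condition $\langle\mathbf z,\mathbf z\rangle\le 0$ forces $\alpha\in[-\pi/2,\pi/2]$ and $w^2\le\cos\alpha$; so the fixed locus of $R$ in $\overline{\hc}$ is \emph{exactly} the closed meridian, not merely a superset. (Alternatively, both sets are closed $2$-disks and the containment plus a dimension count suffices.) Part (iii) is a definition, so your observation that $w=0$ gives the geodesic $\{(-r^2e^{-i\alpha}/2,0,1)^T\}$ common to all meridians is all that can be said. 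In short: the proposal is sound and self-contained where the paper simply cites the literature; the only substantive addition needed is the explicit identification of the fixed-point set in (ii).
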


About the intersection of Cygan spheres, 
Goldman, Parker and Will give us the following result.
\begin{prop}[Goldman \cite{Go}, Parker and Will \cite{ParkerWill:2017}] \label{prop:Giraud}
The intersection of two Cygan spheres is a smooth disk.
\end{prop}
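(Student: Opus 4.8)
The plan is to reduce, by a Heisenberg similarity, to a single explicit real-analytic equation on a standard Cygan sphere, and then to read off both smoothness and the disk topology from the structure of that equation. First I would normalize. The full stabilizer of $q_{\infty}$, generated by the Heisenberg translations $T_{[z,t]}$, rotations $R_{\theta}$ and dilations $D_{\lambda}$, acts by isometries of the extended Cygan metric and carries Cygan spheres to Cygan spheres: translations move the center to any point of $\mathcal{N}$, a dilation rescales the radius, and a rotation fixes the central axis. Applying a suitable such similarity, I may assume the first sphere is the standard sphere $S_{[0,0]}(r)$, equipped with the geographic coordinates $(\alpha,\beta,w)$ of the previous proposition, while the second is an arbitrary $S_{[z_2,t_2]}(r_2)$.

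Next, substituting the parametrization $\mathbf{q}=\mathbf{q}(\alpha,\beta,w)$ of $S_{[0,0]}(r)$ into the defining condition $|2\langle \mathbf{q},\mathbf{p}_2\rangle|=r_2^2$ of the second sphere and using the lift of $\mathbf{p}_2$, I obtain a single real-analytic equation
\[
E(\alpha,\beta,w):=\bigl|\,{-r^{2}e^{-i\alpha}}+2r\,\overline{z_2}\,w\,e^{i(-\alpha/2+\beta)}-|z_2|^{2}-it_2\,\bigr|^{2}-r_2^{4}=0 .
\]
The key structural feature is that, when $z_2\neq 0$, the function $E$ is a genuine quadratic in $w$ with positive leading coefficient $4r^{2}|z_2|^{2}$; hence over each fixed $(\alpha,\beta)$ there are at most two points of $S_1\cap S_2$, and the surface is a double graph over the rectangle $(\alpha,\beta)\in[-\pi/2,\pi/2]\times[0,\pi)$ that folds along the discriminant locus of this quadratic. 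For smoothness I would establish transversality: writing $F_i=|2\langle\mathbf{q},\mathbf{p}_i\rangle|^{2}-r_i^{4}$, one checks that $\nabla F_1$ and $\nabla F_2$ are linearly independent along the intersection (equivalently, that $dE$ is a nonzero one-form on $S_1$ there), which cannot fail for two distinct Cygan spheres that genuinely cross. The implicit function theorem then yields that $S_1\cap S_2$ is a smoothly embedded surface, with interior in $\hc$ and boundary on $\partial\hc$ along $w=\pm\sqrt{\cos\alpha}$. The degenerate aligned case $z_2=0$ is a direct computation: the intersection is then a single slice $\alpha=\alpha_0$ of $S_1$, which is a complex line and manifestly a disk.

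Finally, to identify the homeomorphism type I would show that a suitable projection of $S_1\cap S_2$ — for instance the orthogonal projection onto the complex line through the two centers, or the $(\alpha,\beta)$-projection above — restricts to a diffeomorphism onto a planar region, with the two $w$-sheets meeting along the fold and truncated by the constraint at infinity $|w|\le\sqrt{\cos\alpha}$, so that the pieces assemble into a single $2$-disk whose one boundary circle is $S_1\cap S_2\cap\partial\hc$. The hard part is precisely this global step: controlling how the two sheets glue along the fold and are cut by the constraint at infinity, proving the image region is itself a disk, and excluding spurious components, so as to pin the type down to $D^{2}$ rather than an annulus, a sphere, or a surface of higher genus.
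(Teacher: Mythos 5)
First, a point of reference: the paper does not prove this proposition at all --- it is quoted from Goldman \cite{Go} and Parker--Will \cite{ParkerWill:2017}, so your attempt can only be measured against the standard proof of Giraud's theorem in those sources. Your normalization and the reduction to the single equation $E(\alpha,\beta,w)=0$, quadratic in $w$ with leading coefficient $4r^{2}|z_2|^{2}$, are correct, as is the degenerate case $z_2=0$. But there are two genuine gaps. The smoothness step is asserted circularly: ``linear independence of $\nabla F_1$ and $\nabla F_2$ cannot fail for two spheres that genuinely cross'' is precisely what has to be proved, and tangencies of Cygan spheres do occur --- the paper's own Proposition 3.4 (vi)--(viii) exhibits pairs meeting in a single ideal point, so the proposition is only true under an implicit nondegeneracy hypothesis (distinct centers and an intersection meeting $\hc$), and excluding interior tangencies requires an actual computation rather than an appeal to ``genuinely crossing.'' Second, and more seriously, you explicitly defer the global step --- that the two $w$-sheets, glued along the fold locus and truncated at $|w|=\sqrt{\cos\alpha}$, assemble into a single disk rather than an annulus, a sphere, or several components. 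That step \emph{is} the content of Giraud's theorem; a proof that names it as ``the hard part'' and stops is an outline, not a proof. There is also an unaddressed coordinate degeneracy: the geographic parametrization identifies $(\alpha,\beta+\pi,w)$ with $(\alpha,\beta,-w)$ and collapses $\beta$ along the real spine $w=0$, so ``double graph over the rectangle $[-\pi/2,\pi/2]\times[0,\pi)$'' is not an embedding statement without further care.

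For comparison, the cited proof avoids the fold entirely. Writing the two Cygan spheres as coequidistant bisectors $\{\,|\langle z,v_0\rangle|=|\langle z,v_i\rangle|\,\}$ with common vertex $v_0=\mathbf{q}_{\infty}$, one parametrizes the intersection by the pair of phases $(z_1,z_2)\in S^1\times S^1$ defined by $\langle z,v_i\rangle=z_i\langle z,v_0\rangle$; for fixed phases the point is forced to be
\begin{equation*}
V(z_1,z_2)=\bigl(v_1-\overline{z_1}\,v_0\bigr)\boxtimes\bigl(v_2-\overline{z_2}\,v_0\bigr),
\end{equation*}
a real-analytic map from the torus to $\mathbb{CP}^2$, and the Giraud disk is the sublevel set $\{\langle V,V\rangle\le 0\}$. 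Smoothness is then automatic wherever the map is defined, and the disk statement reduces to showing that this sublevel set on the torus is a disk, which follows from the behaviour of $\langle V,V\rangle$ on the circle factors (equivalently, from the fact that each complex slice of one bisector meets the other in a connected arc). If you want to complete your argument you would essentially have to reprove this; switching to the phase parametrization is the cleaner route.
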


\begin{rem}
   This disk is frequently called a {\it Giraud disk}.
\end{rem}

\subsection{Isometric spheres and Ford polyhedron}
Let $g=(g_{ij})$ be an element of $\mathbf{PU}(2,1)$ not fixing $\mathbf{q}_{\infty}$. 
This implies that $g_{31}\neq 0$.
\begin{defn} \label{def:isosphere}
	The \emph{isometric sphere} of $g$, denoted by $ I(g)$, is the set
	\begin{equation}\label{eq:isom-sphere}
	I(g)=\{ p \in\overline{\hc}: |\langle {\bf{p}}, {\bf{q}}_{\infty} \rangle | = |\langle {\bf{p}}, g^{-1}({\bf{q}}_{\infty}) \rangle| \}.
	\end{equation}
\end{defn}
An isometric sphere is a 3-ball.
The \emph{spinal sphere} of $I(g)$ or $g$ is defined to be $I(g)\cap\partial\hc$, denoted by $\partial_{\infty}I(g)$, which is a 2-sphere.
Besides, $I(g)$ is exactly the Cygan sphere with center
$g^{-1}({\bf{q}}_{\infty})\in\mathcal{N}$
and radius $r_g=\sqrt{2/|g_{31}|}$.
The \emph{interior} of $I(g)$ is the set
\begin{equation}\label{eq:exterior}
\{ p \in \overline{\hc} : |\langle {\bf{p}}, {\bf{q}}_{\infty} \rangle | > |\langle {\bf{p}}, g^{-1}({\bf{q}}_{\infty}) \rangle| \}.
\end{equation}
The \emph{exterior} of $I(g)$ is the set
\begin{equation}\label{eq:interior}
\{ p \in \overline{\hc}: |\langle {\bf{p}}, {\bf{q}}_{\infty} \rangle | < |\langle {\bf{p}}, g^{-1}({\bf{q}}_{\infty}) \rangle| \}.
\end{equation}

We summarize some properties on isometric spheres in the following proposition.
\begin{prop}[\cite{Go}, Section 5.4.5] 
Let $g$ and $h$ be elements of $\mathbf{PU}(2,1)$ which do not fix $\mathbf{q}_{\infty}$ and $g^{-1}(\mathbf{q}_{\infty})\neq h^{-1}(\mathbf{q}_{\infty})$.
Let $f \in\mathbf{PU}(2,1)$ be a unipotent map fixing $\mathbf{q}_{\infty}$.  
Then the following hold:
\begin{enumerate}[(i)]
		\item  $g(I(g))=I(g^{-1})$, and $g$ maps the exterior of $I(g)$ to the interior of $I(g^{-1})$.
		\item  $I(gf)=f^{-1}I(g)$, $I(fg)=I(g)$.
		\item $g(I(g)\cap I(h))=I(g^{-1})\cup I(hg^{-1})$, $h(I(g)\cap I(h))=I(gh^{-1})\cup I(h^{-1})$.
	\end{enumerate}
\end{prop}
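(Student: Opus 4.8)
The plan is to read off all three statements directly from Definition~\ref{def:isosphere}, using only that every $g\in\mathbf{U}(2,1)$ preserves the Hermitian form, $\langle g\mathbf{z},g\mathbf{w}\rangle=\langle\mathbf{z},\mathbf{w}\rangle$, and that $g$ acts bijectively on $\overline{\hc}$. For (i), rewrite $I(g^{-1})=\{p:|\langle p,\mathbf{q}_{\infty}\rangle|=|\langle p,g(\mathbf{q}_{\infty})\rangle|\}$, using $(g^{-1})^{-1}(\mathbf{q}_{\infty})=g(\mathbf{q}_{\infty})$. Invariance gives, for every $p$, the two identities $\langle g(p),g(\mathbf{q}_{\infty})\rangle=\langle p,\mathbf{q}_{\infty}\rangle$ and $\langle g(p),\mathbf{q}_{\infty}\rangle=\langle p,g^{-1}(\mathbf{q}_{\infty})\rangle$. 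Hence the defining equality $|\langle p,\mathbf{q}_{\infty}\rangle|=|\langle p,g^{-1}(\mathbf{q}_{\infty})\rangle|$ of $I(g)$ transforms into $|\langle g(p),g(\mathbf{q}_{\infty})\rangle|=|\langle g(p),\mathbf{q}_{\infty}\rangle|$, i.e. $g(p)\in I(g^{-1})$, which proves $g(I(g))=I(g^{-1})$; the same two identities send the strict inequality cutting out the exterior of $I(g)$ to the one cutting out the interior of $I(g^{-1})$.

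For (ii) I would first note that a unipotent $f$ fixing $q_{\infty}$ fixes the standard lift itself, $f(\mathbf{q}_{\infty})=\mathbf{q}_{\infty}$ (this is where unipotence, not merely fixing the projective point, is used). Then $(fg)^{-1}(\mathbf{q}_{\infty})=g^{-1}f^{-1}(\mathbf{q}_{\infty})=g^{-1}(\mathbf{q}_{\infty})$ gives $I(fg)=I(g)$ at once. For $I(gf)$, applying the computation of (i) to $f$ yields $\langle f(p),\mathbf{q}_{\infty}\rangle=\langle f(p),f(\mathbf{q}_{\infty})\rangle=\langle p,\mathbf{q}_{\infty}\rangle$ and $\langle f(p),g^{-1}(\mathbf{q}_{\infty})\rangle=\langle p,f^{-1}g^{-1}(\mathbf{q}_{\infty})\rangle=\langle p,(gf)^{-1}(\mathbf{q}_{\infty})\rangle$, so $f(p)\in I(g)\iff p\in I(gf)$, i.e. $I(gf)=f^{-1}(I(g))$.

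For (iii), the set $I(g)\cap I(h)$ is a Giraud disk by Proposition~\ref{prop:Giraud}, the hypothesis $g^{-1}(\mathbf{q}_{\infty})\neq h^{-1}(\mathbf{q}_{\infty})$ guaranteeing the two Cygan spheres are distinct. Since $g$ is a bijection, $g(I(g)\cap I(h))=g(I(g))\cap g(I(h))=I(g^{-1})\cap g(I(h))$ by (i). The delicate point is that $g(I(h))$ is \emph{not} itself an isometric sphere, because $g$ does not fix $q_{\infty}$: invariance only gives $g(I(h))=\{p':|\langle p',g(\mathbf{q}_{\infty})\rangle|=|\langle p',gh^{-1}(\mathbf{q}_{\infty})\rangle|\}$, an equidistant locus between two points, neither equal to $q_{\infty}$. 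On $I(g^{-1})$, however, one has $|\langle p',\mathbf{q}_{\infty}\rangle|=|\langle p',g(\mathbf{q}_{\infty})\rangle|$; substituting this into the equation of $g(I(h))$ replaces $g(\mathbf{q}_{\infty})$ by $\mathbf{q}_{\infty}$ and yields exactly $|\langle p',\mathbf{q}_{\infty}\rangle|=|\langle p',gh^{-1}(\mathbf{q}_{\infty})\rangle|$, the equation of $I(hg^{-1})$ (here $(hg^{-1})^{-1}(\mathbf{q}_{\infty})=gh^{-1}(\mathbf{q}_{\infty})$, and $hg^{-1}$ fails to fix $q_{\infty}$ precisely because $g^{-1}(\mathbf{q}_{\infty})\neq h^{-1}(\mathbf{q}_{\infty})$, so $I(hg^{-1})$ is defined). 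Thus the image is the Giraud disk cut out by $I(g^{-1})$ and $I(hg^{-1})$, and exchanging the roles of $g$ and $h$ gives the second formula $h(I(g)\cap I(h))$ in terms of $I(gh^{-1})$ and $I(h^{-1})$.

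The computation is elementary throughout, so the only genuine obstacle is the reduction just described in (iii): one must recognize that the raw image $g(I(h))$ is merely an equidistant hypersurface, and that it is the coupling with the $I(g^{-1})$ equation that collapses it back to a true isometric sphere. (In particular this argument identifies the image as the \emph{intersection} $I(g^{-1})\cap I(hg^{-1})$, the Giraud disk common to the two spheres, which is the form in which it will actually be used for the side-pairing analysis of the Ford domain.)
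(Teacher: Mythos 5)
Your proof is correct. The paper itself offers no argument for this proposition: it is quoted as background from Goldman's book (Section 5.4.5), so there is nothing internal to compare against; your computation is exactly the standard one, resting on the invariance $\langle g\mathbf{z},g\mathbf{w}\rangle=\langle\mathbf{z},\mathbf{w}\rangle$ and on the fact that a unipotent $f$ fixing $q_{\infty}$ fixes the lift $\mathbf{q}_{\infty}$ itself, which is precisely where part (ii) needs unipotence rather than mere fixing of the projective point.

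One point worth flagging: your argument for (iii) establishes
$g\bigl(I(g)\cap I(h)\bigr)=I(g^{-1})\cap I(hg^{-1})$,
with an \emph{intersection}, and this is the correct statement (the image of a Giraud disk under a homeomorphism cannot be a union of two $3$-spheres). The proposition as printed in the paper writes $I(g^{-1})\cup I(hg^{-1})$, which is a typo inherited into the statement; your reduction --- substituting the $I(g^{-1})$ equation into the equidistant locus $g(I(h))$ to collapse it back to the isometric sphere $I(hg^{-1})$ --- both proves the correct version and makes the error in the displayed formula visible. The observation that $g(I(h))$ is a priori only an equidistant hypersurface, not an isometric sphere, is exactly the delicate step, and you handle it correctly, including the check that $hg^{-1}$ does not fix $q_{\infty}$ so that $I(hg^{-1})$ is defined.
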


\begin{defn}The \emph{Ford domain} $D_{\Gamma}$ for a discrete group $\Gamma < \mathbf{PU}(2,1)$ centred at $q_{\infty}$ is the intersection of the closures of the exteriors of all isometric spheres of elements in $\Gamma$ not fixing $\mathbf{q}_{\infty}$. 
 That is,
	$$D_{\Gamma}=\{p\in \overline{\hc}: |\langle \mathbf{p},\mathbf{q}_{\infty}\rangle|\leq|\langle \mathbf{p},g^{-1}(\mathbf{q}_{\infty})\rangle|
	\ \forall g\in \Gamma \ \mbox{with} \ g(\mathbf{q}_{\infty})\neq \mathbf{q}_{\infty} \}.$$
\end{defn}

The boundary at infinity of the Ford domain is made up of pieces of isometric spheres. 
When $q_{\infty}$ is in the domain of discontinuity or is a parabolic fixed point, the Ford domain is preserved by $\Gamma_{\infty}$, the stabilizer of
$q_{\infty}$ in $\Gamma$.  
In this case, 
the fundamental domain for
$\Gamma$ is the intersection of $D_{\Gamma}$ with a fundamental domain for $\Gamma_{\infty}$. 
Facets of codimensions one, two, three and four in $D_{\Gamma}$ are called {\it sides}, {\it ridges}, {\it edges} and {\it vertices}, respectively. 
Moreover, a {\it bounded ridge} is a ridge that does not intersect $\partial {\bf H}^2_{\mathbb C}$.
Otherwise, the ridge is called an {\it infinite ridge}.

It is usually very hard to determine the Ford domain $D_{\Gamma}$ because one should check infinitely many inequalities.
A general method is to guess the Ford polyhedron and check it by using the Poincar\'e polyhedron theorem.
The sides of $D_{\Gamma}$ should be paired by isometries, and the images of $D_{\Gamma}$
under these so-called side-pairing maps should give a local tiling of ${\bf H}^2_{\mathbb C}$.  
If they do, and if, moreover, the quotient of $D_{\Gamma}$ by the identification given by the side-pairing maps is complete, the Poincar\'{e} polyhedron
theorem implies that the images of $D_{\Gamma}$ give a global tiling of $\hc$.

Once a fundamental domain is obtained, one gets a presentation of $\Gamma$. 
Its generators are given by the side-pairing maps together with a generating set of $\Gamma_{\infty}$. 
Its relations are ridge cycles, which correspond to the local tilings, bearing each codimension-two face. 
For more details on the Poincar\'e polyhedron theorem, see \cite{dpp, ParkerWill:2017}.

\section{Combinatoric of Ford domain of %
\texorpdfstring{$\Delta_{4,\infty,\infty;\infty}$}{the complex hyperbolic group} %
}
\label{section:ford}

In this section, we study the combinatoric of the Ford domain of $\Delta_{4,\infty,\infty;\infty}$. The main result is Theorem \ref{thm:fundamental-domain4ppp}.

\subsection{The matrices representation of the triangle group %
\texorpdfstring{$\Delta_{4,\infty,\infty;\infty}$}{the complex hyperbolic group} %
}
\label{subsection:4ppp}

Suppose that two complex reflections $J_1$ and $J_2$ are
in $\mathbf{SU}(2,1)$ such that $J_1J_2$ is a unipotent map fixing $q_{\infty}$. 
Conjugating by a Heisenberg rotation and a Heisenberg dilation if necessary, we may suppose that $J_1$ and $J_2$ are
\begin{equation}\label{eq-J1-J2}
J_1=\left[\begin{matrix}
-1 & 0& 0 \\ 0 & 1 &  0\\ 0 & 0 & -1 \end{matrix}\right], \quad
J_2=\left[\begin{matrix}
-1 & -2 & 2 \\ 0 & 1 & -2 \\ 0 & 0 & -1 \end{matrix}\right].
\end{equation}
We want to find $J_3$ such that $J_2J_3$ is an elliptic element of order 4. 
Besides, $J_3$ satisfies that $J_3J_1$, $J_1J_2$, and $J_1J_3J_2J_3$ are all unipotent. 
Assume that the polar vector of the mirror of $J_3$  is 
\begin{eqnarray*}
	{\bf n}_3 = \left[\begin{matrix} x \\ y+z \cdot  {\rm i} \\ 1\end{matrix}\right],
\end{eqnarray*}
where $x,y,z \in \mathbb{R}$. Then 
$J_3$ has the following form
$$J_3=\left[\begin{matrix}
\frac{-y^2-z^2}{y^2+z^2+2x}& \frac{2x(y- z {\rm i})}{y^2+z^2+2x}& \frac{2x^2}{y^2+z^2+2x} \\ \frac{2(y+z {\rm i})}{y^2+z^2+2x} &\frac{y^2+z^2-2x}{y^2+z^2+2x}&\frac{2x(y+ z {\rm i})}{y^2+z^2+2x} \\ \frac{2}{y^2+z^2+2x} & \frac{2(y- z {\rm i})}{y^2+z^2+2x} & \frac{-y^2-z^2}{y^2+z^2+2x}\end{matrix}\right].$$
Note that $J_3J_1$ is unipotent, which implies that the trace of $J_3J_1$ is $3$ and then $x=0$. 
Similarly, since $J_1J_2$ and  $J_1J_3J_2J_3$ are unipotent, we have $y=1$ and $z= \pm 1$. The groups corresponding to $z= \pm 1$ are conjugated by an anti-holomorphic isometry.   
In this paper, we take $z=1$, then 
$$J_3=\left[\begin{matrix}
-1& 0& 0 \\ 1+ {\rm i}& 1 &0 \\1 & 1-{\rm i} & -1 \end{matrix}\right].$$
Moreover,  $\langle J_1, J_2,J_3 \rangle $  is  a subgroup of the Gauss-Picard  modular group $\mathbf{PU}(2,1;\mathbb{Z}[{ \rm i}])$, proving that $\langle J_1,J_2,J_3\rangle $ is discrete. 	
The discreteness of $\Delta_{4,\infty,\infty;\infty}$ was first noted by Thompson \cite{Thompson:2010}. 
 
For technical reasons, we conjugate the matrices representation of $\Delta_{4,\infty,\infty;\infty}$ by 
$$T=\left[\begin{matrix}
 1& 1& \frac{-1-2 {\rm i}}{2} \\ 0 &1&-1 \\ 0 & 0 & 1\end{matrix}\right].$$
That is, let $I_1=T J_1T^{-1}$, $I_2=T J_2T^{-1}$ and $I_3=T J_3T^{-1}$ be generators of $\Delta_{4,\infty,\infty;\infty}$. 
Specifically: 
\begin{equation}\label{eq-I1-I2_I3}
 I_1=\left[\begin{matrix}
 -1 & 2& 2 \\ 0 & 1 &  2\\ 0 & 0 & -1 \end{matrix}\right], \quad
 I_2=\left[\begin{matrix}
 -1 & 0 & 0 \\ 0 & 1 & 0 \\ 0 & 0 & -1 \end{matrix}\right],\quad
 I_3=\left[\begin{matrix}
 -1/2 & -{\rm i}/2& 1/4 \\ {\rm i} & 0 & {\rm i}/2 \\ 1 & -{\rm i} & -1/2\end{matrix}\right].
 \end{equation}

\subsection{Ford domain of %
\texorpdfstring{$\Delta_{4,\infty,\infty;\infty}$}{the complex hyperbolic group} %
}
\label{subsection:ford}

Let $\Gamma=\langle I_1, I_2, I_3\rangle$ be the group $\Delta_{4,\infty,\infty;\infty}$.
Let $A=I_1I_2$, $B=I_2I_3$. 
The subgroup generated by A and B, denoted by 
$\Sigma=\langle A, B\rangle$, is the even subgroup of $\Gamma$ with index two. 
By direct calculation, we have
\begin{equation*}\label{eq-A-B}
A=\left[\begin{matrix}
1 & 2& -2 \\ 0 & 1 & -2 \\ 0 & 0 & 1 \end{matrix}\right], \quad
B=\left[\begin{matrix}
\frac{1}{2} &\frac{i}{2}&-\frac{1}{4} \\i & 0&\frac{i}{2}\\ -1 &i &\frac{1}{2} \end{matrix}\right].
\end{equation*}
Due to the definition of $\Delta_{4,\infty,\infty;\infty}$, we have that $B$ is elliptic of order 4, and there are three unipotent elements, $A$, $AB$ and $AB^2$. 
Besides, $AB^{-1}$ is also unipotent. 
For future reference, we provide the lifts of their fixed points, as vectors in $\mathbb{C}^{3}$:
\begin{equation*}\label{eq-x1x2x3}
\mathbf{p}_{AB}=x_1=\left[\begin{matrix}
-\frac{1+2{\rm i}}{2} \\ -1  \\  1 \end{matrix}\right], \ 
\mathbf{ p}_{AB^2}=x_2=\left[\begin{matrix}
-\frac{1}{2} \\ -1 \\ 1  \end{matrix}\right] \  \text{and} \ 	\mathbf{ p}_{AB^{-1}}=x_3=\left[\begin{matrix}
\frac{-1+2{\rm i}}{2}\\-1\\ 1\end{matrix}\right]
\end{equation*}

In this subsection, we will construct a Ford domain $D_{\Sigma}$ whose sides are contained in the isometric spheres of some elements of $\Sigma$.  
The reader is preferred to Figures 	\ref{figure:4pppford} and \ref{figure:4pppfordabstract} for the views of 
$D_{\Sigma}$. 
Figure \ref{figure:4pppford} is a realistic view of the ideal boundary of $D_{\Sigma}$ with center $q_{\infty}$, 
which is just for motivation. 
In this subsection, we will show that the local picture of this figure is correct.

In Figure \ref{figure:4pppford}:
\begin{itemize}[-]
	\item the green, black and red spheres are   the spinal spheres of $A^{-1}BA$, $B$ and $ABA^{-1}$ respectively;
	\item the blue, purple and steel-blue spheres are   the spinal spheres of $A^{-1}B^{-1}A$, $B^{-1}$ and $AB^{-1}A^{-1}$ respectively;
	\item the cyan, yellow and pink spheres are the spinal spheres of $A^{-1}B^{2}A$, $B^{2}$ and $AB^{2}A^{-1}$ respectively.
\end{itemize}
Figure \ref{figure:4pppfordabstract} is a combinatorial model of Figure \ref{figure:4pppford}.
Our computations later will show that the combinatorial model coincides with Figure \ref{figure:4pppford}. 

\begin{figure}[tbp]
	\begin{center}
		\begin{tikzpicture}
		\node at (0,0) {\includegraphics[width=11cm,height=6cm]{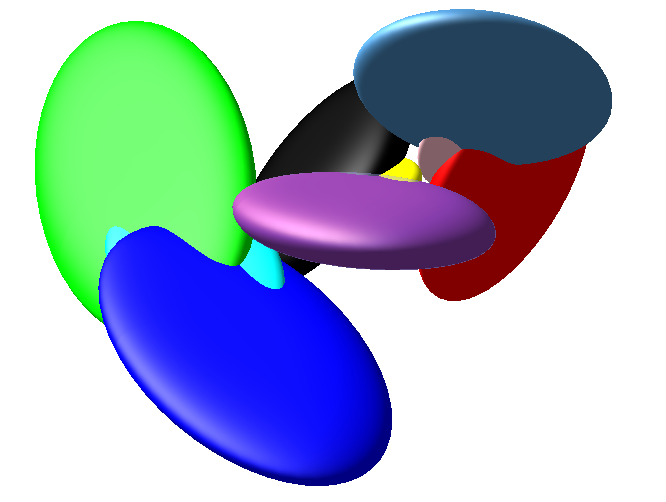}};
		\end{tikzpicture}
	\end{center}
	\caption{A  realistic view the ideal boundary of the Ford domain of $\Delta(4, \infty,\infty;\infty)$.}
	\label{figure:4pppford}
\end{figure}

Let  $I(B)$,  $I(B^{-1})$ and $I(B^2)$ be the isometric spheres for $B$,  $B^{-1}$ and $B^2$, respectively. 
We give some notations for the isometric spheres, which are the images of $I(B)$, $I(B^{-1})$ and $I(B^{2})$ by powers of $A$.
\begin{defn} For $k\in \mathbb{Z}$:
\begin{enumerate}
    \item  Let $I_{k}^{+}$ be the isometric sphere $I(A^{k}BA^{-k})=A^{k}I(B)$ and $\partial_{\infty}I_{k}^{+}$ be its corresponding spinal sphere.
    \item Let $I_{k}^{-}$ be the isometric sphere $I(A^{k}B^{-1}A^{-k})=A^{k}I(B^{-1})$ and $\partial_{\infty}I_{k}^{-}$ be its corresponding spinal sphere.
    \item Let $I^{\star}_{k}$ be the isometric sphere $I(A^{k}B^2A^{-k})=A^{k}I(B^2)$  and $\partial_{\infty}I_{k}^{*}$ be its corresponding spinal sphere. 
\end{enumerate}
\end{defn}
The action of $A$ on $\mathcal{N}$ is given by
\begin{equation}\label{eq-A}
(z,t)\longmapsto (z-2, t+4\Im(z)).
\end{equation}
In particular, we have the following.
\begin{prop}\label{prop:A-action}
For any value of $t_0\in \mathbb{R}$, 
$A$ preserves every $\mathbb{R}$-circle of the form $(x,t_0)$ with $x\in \mathbb{R}$.
\end{prop}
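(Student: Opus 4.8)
The plan is to verify the claim by a direct computation from the explicit formula \eqref{eq-A} for the action of $A$ on the Heisenberg group $\mathcal{N}$. First I would fix $t_0\in\mathbb{R}$ and describe the relevant $\mathbb{R}$-circle concretely as the set $L_{t_0}=\{(x,t_0)\in\mathcal{N}:x\in\mathbb{R}\}$, consisting of those points of $\mathcal{N}$ whose complex horizontal coordinate is real and whose vertical coordinate equals the fixed constant $t_0$. That $L_{t_0}$ is genuinely an $\mathbb{R}$-circle follows because the standard lifts of the points of $L_0=\{(x,0):x\in\mathbb{R}\}$ are real vectors, so $L_0$ is contained in the $\mathbb{R}$-circle fixed by the conjugation $\iota$; the vertical Heisenberg translation $T_{[0,t_0]}\in\mathbf{PU}(2,1)$ then carries $L_0$ to $L_{t_0}$, and isometries send $\mathbb{R}$-circles to $\mathbb{R}$-circles.

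The key step is then a one-line substitution. Applying \eqref{eq-A} to a point $(x,t_0)\in L_{t_0}$ with $x\in\mathbb{R}$ gives
\[
A(x,t_0)=(x-2,\ t_0+4\Im(x)).
\]
Since $x$ is real, $\Im(x)=0$, so the vertical coordinate is unchanged and the image is $(x-2,t_0)$. As $x-2\in\mathbb{R}$, this point again lies in $L_{t_0}$, whence $A(L_{t_0})\subseteq L_{t_0}$. Finally, the map $x\mapsto x-2$ is a bijection of $\mathbb{R}$ (its inverse $x\mapsto x+2$ is induced by $A^{-1}$), so $A$ restricts to a bijection of $L_{t_0}$ onto itself; hence $A$ preserves $L_{t_0}$.

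There is no serious obstacle here: the whole content is the observation that the vertical shift $4\Im(z)$ appearing in \eqref{eq-A} vanishes precisely when $z$ is real. This same point also explains \emph{why} these particular $\mathbb{R}$-circles are the invariant ones, which is the use the proposition is put to later: an $\mathbb{R}$-circle whose horizontal projection is a non-horizontal line would pick up a nonzero $4\Im(z)$ term under $A$ and be pushed to a different vertical level, so it would not be preserved.
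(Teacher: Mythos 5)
Your proof is correct and takes essentially the same route as the paper, which simply reads the claim off from the displayed action $(z,t)\mapsto(z-2,\,t+4\Im(z))$ of $A$ on $\mathcal{N}$: for $z=x$ real the vertical shift $4\Im(x)$ vanishes, so $(x,t_0)\mapsto(x-2,t_0)$. Your additional verification that $\{(x,t_0):x\in\mathbb{R}\}$ really is an $\mathbb{R}$-circle (via $\iota$ and the vertical Heisenberg translation) is a welcome detail the paper leaves implicit.
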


As $A$ is unipotent and fixes $q_{\infty}$, it is a Cygan isometry and preserves the radii of the isometric spheres.  
This directly proves 
\begin{prop}\label{prop:center-radius4ppp}
	For any integer $k\in \mathbb{Z}$:
    \begin{enumerate}[(i)]
	 \item The isometric spheres $I_{k}^{+}$ and $I_{k}^{-}$ have the same radius $\sqrt{2}$ and are centered at $(-2k+{\rm i},-4k)$ and $(-2k-{\rm i},4k)$ in Heisenberg coordinates, respectively.
	 \item The isometric sphere $I^{\star}_{k}$ has radius $1$ and is centered at $(-2k,0)$ in Heisenberg coordinates.
	\end {enumerate} 	
\end{prop}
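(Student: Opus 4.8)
The plan is to establish the statement for $k=0$ by a direct matrix computation and then transport the three base isometric spheres by powers of $A$. Two facts recalled above do all the structural work: $I(g)$ is the Cygan sphere with centre $g^{-1}(\mathbf{q}_\infty)\in\mathcal N$ and radius $r_g=\sqrt{2/|g_{31}|}$, and $A$, being unipotent and fixing $q_\infty$, is a Cygan isometry, hence carries Cygan spheres to Cygan spheres while preserving their radii. Since $I^{+}_k=A^kI(B)$, $I^{-}_k=A^kI(B^{-1})$ and $I^{\star}_k=A^kI(B^2)$ by definition, it suffices to compute the radius and centre of each of $I(B)$, $I(B^{-1})$, $I(B^2)$ and then apply $A^k$.

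First I would treat the base case. The radii are immediate from the lower-left entries: $B_{31}=(B^{-1})_{31}=-1$ give $r=\sqrt{2/1}=\sqrt2$, while a short multiplication gives
\[
B^2=\begin{bmatrix} 0 & 0 & -\tfrac12 \\ 0 & -1 & 0 \\ -2 & 0 & 0\end{bmatrix},
\]
so that $(B^2)_{31}=-2$ and $r=\sqrt{2/2}=1$. For the centres I apply $B^{-1}$, $B$ and $(B^2)^{-1}$ to $\mathbf{q}_\infty=(1,0,0)^T$ and normalise the third coordinate to $1$; reading off the resulting standard lifts through the top entry $\tfrac12(-|z|^2+\mathrm{i}t)$ identifies the Heisenberg centres of $I(B)$, $I(B^{-1})$ and $I(B^2)$ as $(\mathrm{i},0)$, $(-\mathrm{i},0)$ and $(0,0)$, respectively. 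This is exactly the $k=0$ case of the statement.

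It then remains to transport by $A^k$. Radius invariance is immediate from the fact that $A$ is a Cygan isometry, giving radii $\sqrt2,\sqrt2,1$ for all $k$. For the centres, since $A$ is a Heisenberg isometry the centre of $A^kI(g)$ is precisely $A^k$ applied to the centre of $I(g)$, so I only need a closed form for the iterates of the action \eqref{eq-A}, $(z,t)\mapsto(z-2,\,t+4\Im(z))$. The key observation is that $A$ translates only the real part of $z$, so $\Im(z)$ is preserved along every $A$-orbit; iterating therefore yields $A^k(z_0,t_0)=(z_0-2k,\,t_0+4k\,\Im(z_0))$, where the increment in the $t$-coordinate is governed by the sign of $\Im(z_0)$. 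Evaluating this at the three base centres $(\mathrm{i},0)$, $(-\mathrm{i},0)$ and $(0,0)$ then produces the centres listed in (i) and (ii), completing the proof.

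The computations are entirely elementary, and I expect no genuine obstacle: this is a bookkeeping lemma recording the combinatorial data of the isometric spheres that feed into the Ford-domain analysis of the later sections. The only steps demanding any care are the $3\times3$ product giving $B^2$ (and hence the radius $1$, which distinguishes the $I^{\star}_k$ from the $I^{\pm}_k$) and the verification that the $t$-coordinate accumulates linearly in $k$, which is exactly where the $A$-invariance of $\Im(z)$ from \eqref{eq-A} is used.
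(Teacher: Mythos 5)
Your strategy is exactly the one the paper uses (its entire proof is the observation that $A$ is a Cygan isometry preserving radii, so everything reduces to the $k=0$ data), and your base-case computations are correct: $B_{31}=(B^{-1})_{31}=-1$ and $(B^2)_{31}=-2$ give the radii $\sqrt2,\sqrt2,1$, and $B^{-1}(\mathbf q_\infty)$, $B(\mathbf q_\infty)$, $B^{\pm2}(\mathbf q_\infty)$ normalise to the standard lifts of $(\mathrm i,0)$, $(-\mathrm i,0)$, $(0,0)$. The iteration formula $A^k(z_0,t_0)=(z_0-2k,\ t_0+4k\,\Im z_0)$ is also correct.

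The problem is your final sentence. Evaluating that formula at the base centres gives $A^k(\mathrm i,0)=(-2k+\mathrm i,\ 4k)$ and $A^k(-\mathrm i,0)=(-2k-\mathrm i,\ -4k)$, which is \emph{not} what is printed in the statement: the signs of the $t$-coordinates there are reversed. So either you made a silent sign slip in the last evaluation, or you are asserting agreement with a statement that itself carries a typo. Cross-checking against the rest of the paper shows it is the latter: the explicit sphere equations used in the proof of Proposition \ref{prop:pair-disjoint4ppp}, namely $\left((x+2)^2+(y-1)^2+u\right)^2+\left(s-4-2(x+2y)\right)^2=4$ for $I_1^{+}$ and $\left((x+2)^2+(y+1)^2+u\right)^2+\left(s+4+2(x-2y)\right)^2=4$ for $I_1^{-}$, correspond to centres $(-2+\mathrm i,4)$ and $(-2-\mathrm i,-4)$, agreeing with your computation and not with the displayed statement; likewise the tangency point $x_1=(-1,-2)$ lies on the Cygan spheres of radius $\sqrt2$ centred at $(\mathrm i,0)$ and $(-2-\mathrm i,-4)$, but not on the one centred at $(-2-\mathrm i,4)$. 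You should therefore not claim that the evaluation ``produces the centres listed''; state the centres your computation actually yields and flag the sign discrepancy with the proposition as printed.
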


We first consider the intersections of the isometric spheres $I_{k}^{\pm}$ and $I^{\star}_k$ for $k\in \mathbb{Z}$. 
By the $A$-symmetry, $I_{l}^{+} \cap I_{l+k}^{+}$ is non-empty  if and only if $I_{0}^{+} \cap I_{k}^{+}$ is non-empty. 
Similar properties hold for other intersections of pairs of isometric spheres.


\begin{prop}\label{prop:pair-disjoint4ppp}For any $k\in \mathbb{Z}$:
	\begin{enumerate}[(i)]
	\item  \label{item:Ikplus} $I_{0}^{+}$ is disjoint from $I_{k}^{+}$ when $|k| \geq 2$.
	\item  \label{item:Ikminus} $I_{0}^{-}$ is disjoint from $I_{k}^{-}$ when $|k| \geq 2$.
	\item  \label{item:IKplusminus} $I_{0}^{+}$ is disjoint from $I_{k}^{-}$ when $k \neq 0$.
	\item  \label{item:Jk}  $I^{\star}_{0}$ is disjoint from $I^{\star}_{k}$ when $|k| \geq 2$.
    \item  \label{item:JkIk} $I^{\star}_{0}$ is disjoint from $I^{\pm}_{k}$ when $|k| \geq 1$.
	\item  \label{item:I0plusI1minustangent}   $I_{0}^{+}$ is tangent with  $I_{1}^{-}$ at the point $x_1$.
	\item  \label{item:I0minusI1plustangent}   $I_{0}^{-}$  is tangent  with  $I_{1}^{+}$ at the point $x_3$.
	\item  \label{item:Jktangent}   $I^{\star}_{0}$ is  tangent  with  $I^{\star}_{1}$ at the point $x_2$.
	\end{enumerate}
\end{prop}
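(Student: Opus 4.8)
The plan is to reduce every assertion to a single comparison between $I_0^{\bullet}$ and one translate $I_k^{\bullet}$, and then to read off (non)intersection from the Cygan distance between their centers. By the $A$-symmetry recorded just before the statement, $I_l^{\bullet}\cap I_{l+k}^{\bullet}$ is empty (resp. a single point) exactly when $I_0^{\bullet}\cap I_k^{\bullet}$ is, so it suffices to treat the pairs in which one sphere is $I_0^{+}$, $I_0^{-}$ or $I_0^{\star}$. The centers and radii of all spheres are furnished by Proposition \ref{prop:center-radius4ppp}, so each case becomes a finite computation.

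For the disjointness statements (i)--(v) I would use the triangle inequality for the extended Cygan metric \eqref{eq:cygan-metric-extend}: since this is a genuine metric on $\overline{\hc}$, two closed Cygan balls with centers $c_1,c_2$ and radii $r_1,r_2$ are disjoint as soon as $d_{\textrm{Cyg}}(c_1,c_2)>r_1+r_2$, and then their bounding spheres are disjoint a fortiori. Thus it is enough to evaluate $d_{\textrm{Cyg}}$ between the relevant centers with \eqref{eq:cygan-metric} and to check the strict inequality. The most illuminating case is the cross pair: a short computation gives $d_{\textrm{Cyg}}(I_0^{+},I_k^{-})=2\sqrt{k^2+1}$, which equals $r_1+r_2=2\sqrt2$ precisely at $|k|=1$ and strictly exceeds it for $|k|\ge2$, settling (iii) and at the same time pinpointing the tangencies (vi),(vii). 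The $\star$--$\star$ pairs give the equally clean $d_{\textrm{Cyg}}(I_0^{\star},I_k^{\star})=2|k|$, yielding the threshold $|k|\ge2$ in (iv); the same-type $\pm$ pairs and the $\star$--$\pm$ pairs lead to slightly bulkier radicands but to the same comparison against $r_1+r_2\in\{2\sqrt2,\,1+\sqrt2\}$, so I would simply tabulate them.

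For the tangency statements (vi)--(viii) the previous inequality degenerates to the equality $d_{\textrm{Cyg}}(c_1,c_2)=r_1+r_2$, so the metric alone no longer decides between tangency and a transverse intersection, and I would bring in the algebraic structure of the spheres. Using the identities $I(fg)=I(g)$ and $I(gf)=f^{-1}I(g)$ for $f$ unipotent fixing $\mathbf{q}_{\infty}$, one rewrites $I_0^{+}=I(B)=I(AB)$ and $I_1^{-}=A\,I(B^{-1})=I\!\left((AB)^{-1}\right)$; likewise $I_0^{-},I_1^{+}$ become $I(AB^{-1}),I\!\left((AB^{-1})^{-1}\right)$, and, using that $B$ has order $4$ so that $B^{-2}=B^{2}$, the pair $I_0^{\star},I_1^{\star}$ becomes $I(AB^{2}),I\!\left((AB^{2})^{-1}\right)$. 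Since $AB$, $AB^{-1}$ and $AB^{2}$ are exactly the unipotent elements with fixed points $x_1$, $x_3$ and $x_2$, each tangency is an instance of the principle that the isometric spheres of a parabolic $P$ (not fixing $\mathbf{q}_{\infty}$) and of $P^{-1}$ touch at $\mathrm{Fix}(P)$: that point is fixed by $P$, which carries $I(P)$ to $I(P^{-1})$ by property (i). I would make this concrete with the direct Hermitian computation $|\langle x_1,\mathbf{q}_{\infty}\rangle|=|\langle x_1,(AB)^{\pm1}\mathbf{q}_{\infty}\rangle|=1$, placing $x_1$ on both spheres, and analogously for $x_2,x_3$.

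The step I expect to be the main obstacle is the passage from ``the two spheres meet at $x_i$'' to ``they are tangent there,'' that is, ruling out that the intersection is a Giraud disk as in Proposition \ref{prop:Giraud}. The distance equality already forces every common point $x$ to saturate the triangle inequality, $d_{\textrm{Cyg}}(c_1,x)+d_{\textrm{Cyg}}(x,c_2)=d_{\textrm{Cyg}}(c_1,c_2)$, which strongly constrains $x$; to finish I would solve the two defining equations \eqref{eq:cygan-sphere} simultaneously and verify that $x_i$ is their \emph{unique} common solution. Exhibiting one common point together with the distance equality is not by itself enough, so establishing this uniqueness is where the real work of (vi)--(viii) lies. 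Once it is in place, all eight assertions follow.
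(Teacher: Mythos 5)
Your proposal follows essentially the same route as the paper: reduce to translates of $I_0^{\pm},I_0^{\star}$ by the $A$-symmetry, compare the Cygan distance between centers with the sum of the radii for the disjointness claims, and for the tangencies exhibit the common point and then solve the two Cygan-sphere equations simultaneously to show it is the unique solution --- which is exactly the computation the paper performs in horospherical coordinates for (vi), (vii) and the $k=\pm1$ case of (iv)/(viii). Your extra observation that $I_0^{+}=I(AB)$, $I_1^{-}=I\bigl((AB)^{-1}\bigr)$ (and likewise for $AB^{-1}$, $AB^{2}$), so that the unipotent element $P$ places $\mathrm{Fix}(P)$ on both $I(P)$ and $I(P^{-1})$, is a cleaner way to locate the tangency points than the paper's direct solution of the system, and it is correct because the eigenvalue of a unipotent $P$ at its fixed vector is $1$. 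The one substantive divergence is at $|k|=1$ in item (iii): there the center distance equals $r_1+r_2$, so the metric comparison decides nothing, and you correctly fold this case into the tangencies (vi)--(vii); the paper instead asserts in its proof of (iii) that $\partial_\infty I_0^{+}\cap\partial_\infty I_{\pm1}^{-}$ is empty, which is contradicted by its own proof of (vi) (the point $x_1$ lies in that intersection). Your reading is the internally consistent one, so no gap on your side; just be aware that the uniqueness computations you defer are the same ones the paper also only sketches.
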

\begin{proof} 
For \ref{item:Ikplus}, the Cygan distance between the centers of  $I_{0}^{+}$ and $I_{k}^{+}$  is $|2k|$. 
It is greater than $2 \sqrt{2}$ when $|k| \geq 2$, so $I_{0}^{+}$ is disjoint from $I_{k}^{+}$ if $|k| \geq 2$.  
Similarly, we have \ref{item:Ikminus}.  

For \ref{item:IKplusminus}, the Cygan distance between the centers of $I_{0}^{+}$ and $I_{k}^{-}$ is $2 \sqrt{k^2+1}$, which is greater than $2 \sqrt{2}$ when $|k| \geq 2$.
Hence, $I_{0}^{+}$ is disjoint from $I_{k}^{-}$ if $|k| \geq 2$.  
When $k= \pm 1$, we assume that 
$$(x+y {\rm i},t) \in \partial_{\infty} I_{0}^{+} \cap \partial_{\infty} I_{ \pm 1}^{-}$$ 
in Heisenberg coordinates with $x,y,t \in \mathbb{R}$. 
Then considering the equation of an isometric sphere in Definition \ref{def:isosphere}, it is easy to show that there is no $(x+y {\rm i},t)$ lying in $\partial_{\infty} I_{0}^{+} \cap \partial_{\infty} I_{ \pm 1}^{-}$. 
This implies that $I_{0}^{+} \cap I_{ \pm 1}^{-}$ is empty in turn.

For \ref{item:Jk}, the Cygan distance between the centers of  $I^{\star}_{0}$ and $I^{\star}_{k}$  is $|2k|$. 
This is greater than $2$ when $|k| \geq 2$. So $I^{\star}_{0}$ is disjoint from $I^{\star}_{k}$ if $|k| \geq 2$. 
When $k= 1$, we assume  $(x+y {\rm i},s,v) \in I^{\star}_{0} \cap  I^{\star}_{ 1}$ in horospherical coordinates with $x,y,s \in \mathbb{R}$ and $v \in \mathbb{R}_{\ge0}$.  
The only solution is:
$$x=-1, \quad y=0, \quad s=0, \quad v=0. $$
This is exactly the point $\mathbf{p}_{AB^2}=x_2$.
When $k= -1$,  we assume  $(x+y {\rm i},s,v) \in  I^{\star}_0\cap  I^{\star}_{-1}$ in horospherical coordinates with $x,y,s \in \mathbb{R}$ and $v \in \mathbb{R}_{\ge0}$.
The only solution is   
$$x=1, \quad y=0, \quad s=0, \quad v=0. $$
This is the point $A^{-1}(x_2)$.  
In particular, we also proved \ref{item:Jktangent}.

For \ref{item:JkIk},  the Cygan distance between the centers of  $I^{\star}_{0}$ and  $I^{\pm }_{k}$  is $\sqrt{16k^4+24k^2+1}$, which is greater than $\sqrt{2}+1$ when $|k| \geq 1$. 
So $I^{\star}_{0}$ is disjoint from $I^{\pm}_{k}$ if $|k| \geq 1$. 

For \ref{item:I0plusI1minustangent}, we assume $(x+y {\rm i}, s,u) \in  I^{+}_{0} \cap  I^{-}_{1}$ in horospherical coordinates with $x,y,s \in \mathbb{R}$ and $u \in \mathbb{R}_{+}$. 
Consider the  equations of the isometric  spheres of  $I^{+}_{0}$ and $ I^{-}_{1}$ in Definition  \ref{def:isosphere}, we have 
\begin{flalign}
\left\{ \begin{aligned}
(x^2+(y-1)^2+u)^2+(s-2x)^2=4; \qquad& \  \\ 
((x+2)^2+(y+1)^2+u)^2+(s+4+2(x-2y))^2=4. \qquad& \  
\end{aligned}
\right.
\end{flalign}
It is easy to show that 
$$x=-1, \quad y=0, \quad s=-2, \quad u=0 $$ is the only solution of the above two equations, corresponding to point $\mathbf{p}_{AB}=x_1$.

For \ref{item:I0minusI1plustangent}, we assume
$(x+y {\rm i}, s,u) \in  I^{-}_{0} \cap  I^{+}_{1}$ in horospherical coordinates with $x,y,s \in \mathbb{R}$ and $u \in \mathbb{R}_{\ge0}$. 
Consider the  equations of the Cygan spheres of  $I^{+}_{0}$ and $ I^{-}_{1}$ in Definition \ref{def:isosphere}, we have 
\begin{flalign}
\left\{ \begin{aligned}
(x^2+(y+1)^2+u)^2+(s+2x)^2=4; \qquad& \  \\ 
\left((x+2)^2+(y-1)^2+u\right)^2+\left(s-4-2(x+2y)\right)^2=4. \qquad& \  
\end{aligned}
\right.
\end{flalign}
It is easy to show that $$ x = -1, \quad y = 0, \quad s = 2, \quad u = 0, $$ is the only solution of the above two equations, which is the point $\mathbf{p}_{AB^{-1}}=x_3$.
\end{proof}
\begin{figure}[htbp]
	\begin{center}
		\begin{tikzpicture}
		\node at (0,0) {\includegraphics[width=11.5cm,height=6.0cm]{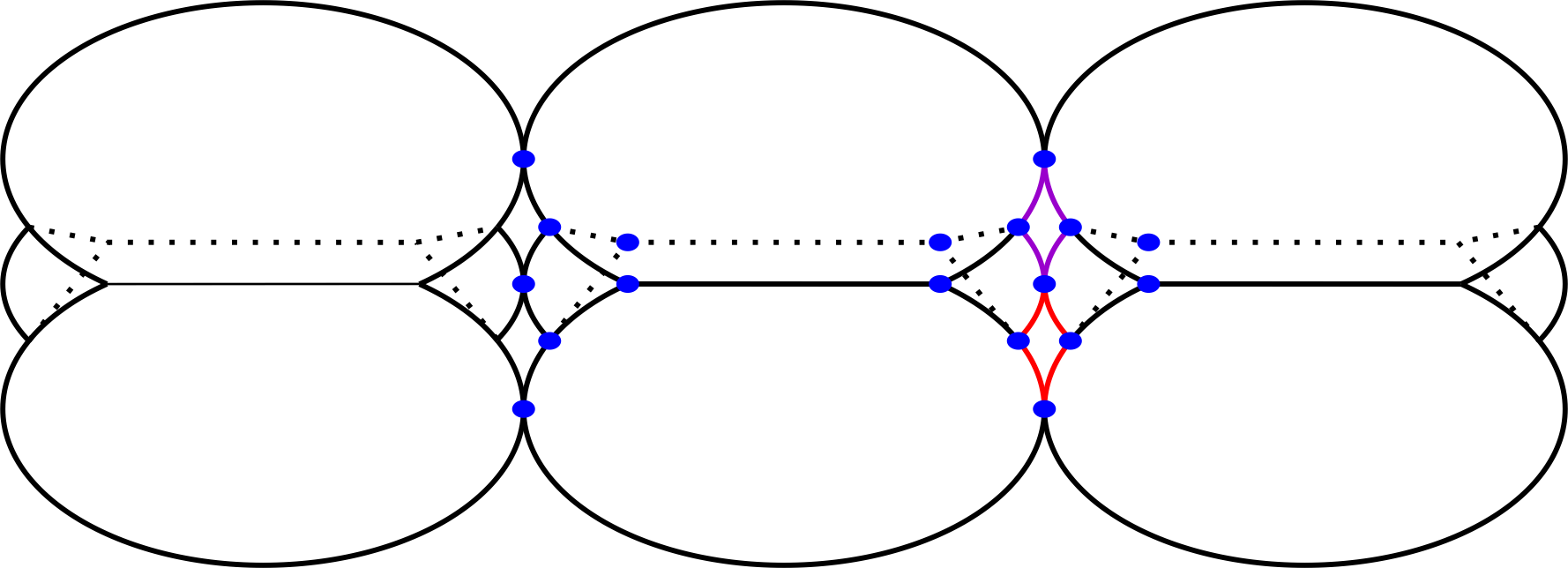}};

		\node at (0.2,2.0){\large $B$};
		\node at (0.2,-2.0){\large $B^{-1}$};
		
		\node at (3.8,-2.0){\large $ABA^{-1}$};
		\node at (3.8,2.0){\large $AB^{-1}A^{-1}$};
		
		\node at (-3.8,-2.0){\large $A^{-1}BA$};
		\node at (-3.8,2.0){\large $A^{-1}B^{-1}A$};

		\node at (2.23,1.42){\tiny $x_1$};
		\node at (2.2,-0.08){\tiny $x_2$};
		\node at (2.23,-1.53){\tiny $x_3$};
		
		\node at (-1.05,-0.25){\tiny $w_4$};
		\node at (-1.05,0.62){\tiny $w_3$};
		\node at (1.05,-0.25){\tiny $w_1$};
		\node at (1.05,0.62){\tiny $w_2$};
		
		\node at (-2.58,1.42){\tiny $A^{-1}(x_3)$};
		\node at (-2.57,-0.00){\tiny $A^{-1}(x_2)$};
		\node at (-2.58,-1.53){\tiny $A^{-1}(x_1)$};
		
		\node at (-1.65,0.05){\tiny $B^2$};
		\node at (1.65,0.05){\tiny $B^2$};

		\draw[->] (1.82,-0.65)--(1.82,-3);
		\node at (1.92,-3.15){\tiny $E_{B^{-1}}$};
        \draw[->] (1.82,0.55)--(1.82,2.94);
		\node at (1.92,3.02){\tiny $E_{B}$};

  \draw[->] (5.6,0)--(5.6,-3);
  \draw[->] (2.26,-0.23)--(5.5,-3);
		\node at (5.6,-3.15){\tiny $AB^2A^{-1}$};

  \draw[->] (-5.6,0)--(-5.6,-3);
  \draw[->] (-2.26,-0.23)--(-5.5,-3);
		\node at (-5.6,-3.15){\tiny $A^{-1}B^2A$};

		\node at (3.05,-0.25){\tiny $A(w_3)$};
		\node at (3.05,0.62){\tiny $A(w_4)$};
		\end{tikzpicture}
	\end{center}
	\caption{An abstract picture of the Ford domain of $\Sigma$.  For example, the sphere labeled by $B$ is the spinal sphere $\partial_{\infty}I(B)$, in the notation of Proposition \ref{prop:pair-disjoint4ppp}, it is $\partial_{\infty}I^{+}_0$. }
    \label{figure:4pppfordabstract}
\end{figure}


\begin{defn} The infinite polyhedron $D_{\Sigma}$ is the intersection of the exteriors of all the isometric spheres in $\{I_{k}^{+}, I_{k}^{-},I^{\star}_{k}: k\in \mathbb{Z}\}$. That is,
	$$D_{\Sigma}=\{p\in \hc: |\langle \mathbf{p}, \mathbf{q}_{\infty}\rangle| \leq |\langle \mathbf{p}, A^{k}B^i(\mathbf{q}_{\infty})\rangle|\text{, } \forall k\in \mathbb{Z} \ \text{and} \ i= 1, 2, 3\}.$$
\end{defn}

We shall use a version of the Poincar\'{e} polyhedron theorem for coset decompositions because $D_{\Sigma}$ is stabilized by the cyclic subgroup $\langle A\rangle$. 
Refer to \cite{dpp, ParkerWill:2017} for the precise statement of this version of the Poincar\'{e} polyhedron theorem. 
We proceed to show:

\begin{thm}\label{thm:fundamental-domain4ppp}
	The polyhedron $D_{\Sigma}$ is a fundamental domain for the cosets of $\langle A \rangle$ in $\Sigma=\langle A, B \rangle$. Moreover, the group $\Sigma$  is discrete and  has the presentation
	\begin{equation*}
	\langle  A, B: B^4=id\rangle.
	\end{equation*}
\end{thm}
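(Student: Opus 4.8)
The plan is to apply the version of the Poincar\'e polyhedron theorem for coset decompositions announced just before the statement, treating the unipotent $\langle A\rangle$ as the facet stabilizer and $D_{\Sigma}$ as a candidate fundamental domain for the cosets $\langle A\rangle\backslash\Sigma$. The guiding picture is that the sides of $D_{\Sigma}$ come in the three $\langle A\rangle$-orbits $\{I_k^+\},\{I_k^-\},\{I_k^\star\}$, and that, modulo the $A$-action, only three side-pairing maps are needed: $B$ carrying the side on $I_0^+=I(B)$ onto the side on $I_0^-=I(B^{-1})$, its inverse $B^{-1}$, and $B^2$ carrying the side on $I_0^\star=I(B^2)$ onto itself. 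Since $B^4=\mathrm{id}$ forces $I(B^{-2})=I(B^2)$, the map $B^2$ is an involutive self-pairing of $I_0^\star$, exchanging its two halves.

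First I would pin down the exact combinatorics of $\partial D_{\Sigma}$ from Proposition \ref{prop:pair-disjoint4ppp}. That proposition already records which of the spheres $I_k^+,I_k^-,I_k^\star$ are disjoint and which are tangent; what remains is to describe the genuinely intersecting pairs and the resulting ridges. Concretely, I would verify that $I_0^+$ meets only $I_0^\star$, along a Giraud disk whose boundary passes through the points $w_1,w_2$ of Figure \ref{figure:4pppfordabstract}, and is otherwise tangent to $I_1^-$ at $x_1$; the situation for $I_0^-$ and $I_0^\star$ follows by the $A$-symmetry together with items (vi)--(viii). By Proposition \ref{prop:Giraud} each such intersection is a smooth disk, so every ridge is a genuine codimension-two face, and this produces the complete list of ridges organized into $\langle A\rangle$-orbits.

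Next I would check the side-pairing hypotheses and run the ridge-cycle analysis. That $B$ sends the side on $I_0^+$ isometrically to the side on $I_0^-$, carrying the exterior to the interior appropriately, is item (i) of the isometric-sphere proposition applied to $g=B$; the self-pairing of $I_0^\star$ by $B^2$ is handled likewise with $g=B^2$. For each ridge I then trace the cycle of side-pairing maps around it and verify that the cycle transformation is trivial (or a finite-order elliptic dictated by the local dihedral angle) and that the angles sum correctly. The cycle of the self-paired ridges of $I_0^\star$ yields exactly $(B^2)^2=B^4=\mathrm{id}$, while the ridges $I_0^+\cap I_0^\star$ and their $A$-translates yield cycle relations that are consequences of $B^4=\mathrm{id}$ and of the evident $\langle A\rangle$-equivariance, so no further relation survives.

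The main obstacle will be this ridge-cycle verification at the Giraud disks: in complex hyperbolic space the local tessellation around a codimension-two face is subtler than in the real hyperbolic case, and one must confirm both that consecutive images of $D_{\Sigma}$ fit together without overlap and that the tangency points $x_1,x_2,x_3$ together with $q_\infty$ behave as cusps rather than producing hidden identifications. I would therefore treat the completeness condition separately, exhibiting $\Sigma_{q_\infty}=\langle A\rangle$-, $\Sigma_{x_1}$-, $\Sigma_{x_2}$-, $\Sigma_{x_3}$-invariant horoball neighborhoods that are consistently glued by the side-pairings; the tangencies in Proposition \ref{prop:pair-disjoint4ppp}(vi)--(viii) are precisely what make these cusp cross-sections close up, with stabilizers generated by the parabolics $AB$, $AB^2$, $AB^{-1}$, which are words in $A$ and $B$ and so contribute no new relations. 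Once the ridge cycles and completeness are established, the coset Poincar\'e polyhedron theorem simultaneously shows that $D_{\Sigma}$ is a fundamental domain for $\langle A\rangle\backslash\Sigma$, that $\Sigma$ is discrete, and that $\Sigma$ is generated by $A$ and $B$ subject only to $B^4=\mathrm{id}$, as claimed.
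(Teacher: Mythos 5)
Your overall strategy --- the coset Poincar\'e polyhedron theorem for $\langle A\rangle\backslash\Sigma$ with the three side-pairings $B$, $B^{-1}$, $B^{2}$, a ridge-cycle analysis producing $B^{4}=\mathrm{id}$, and consistent horoballs at the parabolic points $x_1,x_2,x_3$ stabilized by the unipotents $AB$, $AB^{-1}$, $AB^{2}$ --- is exactly the paper's. However, your combinatorial model of $\partial D_{\Sigma}$ is wrong at the point the whole argument hinges on: you assert that $I_0^{+}$ meets only $I_0^{\star}$ and is otherwise merely tangent to $I_1^{-}$. In fact $I_0^{+}\cap I_0^{-}$ is also a nonempty Giraud disk: the fixed point of $B$ lies on all three of $I_0^{+}$, $I_0^{-}$, $I_0^{\star}$, and Proposition \ref{prop:4pppGirauddisk} shows that each of the three pairwise intersections is a Giraud disk, with triple intersection a pair of geodesics through that fixed point. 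Consequently each side carries ridges coming from \emph{both} of the other two spheres, each ridge being a union of two sectors rather than a single disk.

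This changes the ridge cycles. There is no ridge of $s_0^{\star}$ self-paired by $B^{2}$ yielding $(B^{2})^{2}$: by Proposition \ref{prop:sideparing4pp}, $B^{2}$ \emph{exchanges} the two ridges $s_0^{+}\cap s_0^{\star}$ and $s_0^{-}\cap s_0^{\star}$, and the three ridge orbits $s_0^{+}\cap s_0^{-}$, $s_0^{+}\cap s_0^{\star}$, $s_0^{-}\cap s_0^{\star}$ form a single cycle whose cycle transformation is the composite $B^{2}\circ B\circ B=B^{4}$. That one cycle is where the relation $B^{4}=\mathrm{id}$ actually comes from, and it is there that one must check that $D_{\Sigma}\cup B^{2}(D_{\Sigma})\cup B^{-1}(D_{\Sigma})$ tiles a neighborhood of the ridge. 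As written, your verification would be carried out on the wrong polyhedron; once the missing ridge $s_0^{+}\cap s_0^{-}$ is restored, the rest of your plan (local tessellation across the sides, completeness via unipotent cusp stabilizers at $x_1$, $x_2$, $x_3$) goes through exactly as in the paper.
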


We denote $I_{k}^{+} \cap D_{\Sigma}$ by $s_{k}^{+}$, denote $I_{k}^{-} \cap D_{\Sigma}$ by $s_{k}^{-}$ and denote $I^{\star}_{k} \cap D_{\Sigma}$ by $s^{\star}_{k}$ for any $k \in \mathbb{Z}$. 
We will show that they are $3$-faces of $D_{\Sigma}$.
Let
\begin{equation*}
w_1=\left[\begin{matrix}
{-\frac{1+2\sqrt{2}i}{6}} \\ 
{-\frac{\sqrt{2}+i}{3}}\\ 
1  \end{matrix}\right], 
w_2=\left[\begin{matrix}
{-\frac{1-2\sqrt{2}i}{6}} \\ 
{-\frac{\sqrt{2}-i}{3}}\\ 
1\end{matrix}\right],
w_3=\left[\begin{matrix}
{-\frac{1-2\sqrt{2}i}{6}} \\ 
{\frac{\sqrt{2}-i}{3}}\\ 
1  \end{matrix}\right]
\ \text{and}\
w_4=\left[\begin{matrix}
{-\frac{1+2\sqrt{2}i}{6}} \\ 
{\frac{\sqrt{2}+i}{3}} \\  
1 \end{matrix}\right]
\end{equation*}
be four points. 
Moreover, it can be checked directly 
$$B(w_1)=w_4, \quad B(w_4)=w_3,  \quad B(w_3)=w_2, \quad \text{and}\quad B(w_2)=w_1.$$

\begin{prop}\label{prop:4pppGirauddisk}
For the isometric spheres  $I_{0}^{+}$, $I_{0}^{-}$ and $I^{\star}_{0}$:
\begin{enumerate}[(i)]
\item Each of $I_{0}^{+} \cap I^{-}_{0}$,  $I_{0}^{+} \cap I^{\star}_{0}$ and  $I_{0}^{-} \cap I^{\star}_{0}$ is a Giraud disk.
\item The triple intersection $I_{0}^{+} \cap I^{-}_{0} \cap I^{\star}_{0}$ is a union of two geodesics crossed at the fixed point of $B$, whose four endpoints are on $ \partial {\bf H}^2_{\mathbb C}$.
\item The four points $w_1$, $w_2$, $w_3$ and $w_4$ are exactly the set $$I^{+}_0 \cap I^{-}_0 \cap I^{\star}_0 \cap \partial  {\bf H}^2_{\mathbb C}.$$
\end{enumerate}
\end{prop}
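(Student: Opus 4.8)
The plan is to exploit the fixed point of $B$ together with the explicit Cygan equations of the three isometric spheres, organised by the order-four symmetry coming from $B$ itself. First I would locate $p_0$, the fixed point of the order-four elliptic $B$ in $\hc$; a direct check gives $p_0=(0,0,1)$ in horospherical coordinates (lift $\mathbf p_0=(-\tfrac12,0,1)^T$). Since $B$ is unitary and $B(p_0)=p_0$, for each $i\in\{1,-1,2\}$ one has $\langle \mathbf p_0, B^{-i}(\mathbf q_\infty)\rangle=\langle B^{i}\mathbf p_0,\mathbf q_\infty\rangle=\langle \mathbf p_0,\mathbf q_\infty\rangle$, so $p_0$ lies on each of $I_0^+=I(B)$, $I_0^-=I(B^{-1})$ and $I^{\star}_0=I(B^2)$. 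In particular the three pairwise intersections are non-empty; as $p_0$ is an interior point of $\hc$ the spheres meet transversally there (the tangencies recorded in Proposition \ref{prop:pair-disjoint4ppp} occur only at boundary points and for different indices), so Proposition \ref{prop:Giraud} applies and each pairwise intersection is a Giraud disk. This proves (i).

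For (ii) I would pass to the Cygan equations in horospherical coordinates $(x+y\mathrm i,t,u)$, exactly as in the proof of Proposition \ref{prop:pair-disjoint4ppp}; writing $Q=x^2+y^2+u$ they read $(Q-2y+1)^2+(t-2x)^2=4$, $(Q+2y+1)^2+(t+2x)^2=4$ and $Q^2+t^2=1$. Subtracting the first two gives the relation $y(Q+1)+tx=0$, while adding them and substituting $Q^2+t^2=1$ collapses the quartic terms to the single equation $3(x^2+y^2)+u=1$. Hence $Q=1-2(x^2+y^2)$ and $t^2=4(x^2+y^2)\bigl(1-(x^2+y^2)\bigr)$, and eliminating $t$ from $y(Q+1)+tx=0$ forces $y=\pm(x^2+y^2)$. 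Thus the triple intersection projects to the two circles $x^2+(y\mp\tfrac12)^2=\tfrac14$ in the $z$-plane and consists of two arcs meeting only at $p_0$ (where $x=y=0$, $u=1$). The order-four structure makes this transparent: $B$ cyclically permutes the four Cygan centres $\mathbf q_\infty$, $(0,0,0)$, $(\mathrm i,0,0)$, $(-\mathrm i,0,0)$, so the triple intersection, being the locus equidistant from all four, is $B$-invariant with $p_0$ fixed.

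To finish (ii) I would identify each arc as a genuine geodesic using two symmetries. Complex conjugation $\iota$ swaps $I_0^+\leftrightarrow I_0^-$ while fixing $I^{\star}_0$ and $\mathbf q_\infty$, hence preserves the triple intersection and interchanges the two arcs; and $B^2$, which one computes to have eigenvalues $1,-1,-1$ and so to be the complex reflection in a complex line through $p_0$, preserves each arc and swaps its two endpoints. An arc through $p_0$ that is $B^2$-invariant with its ends exchanged is the geodesic determined by those ends, which I would confirm by checking that the explicit parametrisation over each circle agrees with the standard geodesic $[\lambda\mathbf a+\mathbf b]$ joining the two null endpoints. Finally, for (iii) I set $u=0$: the reduction above then forces $x^2+y^2=\tfrac13$, $y=\pm\tfrac13$, $x=\pm\tfrac{\sqrt2}{3}$, with the sign of $t=\pm\tfrac{2\sqrt2}{3}$ pinned down by $y(Q+1)+tx=0$. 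These four boundary solutions are precisely $w_1,w_2,w_3,w_4$, and they form a single $B$-orbit, matching the cyclic action $B(w_1)=w_4$, $B(w_4)=w_3$, $B(w_3)=w_2$, $B(w_2)=w_1$ recorded before the proposition.

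The main obstacle is the geometric step in (ii): proving that the one-dimensional locus is exactly two geodesics rather than two arbitrary arcs, and that no spurious branch is introduced or lost when taking square roots during the elimination. The symmetries $\iota$ and $B^2$ organise the configuration, but the clean identification of each arc with the geodesic through its endpoints is where the real work lies; one must also keep track of the constraint $u\ge 0$, which confines the arcs to the disk $x^2+y^2\le\tfrac13$ and explains why each of the two circles contributes a single arc.
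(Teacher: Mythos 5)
Your part (i) is essentially the paper's argument: locate the fixed point $\mathbf{p}_B=(-\tfrac12,0,1)^T$ of $B$, check that it lies on all three isometric spheres, and invoke Proposition \ref{prop:Giraud}. For (ii) and (iii) you take a genuinely different computational route: you eliminate directly from the three Cygan equations in horospherical coordinates, whereas the paper parametrises $I^{\star}_0$ by geographic coordinates $(\alpha,\beta,w)$ and reduces the triple intersection to $\beta=0$, $w^2=\tfrac{1-\cos\alpha}{2}$. Your elimination is correct (the relations $y(Q+1)+tx=0$, $3(x^2+y^2)+u=1$ and $y=\pm(x^2+y^2)$ all check out), and your $u=0$ specialisation does recover exactly the four points $w_1,\dots,w_4$ with the right signs of $t$. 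What the paper's coordinates buy is the geodesic statement: the locus $\beta=0$ sits inside a single meridian of $I^{\star}_0$, i.e.\ a Lagrangian plane, and the two smooth branches $w=\pm\sin(\alpha/2)$ are then identified as geodesics of that copy of $\mathbf{H}^2_{\mathbb R}$ through $\mathbf{p}_B$. In your coordinates this identification still has to be done by hand, and you rightly flag it as the remaining work, deferring to a comparison with the parametrisation $[\lambda\mathbf{a}+\mathbf{b}]$.

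One step of your symmetry argument is wrong and cannot be used as a substitute for that explicit check: $B^2$ is \emph{not} a complex reflection in a complex line. A direct computation shows $B^2$ is the anti-diagonal matrix with entries $-\tfrac12,-1,-2$ (so $\mathrm{tr}(B^2)=-1$ and the eigenvalues are $1,-1,-1$), but its $+1$-eigenvector is $(1,0,-2)^T\sim\mathbf{p}_B$, which is a \emph{negative} vector, and the $(-1)$-eigenspace $\mathbf{p}_B^{\perp}$ is positive definite. Hence $B^2$ is the central involution at $\mathbf{p}_B$, with differential $-\mathrm{id}$ there and no fixed complex line in $\hc$. For such an involution \emph{every} arc through $\mathbf{p}_B$ symmetric under it has its endpoints exchanged, so the implication ``$B^2$-invariant arc with swapped ends $\Rightarrow$ geodesic'' is false, and the burden of proving geodesy falls entirely on the explicit parametrisation you postponed. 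Everything else --- the transversality remark in (i), the use of $\iota$, the bookkeeping of $u\ge0$ confining each circle $x^2+(y\mp\tfrac12)^2=\tfrac14$ to a single arc, and the identification of the boundary solutions --- is sound and consistent with the paper's conclusions.
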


\begin{proof}
First note that the fixed point of $B$ in ${\bf H}^2_{\mathbb C}$ is 
\begin{equation*}\label{eq-fixB}
\mathbf{p}_{B}=\left[\begin{matrix}
-1/2 \\ 0 \\  1 \end{matrix}\right].
\end{equation*}
    It can be  checked directly that $\mathbf{p}_{B}$ lies in $I^{+}_0$, $I^{-}_0$ and $I^{\star}_0$. Moreover, $q_{\infty}$, $B(q_{\infty})$ and $B^2(q_{\infty})$ are linearly independent. By Proposition \ref{prop:Giraud}, $I_{0}^{-} \cap I^{\star}_{0}$ is a non-empty Giraud disk. 
Similarly, each of $I_{0}^{+} \cap I^{-}_{0}$ and $I_{0}^{+} \cap I^{\star}_{0}$ is a non-empty Giraud disk.
			
    We now consider the triple intersection $I_{0}^{+}\cap I^{-}_{0}\cap I^{\star}_{0}$. 
    From (\ref{eq:geog-coor}), the geographic coordinates $(\alpha,\beta,w)$ of points in $I^{\star}_0$
    is given by 
		\begin{equation}
		{\bf{q}}={\bf q}(\alpha,\beta,w)=\left[
		\begin{array}{c}
		-e^{-i\alpha}/2 \\
		we^{i(-\alpha/2+\beta)} \\
		1 \\
		\end{array}
		\right],
		\end{equation}
		where $\alpha\in [-\pi/2,\pi/2]$, $\beta\in [0, \pi)$ and $w\in [-\sqrt{\cos(\alpha)},\sqrt{\cos(\alpha)}]$.
    Because ${\bf{q}}\in I^{\star}_0$, then $$|\langle {\bf{q}}, (q_{\infty}) \rangle |^2 =|\langle {\bf{q}},  B^{-2} (q_{\infty}) \rangle |^2=1.$$ 
    We calculate  
    $|\langle {\bf{q}}, B^{-1} (q_{\infty}) \rangle |^2 $ to be 
		\begin{flalign} \label{equation:I0starI0plus}
	 \begin{aligned}
	w^2+\frac{1+\cos(\alpha)}{2}+w\left(\sin\left(\frac{\alpha-2 \beta}{2}\right)-\sin\left(\frac{\alpha+2 \beta}{2}\right)\right),
		\end{aligned}
		\end{flalign}
    and  
    $|\langle {\bf{q}}, B (q_{\infty}) \rangle |^2 $
    to be 
			\begin{flalign} \label{equation:I0starI0minus}
		\begin{aligned}
		w^2+\frac{1+\cos(\alpha)}{2}-w\left(\sin\left(\frac{\alpha-2 \beta}{2}\right)-\sin\left(\frac{\alpha+2 \beta}{2}\right)\right).
		\end{aligned}
		\end{flalign}
    Then the triple intersection $I_{0}^{+} \cap I^{-}_{0} \cap I^{\star}_{0}$ is given by 
	 \begin{flalign} \label{equation:I0starI0minusI0plus}
	\left\{ \begin{aligned}
	w^2+\frac{1+\cos(\alpha)}{2}=1; \qquad& \  \\ 
		w\left(\sin\left(\frac{\alpha-2 \beta}{2}\right)-\sin\left(\frac{\alpha+2 \beta}{2}\right)\right)=0. \qquad& \  
	\end{aligned}
	\right.
	\end{flalign}
    The solutions are $$\alpha=\pi, \quad \text{or}\quad w=0, \quad \text{or}\quad \beta=0.$$
	Since $\alpha\in [-\pi/2,\pi/2]$, we know that $\alpha=\pi$ is not a solution of  our equation. 
	If $w=0$, then $\alpha= 0$, which corresponds to the point $\mathbf{p}_{B}$. 
    If $\beta=0$, the solutions lie in a meridian of $I^{\star}_0$. 
    In this case, the set ${\bf{q}}(\alpha,0,w)$ is a union of two geodesics which are crossed at $\mathbf{p}_{B}$, 
    where $w = \pm \sqrt{\frac{1-\cos(\alpha)}{2}}$.  

   Moreover, in the case of $\beta=0$, we consider the intersection of $I_{0}^{+}\cap I^{-}_{0}\cap I^{\star}_{0}$ with $\partial{\bf H}^2_{\mathbb C}$.
    Note that $|\langle {\bf{q}}, {\bf{q}} \rangle |^2 =w^2-\cos(\alpha)$.
    If $\bf{q}\in\partial{\bf H}^2_{\mathbb C}$,
    we have $w=\pm \sqrt{\cos(\alpha)}$.
    Hence the equation $\pm \sqrt{\cos(\alpha)}= \pm \sqrt{\frac{1-\cos(\alpha)}{2}}$  
    gives the intersection point.
    Specifically, $$\alpha= \pm \arccos(\frac{1}{3}),~~\beta=0,~~~ w= \pm \frac{1}{3^{1/2}}.$$ 	
  The coordinates of the points $w_i$ for $i=1, 2, 3, 4$ are available.
\end{proof}

From Proposition \ref{prop:4pppGirauddisk}, we have the following three propositions on the combinatorics of 3-faces $s_{0}^{+}$,  $s_{0}^{-}$ and $s_{0}^{\star}$.
The readers are referred to Figure \ref{figure:3side4ppp} for them. 
\begin{prop}\label{prop:4ppp3face0plus}
For the 3-face  $s_{0}^{+}$ of $D_{\Sigma}$:
\begin{enumerate}[(i)]
    \item $s_{0}^{+} \cap s_{0}^{-}$ is topologically the union of two sectors.
    \item  $s_{0}^{+} \cap s^{\star}_{0}$ is topologically the union of two sectors.
    \item  $\partial s_{0}^{+} \cap \partial {\bf H}^2_{\mathbb C}$ is a disk.
    \item $s_{0}^{+}$ is a 3-ball in $\overline{\hc}$, and $\partial s_{0}^{+}$ is the union of $\partial s_{0}^{+} \cap \partial {\bf H}^2_{\mathbb C}$,  $s_{0}^{+} \cap s_{0}^{-}$ and  $s_{0}^{+} \cap s^{\star}_{0}$. 
\end{enumerate}	
\end{prop}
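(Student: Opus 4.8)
The plan is to realise $s_0^+=I_0^+\cap D_\Sigma$ as what survives of the topological $3$-ball $I_0^+$ after deleting the interiors of the finitely many isometric spheres that actually reach it, and then to read off the four assertions from the Giraud-disk picture of Proposition \ref{prop:4pppGirauddisk}. First I would assemble the intersection data: by Proposition \ref{prop:center-radius4ppp} and the Cygan-distance disjointness criterion one checks that the same-type neighbours $I_{\pm1}^+$ and the $B^2$-spheres $I_k^{\star}$ ($k\neq0$) stay disjoint from $I_0^+$, while Proposition \ref{prop:pair-disjoint4ppp}(i),(iii),(v) dispose of the remaining far spheres, and (vi)--(vii) together with their $A$-translates show that the only tangencies are with $I_1^-$ and $I_{-1}^-$, at the single ideal points $x_1$ and $A^{-1}(x_3)$. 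Hence the only spheres cutting $I_0^+$ along a codimension-one piece inside $\hc$ are $I_0^-$ and $I_0^{\star}$, and by Proposition \ref{prop:4pppGirauddisk}(i) the two traces $G_{+-}=I_0^+\cap I_0^-$ and $G_{+\star}=I_0^+\cap I_0^{\star}$ are Giraud disks.

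For (i) I would work inside $G_{+-}$. The face $s_0^+\cap s_0^-$ consists of the points of $G_{+-}$ lying in the exterior of every other isometric sphere, and by the previous paragraph the only sphere whose wall crosses the interior of $G_{+-}$ is $I_0^{\star}$; by Proposition \ref{prop:4pppGirauddisk}(ii) the trace $G_{+-}\cap I_0^{\star}=I_0^+\cap I_0^-\cap I_0^{\star}$ is a pair of geodesics crossing transversally at $\mathbf{p}_B$, which split the disk $G_{+-}$ into four sectors based at $\mathbf{p}_B$. The exterior condition for $I_0^{\star}$ is the sign of $|\langle \mathbf{p},\mathbf{q}_\infty\rangle|-|\langle \mathbf{p},B^2\mathbf{q}_\infty\rangle|$, which vanishes exactly on those two geodesics and changes sign across each of them; travelling once around $\mathbf{p}_B$ one crosses the four rays in turn, so the sign alternates and exactly two opposite sectors lie in the exterior of $I_0^{\star}$. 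This gives (i), and (ii) is the identical argument with $I_0^-$ and $I_0^{\star}$ interchanged, using that $\mathbf{p}_B$ and the triple intersection are shared by both Giraud disks.

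For (iii) I would pass to the spinal sphere. Since $\partial s_0^+\cap\partial\hc=\partial_\infty I_0^+\cap D_\Sigma$ is the part of the $2$-sphere $\partial_\infty I_0^+$ outside the two spinal spheres $\partial_\infty I_0^-$ and $\partial_\infty I_0^{\star}$ (the tangent spheres only contribute the isolated ideal points $x_1$ and $A^{-1}(x_3)$), the region is cut out by the two circles $C_1=\partial_\infty I_0^+\cap\partial_\infty I_0^-$ and $C_2=\partial_\infty I_0^+\cap\partial_\infty I_0^{\star}$, which are the ideal boundaries of $G_{+-}$ and $G_{+\star}$ and meet exactly in the four points $w_1,w_2,w_3,w_4$ by Proposition \ref{prop:4pppGirauddisk}(iii). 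After normalising $I_0^+$ to standard position I would parametrise $\partial_\infty I_0^+$ by geographic coordinates and track the two exterior inequalities, showing that the locus satisfying both is a single cell, hence a disk. Assertion (iv) then follows by assembly: $I_0^+$ is a $3$-ball, the properly embedded disks $G_{+-}$ and $G_{+\star}$ meet only along the two triple-intersection geodesics through $\mathbf{p}_B$, and cutting the ball along $G_{+-}\cup G_{+\star}$ leaves $s_0^+$ a $3$-ball whose frontier decomposes exactly as $(\partial s_0^+\cap\partial\hc)\cup(s_0^+\cap s_0^-)\cup(s_0^+\cap s_0^{\star})$.

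The main obstacle is (iii). Two circles meeting at four points cut a $2$-sphere into six regions, and it is not clear from the combinatorics alone that the doubly-exterior region is connected, let alone a disk rather than an annulus or some other planar piece; this must be settled by the explicit inequalities in geographic coordinates, exactly as the $\beta=0$ analysis singled out the geodesics in Proposition \ref{prop:4pppGirauddisk}. A secondary point, best disposed of at the very start, is the disjointness of the same-type neighbours $I_{\pm1}^+$ from $I_0^+$, since this is what guarantees that no unexpected fourth face appears and that the frontier in (iv) has precisely the three stated pieces.
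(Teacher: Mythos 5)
Your proposal follows essentially the same strategy as the paper's proof: reduce to the fact that $I_0^{-}$ and $I_0^{\star}$ are the only isometric spheres meeting $I_0^{+}$ in a codimension-one piece, read (i)--(ii) off the Giraud-disk and crossing-geodesics picture of Proposition \ref{prop:4pppGirauddisk}, and assemble. Your sign-alternation argument for selecting the two opposite sectors is a correct and more explicit version of what the paper merely asserts, and your intersection bookkeeping is in fact more accurate than the paper's own proof, which misattributes the tangencies to $I_{\pm1}^{+}$ instead of to $I_{1}^{-}$ (at $x_1$) and $I_{-1}^{-}$ (at $A^{-1}(x_3)$). Two caveats. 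First, the Cygan-distance test for $I_0^{+}$ versus $I_{\pm1}^{+}$ is more delicate than you suggest: with the centres exactly as printed in Proposition \ref{prop:center-radius4ppp} the distance between the centres of $I_0^{+}$ and $I_1^{+}$ comes out to $2<2\sqrt{2}$ and the test fails; the $t$-coordinates there carry the wrong sign (the centre of $I_k^{+}$ is $(-2k+\mathrm{i},4k)$, as one sees by applying $A^k$ to $(\mathrm{i},0)$ via (\ref{eq-A})), and with the corrected centres the distance is $80^{1/4}\approx 2.99>2\sqrt{2}$, so disjointness does follow --- but you must actually recompute the centres rather than quote the proposition as printed. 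Second, for (iii) you propose an explicit geographic-coordinate computation and flag it as the main obstacle; the paper instead deduces (iii) from (i)--(ii) by a frontier argument: the four surviving sectors glue along the two geodesics through $\mathbf{p}_B$ into a single disk in $\overline{\hc}$ whose boundary is a circle on the spinal sphere $\partial_{\infty}I_0^{+}$ through $w_1,w_2,w_3,w_4$, and since that circle is precisely the topological frontier of $\partial s_0^{+}\cap\partial\mathbf{H}^2_{\mathbb C}$ inside the $2$-sphere, the latter must be one of the two complementary disks. This disposes of the connectedness worry you raise without further computation, and you may wish to adopt it in place of the inequality chase.
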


\begin{proof}The side $s_{0}^{+}$ is contained in  the isometric sphere $I_{0}^{+}$. Besides $I_{0}^{+}$ is tangent to $I_{-1}^{+}$ and  $I_{1}^{+}$ at a parabolic fixed point respectively, the isometric sphere $I_{0}^{+}$ doesn't intersect with all the other isometric spheres except $I_{0}^{-}$ and $I_{0}^{\star}$. By Proposition \ref{prop:4pppGirauddisk}, we have both  $I_{0}^{+} \cap I_{0}^{-}$   and 
 $I_{0}^{+} \cap I^{\star}_{0}$ are  topologically the union of two sectors.  We also have that $I_{0}^{+} \cap I_{0}^{-}$  and 
 $I_{0}^{+} \cap I^{\star}_{0}$ are glued together along  $I_{k}^{+}\cap I_{k}^{-} \cap I^{\star}_0$ to a disk in the boundary of  $s_{0}^{+}$. The boundary of this disk bounds a disk in $I^{+}_0 \cap {\bf H}^2_{\mathbb C}$. This  together co-bound a 3-ball in $I^{+}_0$, which is $s^{+}_0$. 
 \end{proof}
		
The proof of Proposition \ref{prop:4ppp3face0minus} is similar, we omit it.
\begin{prop}\label{prop:4ppp3face0minus}
For the 3-face $s_{0}^{+}$ of $D_{\Sigma}$:
\begin{enumerate}[(i)]
    \item $s_{0}^{-} \cap s_{0}^{+}$ is topologically the union of two sectors.	
    \item  $s_{0}^{-} \cap s^{\star}_{0}$ is topologically the union of two sectors.
    \item  $\partial s_{0}^{-} \cap \partial {\bf H}^2_{\mathbb C}$ is a disk.
    \item $s_{0}^{-}$ is a 3-ball in $\overline{\hc}$, and $\partial s_{0}^{-}$ is the union of $\partial s_{0}^{-} \cap \partial {\bf H}^2_{\mathbb C}$, $s_{0}^{-} \cap s_{0}^{+}$ and  $s_{0}^{-} \cap s^{\star}_{0}$.
\end{enumerate}
\end{prop}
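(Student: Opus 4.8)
The plan is to run the argument of Proposition \ref{prop:4ppp3face0plus} with the roles of $I_0^+$ and $I_0^-$ interchanged; the only additional care needed is that the tangency statements (vi)--(viii) of Proposition \ref{prop:pair-disjoint4ppp} are phrased for $I_0^+$ and must be transported to $I_0^-$ through the $A$-action.

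First I would determine which isometric spheres meet $I_0^-$ in a two-dimensional set. By Proposition \ref{prop:pair-disjoint4ppp}(ii), $I_0^-$ is disjoint from $I_k^-$ for $|k|\geq 2$, while it is tangent to $I_{\pm 1}^-$ at parabolic fixed points exactly as in the $+$ case. Applying $A^{-k}$ to part (iii) shows $I_0^-$ is disjoint from $I_k^+$ for all $k\neq 0$ except for two ideal tangencies: with $I_1^+$ at $x_3$ by part (vii), and with $I_{-1}^+$ at $A^{-1}(x_1)$ by pushing part (vi) forward under $A^{-1}$. Applying $A^{-k}$ to part (v) shows $I_0^-$ is disjoint from $I_k^\star$ for all $|k|\geq 1$. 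Hence the only spheres meeting $I_0^-$ in codimension one are $I_0^+$ and $I_0^\star$, and every other contact is a single ideal point.

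Next I would feed this into Proposition \ref{prop:4pppGirauddisk}. Its parts (i)--(ii) give that $I_0^+\cap I_0^-$ and $I_0^-\cap I_0^\star$ are Giraud disks and identify the triple intersection $I_0^+\cap I_0^-\cap I_0^\star$ as two geodesics crossing at $\mathbf{p}_B$. These two geodesics cut each of the Giraud disks into four sectors, of which exactly two lie in the exterior of the third sphere and therefore in $D_\Sigma$; this yields statements (i) and (ii). By part (iii), the four ideal endpoints of these arcs are $w_1,w_2,w_3,w_4$, so the four surviving sectors glue along the triple-intersection geodesics into a single embedded disk. Together with the ideal boundary piece $\partial s_0^-\cap\partial\mathbf{H}^2_{\mathbb C}$, which is a disk (statement (iii)), this forms the boundary $2$-sphere of the region cut out of the $3$-ball $I_0^-$ by the two embedded Giraud disks. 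That region is a $3$-ball, namely $s_0^-$, and its boundary decomposes as in statement (iv).

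The hard part is conceptual rather than computational, and it is exactly the point already dealt with in Proposition \ref{prop:4ppp3face0plus}: one must verify that the surviving sectors assemble along the four triple-intersection arcs into a single embedded disk, rather than a branched or disconnected surface. This is guaranteed by the first step, which ensures $I_0^-$ meets no sphere other than $I_0^+$ and $I_0^\star$ in codimension one, so that no further cutting of $I_0^-$ occurs. Once the $A$-equivariant translation of (vi)--(viii) is recorded, the topological conclusions are identical to those of the $+$ case, which is why the authors are justified in omitting the details.
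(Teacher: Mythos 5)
Your proposal is correct and is essentially the paper's own (omitted) proof: the authors simply state that the argument of Proposition \ref{prop:4ppp3face0plus} carries over, and what you write is exactly that argument with $I_0^+$ and $I_0^-$ interchanged, relying on Proposition \ref{prop:pair-disjoint4ppp} to rule out all other codimension-one contacts and on Proposition \ref{prop:4pppGirauddisk} to assemble the two Giraud disks and the ideal disk into the boundary $2$-sphere of the $3$-ball $s_0^-$. The only thing you add beyond the paper is the explicit $A$-equivariant transport of the tangency statements (vi)--(viii), which the paper leaves implicit; this is a welcome clarification rather than a deviation.
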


\begin{prop}\label{prop:4ppp3face0star}
For the 3-face  $s^{\star}_{0}$ of $D_{\Sigma}$:
\begin{enumerate}[(i)]
	\item $s^{\star}_0 \cap s_{0}^{+}$ is topologically the union of two sectors.
		
	\item  $s^{\star}_0 \cap s^{-}_{0}$ is topologically the union of two sectors.
		
	\item  $\partial s_{\star}^{-} \cap \partial {\bf H}^2_{\mathbb C}$ is the union of two disjoint disks.
			
	\item $s^{\star}_{0}$ is a topologically solid light cone in $\overline{\hc}$, and $\partial s_{0}^{+}$ is the union of $\partial s_{0}^{+} \cap \partial {\bf H}^2_{\mathbb C}$, $s^{\star}_0 \cap s_{0}^{-}$ and $s^{\star}_{0} \cap s_{0}^{+}$. 
	\end{enumerate}
\end{prop}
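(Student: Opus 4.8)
My plan is to realize $s^{\star}_{0}=I^{\star}_{0}\cap D_{\Sigma}$ as the part of the $3$-ball $I^{\star}_{0}$ lying in the exterior of both $I^{+}_{0}$ and $I^{-}_{0}$. By Proposition \ref{prop:pair-disjoint4ppp}, $I^{\star}_{0}$ is tangent to $I^{\star}_{1}$ at $x_{2}$ and to $I^{\star}_{-1}$ at $A^{-1}(x_{2})$ and is disjoint from every other isometric sphere, so the two tangencies contribute only the two boundary points $x_{2},A^{-1}(x_{2})$ and the only sets cutting $I^{\star}_{0}$ are the Giraud disks $D^{+}=I^{\star}_{0}\cap I^{+}_{0}$ and $D^{-}=I^{\star}_{0}\cap I^{-}_{0}$ of Proposition \ref{prop:4pppGirauddisk}(i). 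For parts (i) and (ii) I would argue on $D^{+}$: by Proposition \ref{prop:4pppGirauddisk}(ii) the triple intersection $D^{+}\cap D^{-}=I^{+}_{0}\cap I^{-}_{0}\cap I^{\star}_{0}$ is the two geodesics through $\mathbf{p}_{B}$ with endpoints $w_{1},w_{2},w_{3},w_{4}$ on $\partial D^{+}$, and these two crossing arcs cut $D^{+}$ into four sectors. Since $D^{+}\cap D^{-}=D^{+}\cap I^{-}_{0}$ is exactly the locus on $D^{+}$ where one passes between the interior and the exterior of $I^{-}_{0}$, the four sectors alternate inside/outside $I^{-}_{0}$, so the exterior ones form two opposite sectors; this is $s^{\star}_{0}\cap s^{+}_{0}$, giving (i), and (ii) follows by the identical argument on $D^{-}$.

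For (iii) I would work on the spinal sphere $\partial_{\infty}I^{\star}_{0}$ in the geographic coordinates $(\alpha,\beta,w)$ used in the proof of Proposition \ref{prop:4pppGirauddisk}, where $w=\pm\sqrt{\cos\alpha}$ and $|\langle\mathbf{q},\mathbf{q}_{\infty}\rangle|^{2}=1$. Substituting $w^{2}=\cos\alpha$ into (\ref{equation:I0starI0plus}) and (\ref{equation:I0starI0minus}), the condition of being exterior to both $I^{+}_{0}$ and $I^{-}_{0}$ becomes $\tfrac{1+3\cos\alpha}{2}\pm wS>1$, i.e. $|wS|<\tfrac{3\cos\alpha-1}{2}$, where $S=\sin\tfrac{\alpha-2\beta}{2}-\sin\tfrac{\alpha+2\beta}{2}=-2\cos\tfrac{\alpha}{2}\sin\beta$. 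This forces $\cos\alpha>1/3$, and since both signs of $w$ yield the same inequality it reduces to $\sin\beta<f(\alpha)$ with $f(\alpha)=\tfrac{3\cos\alpha-1}{4\sqrt{\cos\alpha}\,\cos(\alpha/2)}$. A short check gives $f(\alpha)\le f(0)=1/2<1$, so for every admissible $\alpha$ the condition $\sin\beta<f(\alpha)$ cuts out two $\beta$-arcs, one near $\beta=0$ and one near $\beta=\pi$. The decisive point is the identification $(\alpha,\pi,w)\sim(\alpha,0,-w)$ built into the geographic chart: it glues the $\beta\to\pi$ arc on the sheet $w>0$ to the $\beta=0$ arc on the sheet $w<0$ and vice versa, so the four pieces assemble into exactly two disjoint topological disks. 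Finally $x_{2}=(0,0,-1)$ and $A^{-1}(x_{2})=(0,0,+1)$ in these coordinates, and each lies in the interior of one of the two disks.

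For (iv) I would drop the restriction to $w=\pm\sqrt{\cos\alpha}$ and describe $s^{\star}_{0}$ throughout $I^{\star}_{0}$: a point lies in $s^{\star}_{0}$ iff $w^{2}-\sin^{2}\tfrac{\alpha}{2}\ge 2|w|\cos\tfrac{\alpha}{2}\sin\beta$, i.e. $|w|\ge\rho_{+}(\alpha,\beta)$ for the larger root $\rho_{+}$ of the corresponding quadratic in $|w|$. Then $w=0$ occurs only in the closure and only at $\alpha=0$, namely at the single point $\mathbf{p}_{B}$, which is precisely where $D^{+}$ and $D^{-}$ meet non-transversally along the crossing of the two geodesics. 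Hence the halves $w>0$ and $w<0$ of $s^{\star}_{0}$ meet only at $\mathbf{p}_{B}$; intersecting $s^{\star}_{0}$ with a small link sphere about $\mathbf{p}_{B}$ and rerunning the computation of (iii) on that sphere again produces two disks, so $s^{\star}_{0}$ is the cone from $\mathbf{p}_{B}$ over its two boundary disks, that is, two solid cones wedged at the apex $\mathbf{p}_{B}$ — a solid light cone. Its boundary then decomposes as the two disks of (iii) on $\partial\hc$ together with the two-sector pieces $s^{\star}_{0}\cap s^{+}_{0}$ and $s^{\star}_{0}\cap s^{-}_{0}$ of (i)--(ii), glued along the arcs of the triple intersection and along $\partial_{\infty}I^{\star}_{0}$.

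The hard part will be (iv). Because $D^{+}$ and $D^{-}$ meet \emph{non-transversally} at the interior point $\mathbf{p}_{B}$ (their intersection being two crossing arcs rather than one), the clean ``two disks co-bound a $3$-ball'' argument of Proposition \ref{prop:4ppp3face0plus} breaks down and $s^{\star}_{0}$ is genuinely singular at $\mathbf{p}_{B}$. Proving that $s^{\star}_{0}$ is \emph{exactly} two solid cones joined at $\mathbf{p}_{B}$, and not a space with more complicated local structure at the apex, is what forces the explicit coordinate description above; in particular it is the chart identification $(\alpha,\pi,w)\sim(\alpha,0,-w)$ that makes the count of two disks in (iii) and the cone structure at $\mathbf{p}_{B}$ come out correctly, and omitting it would spuriously multiply the number of components.
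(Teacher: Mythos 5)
Your parts (i)--(iii) are essentially correct. Parts (i)--(ii) take the same route as the paper: the two crossing geodesics of Proposition \ref{prop:4pppGirauddisk} cut each Giraud disk into four sectors that alternate between the interior and exterior of the other sphere. Your part (iii) is in fact more explicit than the paper's argument (which only asserts that the two boundary circles of the ``light cone'' bound disks in the spinal sphere): the reduction to $\sin\beta\le f(\alpha)$ with $f\le 1/2$, together with the chart identification $(\alpha,\pi,w)\sim(\alpha,0,-w)$, correctly assembles the four arcs into two disjoint disks, one containing $x_2$ and one containing $A^{-1}(x_2)$.

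The gap is in (iv), in the inference ``$s^{\star}_{0}\cap\{w=0\}=\{\mathbf{p}_{B}\}$, hence the halves $w>0$ and $w<0$ meet only at $\mathbf{p}_{B}$.'' The locus $\{w=0\}$ is the real spine of $I^{\star}_{0}$, a one-dimensional arc, so it cannot separate the three-dimensional $I^{\star}_{0}$. In the slice at fixed $\alpha$, parametrized by $\zeta=we^{i\beta}$ with $\beta\in[0,\pi)$, the sets $\{w>0\}$ and $\{w<0\}$ are the two half-disks $\arg\zeta\in[0,\pi)$ and $\arg\zeta\in[\pi,2\pi)$, whose closures share the entire diameter $\beta=0$, not just the center; and since $\rho_{+}(\alpha,0)=|\sin(\alpha/2)|\le\sqrt{\cos\alpha}$ whenever $\cos\alpha\ge 1/3$, the side $s^{\star}_{0}$ meets that diameter in a two-dimensional set. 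So your two ``halves'' are glued along a $2$-disk and are not the two cones. (This also contradicts your own part (iii): there you glue the $\beta$-near-$\pi$ arc of the sheet $w>0$ to the $\beta$-near-$0$ arc of the sheet $w<0$, so each boundary disk already straddles both signs of $w$, whereas splitting by the sign of $w$ would pair the arcs the other way.) The repair is to use the correct separating set, namely the meridian $\beta=\pi/2$ together with the real spine: since $\rho_{+}(\alpha,\pi/2)=1+\cos(\alpha/2)>\sqrt{\cos\alpha}$, the side $s^{\star}_{0}$ misses the meridian $\beta=\pi/2$ entirely, and it meets $\{w=0\}$ only at $\mathbf{p}_{B}$; hence $s^{\star}_{0}\setminus\{\mathbf{p}_{B}\}$ lies in the two components $\{w\cos\beta>0\}$ and $\{w\cos\beta<0\}$ of $I^{\star}_{0}\setminus(\{\beta=\pi/2\}\cup\{w=0\})$. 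These two pieces, each containing one of $x_{2}$, $A^{-1}(x_{2})$ and capped off by one of the disks of (iii), are the two solid cones; with this substitution your cone argument at $\mathbf{p}_{B}$ goes through and recovers the paper's conclusion.
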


\begin{proof}
The side $s_{0}^{\star}$ is contained in  the isometric sphere $I_{0}^{\star}$. Besides $I_{0}^{\star}$ is tangent to $I_{-1}^{\star}$ and  $I_{1}^{\star}$ at a parabolic fixed point respectively, the isometric sphere $I_{0}^{\star}$ doesn't intersect with all the other isometric spheres except $I_{0}^{+}$ and $I_{0}^{-}$. By Proposition \ref{prop:4pppGirauddisk}, we have both  $I_{0}^{\star} \cap I_{0}^{+}$   and 
$I_{0}^{\star} \cap I^{-}_{0}$ are  topologically the union of two sectors.  We also have that $I_{0}^{\star} \cap I_{0}^{+}$   and 
$I_{0}^{\star} \cap I^{-}_{0}$  are glued together along $I_{k}^{+}\cap I_{k}^{-} \cap I^{\star}_0$ to a light cone on the boundary of $s_{0}^{\star}$. 
The boundaries of this light cone are two simple curves. 
Each of them bounds a disk in $I^{\star}_0 \cap {\bf H}^2_{\mathbb C}$.  
The light cone and the two disks together co-bound a topologically solid light cone in $I^{\star}_0$, which is $s^{\star}_0$. 
\end{proof}

\begin{figure}[htbp]
	\begin{center}
		\begin{tikzpicture}
		\node at (0,0) {\includegraphics[width=11cm,height=5.0cm]{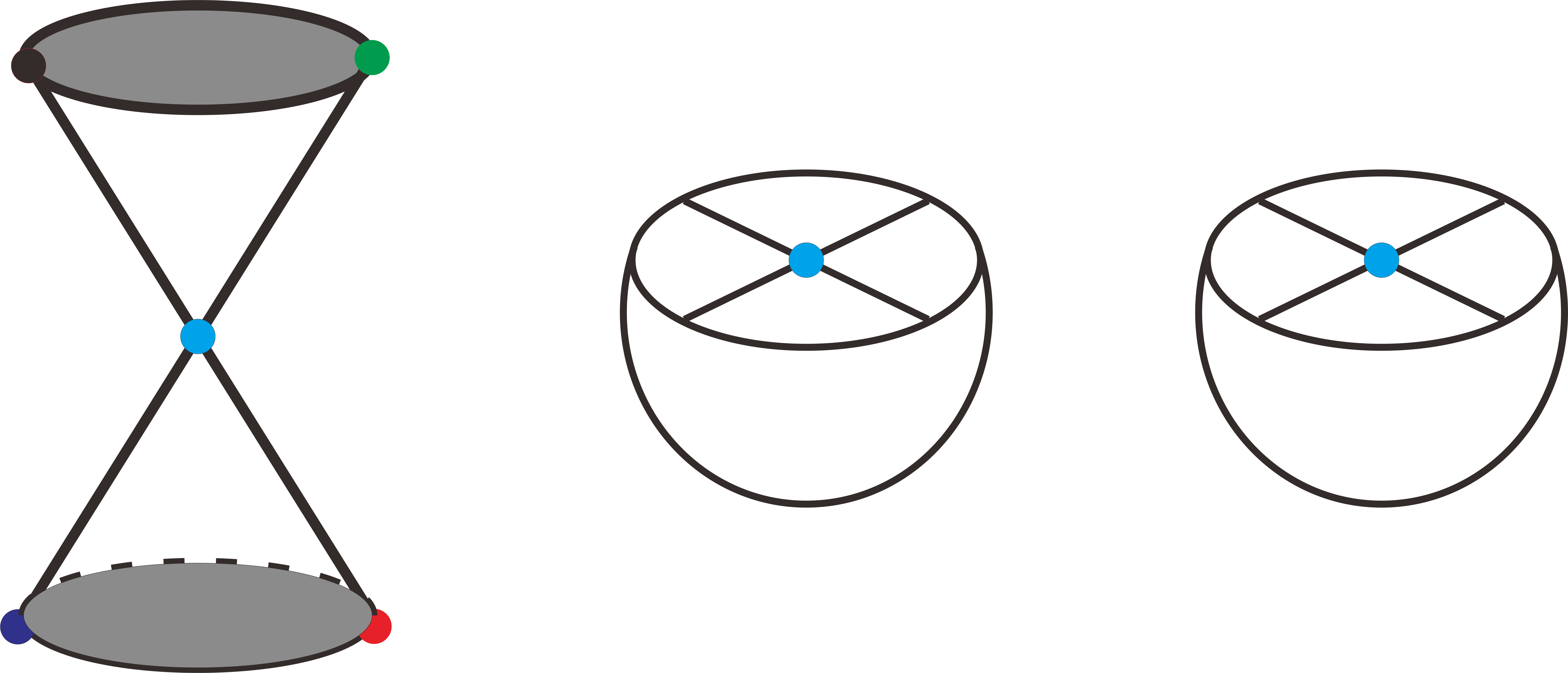}};
		
		\node at (0.2,0.2){\tiny $B^{-1}$};
		\node at (0.2,1.0){\tiny $B^{-1}$};
		
		\node at (-0.6,0.6){\tiny $B^{2}$};
		\node at (1,0.6){\tiny $B^{2}$};

	\node at (4.2,0.2){\tiny $B$};
	\node at (4.2,1.0){\tiny $B$};
	
	\node at (5.0,0.6){\tiny $B^{2}$};
	\node at (3.4,0.6){\tiny $B^{2}$};
		
		\end{tikzpicture}
	\end{center}
	\caption{3-sides of $D_{\Sigma}$. The left one is  $s^\star_0$, the two shaded disks are $s^\star_0 \cap \partial {\bf H}^2_{\mathbb C}$.
		 The central one is  $s^{+}_0$,  the $B^{-1}$-labeled  sectors are $s^{+}_0 \cap s^{-}_0$,  the $B^{2}$-labeled  sectors are $s^{+}_0 \cap s^{\star}_0$. The right one is $s^{-}_0$, the labelled sectors have similar meanings. The cyan point in each sub-figure is the fixed point of $B$. }
	\label{figure:3side4ppp}
\end{figure}

The side pairing maps on $D_{\Sigma}$  are defined by: 
$$A^kBA^{-k}:s_{k}^{+}\longrightarrow s_{k}^{-},\quad  A^kB^{-1}A^{-k}:s_{k}^{-}\longrightarrow s_{k}^{+}, \quad  A^kB^{2}A^{-k}:s_{k}^{\star}\longrightarrow s_{k}^{\star}.$$

By the $A$-symmetry, we may only consider the following properties. 
\begin{prop}\label{prop:sideparing4pp}For the side-pairings  $B$ and  $B^2$:
\begin{enumerate}[(i)]
	\item The side pairing map  $B$ is a homeomorphism from $s_{0}^{+}$ to $s_{0}^{-}$. Furthermore, $B$ sends the ridge $s_{0}^{+} \cap s_{0}^{-}$ to $s_{0}^{0} \cap s_{0}^{\star}$, and sends the ridge $s_{0}^{+} \cap s_{0}^{\star}$ to $s_{0}^{-} \cap s_{0}^{\star}$.
	
    \item The side pairing map $B^2$ is a self-homeomorphism of $s_{0}^{\star}$, and it exchanges the ridges $s_{0}^{+} \cap s_{0}^{\star}$ and $s_{0}^{-} \cap s_{0}^{\star}$.
	\end{enumerate}
\end{prop}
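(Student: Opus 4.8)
The plan is to treat $B$ and $B^2$ purely as isometries of $\overline{\mathbf H^2_{\mathbb C}}$ and to read off their action on the faces from the algebra of isometric spheres. Since $B\in\Sigma<\mathbf{PU}(2,1)$ is a holomorphic isometry, it is a homeomorphism of $\overline{\mathbf H^2_{\mathbb C}}$ preserving $\partial\mathbf H^2_{\mathbb C}$ and fixing $\mathbf p_B$; the same holds for $B^2$. The two global facts I would start from are the identity $g(I(g))=I(g^{-1})$ and the relation $B^4=\mathrm{id}$ from Theorem \ref{thm:fundamental-domain4ppp}. Together these give $B(I_0^+)=B(I(B))=I(B^{-1})=I_0^-$ and $B^2(I_0^\star)=B^2(I(B^2))=I(B^{-2})=I(B^2)=I_0^\star$, so at the level of the ambient isometric spheres $B$ carries $I_0^+$ onto $I_0^-$ and $B^2$ preserves $I_0^\star$.

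Next I would promote these statements about isometric spheres to statements about the truncated faces $s_0^\pm=I_0^\pm\cap D_\Sigma$ and $s_0^\star=I_0^\star\cap D_\Sigma$. By Propositions \ref{prop:4ppp3face0plus} and \ref{prop:4ppp3face0minus}, $s_0^+$ and $s_0^-$ are $3$-balls whose boundaries are each the union of an ideal piece ($\cap\,\partial\mathbf H^2_{\mathbb C}$) and the two Giraud-disk ridges; by Proposition \ref{prop:4ppp3face0star}, $s_0^\star$ is a solid light cone with an analogous decomposition. Because $B$ is a homeomorphism preserving $\partial\mathbf H^2_{\mathbb C}$ and sending $I_0^+$ to $I_0^-$, it sends the ideal piece of $\partial s_0^+$ to that of $\partial s_0^-$ and carries ridges to ridges, and the same scheme applies to $B^2$ on $s_0^\star$. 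To finish, one must verify the equalities $B(s_0^+)=s_0^-$ and $B^2(s_0^\star)=s_0^\star$ of the $D_\Sigma$-truncated pieces (not merely of the ambient spheres), after which the homeomorphism assertions follow because $B$ and $B^2$ are homeomorphisms matching the boundary decompositions of these $3$-balls.

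The ridge bookkeeping I would carry out with the identity $g(I(g)\cap I(h))=I(g^{-1})\cap I(hg^{-1})$, together with $B^{-2}=B^2$ and $B^{-3}=B$. For part (i),
\begin{align*}
B\bigl(I_0^+\cap I_0^-\bigr) &= I(B^{-1})\cap I(B^{-2}) = I_0^-\cap I_0^\star,\\
B\bigl(I_0^+\cap I_0^\star\bigr) &= I(B^{-1})\cap I(B) = I_0^-\cap I_0^+,
\end{align*}
so $B$ carries the two ridges of $s_0^+$ bijectively onto the two ridges $s_0^-\cap s_0^\star$ and $s_0^-\cap s_0^+$ of $s_0^-$. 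For part (ii),
\begin{align*}
B^2\bigl(I_0^\star\cap I_0^+\bigr) &= I(B^{-2})\cap I(B^{-1}) = I_0^\star\cap I_0^-,\\
B^2\bigl(I_0^\star\cap I_0^-\bigr) &= I(B^{-2})\cap I(B) = I_0^\star\cap I_0^+,
\end{align*}
and since $B^2$ has order two it is an involution of $s_0^\star$ exchanging the ridges $s_0^+\cap s_0^\star$ and $s_0^-\cap s_0^\star$. Each ridge is a union of two sectors meeting along the triple-intersection geodesics through $\mathbf p_B$ (Proposition \ref{prop:4pppGirauddisk}); since $B$ and $B^2$ fix $\mathbf p_B$, I would record that they carry sectors to sectors and the crossing locus to itself.

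The algebra above is routine; the step I expect to be the real obstacle is establishing the set equalities $B(s_0^+)=s_0^-$ and $B^2(s_0^\star)=s_0^\star$ for the truncated faces, i.e.\ that $B$ genuinely pairs the side $s_0^+$ with $s_0^-$ rather than folding part of it onto a neighbouring region. To control this I would use that $B$ maps the exterior of $I_0^+$ to the interior of $I_0^-$, together with the disjointness and tangency data of Proposition \ref{prop:pair-disjoint4ppp}, to rule out the image meeting any isometric sphere other than those recorded above; combined with the matching of the ideal boundary pieces, this pins $B(s_0^+)$ down to exactly $s_0^-$, and symmetrically for $B^2$ on $s_0^\star$. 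Once this is in place, the homeomorphism claims and the ridge correspondences are immediate.
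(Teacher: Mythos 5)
Your proposal is correct and follows essentially the same route as the paper: the paper's one\--paragraph proof also reads off the ridge images by tracking how $B$ permutes the three points $\mathbf q_{\infty}$, $B^{-1}(\mathbf q_{\infty})$, $B(\mathbf q_{\infty})$ appearing in the triple equality defining a ridge, which is exactly the identity $g(I(g)\cap I(h))=I(g^{-1})\cap I(hg^{-1})$ that you invoke. One remark worth recording: your computation gives $B(I_0^{+}\cap I_0^{\star})=I_0^{-}\cap I_0^{+}$, which disagrees with the second ridge claim as printed ($s_0^{-}\cap s_0^{\star}$), but your answer is the correct one --- it is the one consistent with the ridge cycle $s_{0}^{+}\cap s_{0}^{\star}\xrightarrow{B} s_{0}^{+}\cap s_{0}^{-}\xrightarrow{B} s_{0}^{\star}\cap s_{0}^{\star}$ used in the proof of Theorem \ref{thm:fundamental-domain4ppp}, so the statement contains typos (as does ``$s_{0}^{0}$''). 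You are also more explicit than the paper about upgrading equalities of the ambient isometric spheres to equalities of the truncated faces $s_0^{\pm}$, $s_0^{\star}$ inside $D_{\Sigma}$; the paper leaves that step implicit, and your appeal to the exterior/interior mapping property together with Proposition \ref{prop:pair-disjoint4ppp} is a reasonable way to close it.
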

\begin{proof}  
The ridge $s_{0}^{+}\cap s_{0}^{-}$
	is defined by the triple equality
	$$\langle z, \mathbf{ q}_{\infty}\rangle=\langle z, B^{-1}(\mathbf{ q}_{\infty})\rangle=\langle z, B(\mathbf{ q}_{\infty})\rangle.$$ 
	The map $B$ sends   $$\{\mathbf{ q}_{\infty}, B^{-1}(\mathbf{ q}_{\infty}), B(\mathbf{ q}_{\infty}) \}$$ to  $$\{B(\mathbf{q}_{\infty}), \mathbf{ q}_{\infty}, B^2(\mathbf{ q}_{\infty})\}.$$ So $B$   sends the ridge $s_{0}^{+} \cap s_{0}^{-}$ to $s_{0}^{0} \cap s_{0}^{\star}$. 
    Another claim in Proposition \ref{prop:sideparing4pp} can be proved similarly. 
\end{proof}

{\bf Proof of Theorem \ref{thm:fundamental-domain4ppp}:} 
After Propositions 	\ref{prop:4ppp3face0plus},	\ref{prop:4ppp3face0minus},	\ref{prop:4ppp3face0star} and  \ref{prop:sideparing4pp}.

\paragraph{{\bf Local tessellation}}
We prove the tessellations around the sides and ridges of  $D_{\Sigma}$.

(1). Since $A^kB^{\pm}A^{-k}$ sends the exterior of $I_k^{\pm}$ to the interior of $I_k^{\mp}$, we see that $D_{\Sigma}$ and $A^kB^{\pm}A^{-k}(D_{\Sigma})$ have disjoint interiors and cover a neighbourhood of each point in the interior of the 3-side $s_k^{\mp}$.

(2). For the ridge  $s_{0}^{+} \cap s_{0}^{\star}$,  the ridge circle is
\begin{flalign}
\nonumber  s_{0}^{+} \cap s_{0}^{\star} \xrightarrow{B} s_{0}^{+} \cap s_{0}^{-} \xrightarrow{B} s_{0}^{\star} \cap s_{0}^{\star}  \xrightarrow{B^2}  s_{0}^{+} \cap s_{0}^{\star}.
\end{flalign}
This gives the relation $B^4=id$;
By a standard argument as in \cite{ParkerWill:2017}, we have that $D_{\Sigma} \cup B^2(D_{\Sigma}) \cup  B^{-1}(D_{\Sigma})$ will cover a small neighborhood of $s_{0}^{+} \cap s_{0}^{\star}$.

\paragraph{{\bf Completeness}}

We must construct a system of consistent horoballs at the parabolic fixed points.
First, we consider the side pairing maps on the parabolic fixed points $x_1=p_{AB}$,  $x_2=p_{AB^2}$ and $x_3=p_{AB^{-1}}$.
We have 
\begin{eqnarray*}
	B &:&   p_{AB} \longrightarrow p_{BA},\\
	A&:&   p_{BA} \longrightarrow p_{AB},\\
	B^2 &:&   p_{AB^{2}} \longrightarrow p_{B^2A} ,\\
	A  &:&   p_{B^{2}A} \longrightarrow  p_{AB^{2}},\\
B^{-1} &:&   p_{AB^{-1}}  \longrightarrow  p_{B^{-1}A},\\
A &:&   p_{B^{-1}A} \longrightarrow  p_{AB^{-1}}.
\end{eqnarray*}

Up to powers of $A$,  the cycles for the parabolic fixed points are the following
\begin{eqnarray*}
	&&p_{AB} \xrightarrow{B} p_{BA}\xrightarrow{A} p_{AB},\\
	&&p_{AB^2} \xrightarrow{B^2} p_{B^2A}\xrightarrow{A} p_{AB^2},\\&& p_{AB^{-1}} \xrightarrow{B^{-1}} p_{BA^{-1}}\xrightarrow{A} p_{AB^{-1}}.
\end{eqnarray*}
That is,  $p_{AB}$, $p_{AB^2}$ and $p_{AB^{-1}}$ are fixed by $AB$, $AB^2$ and $AB^{-1}$ respectively.
The elements $AB$, $AB^2$ and $AB^{-1}$ are unipotent and  preserve all horoballs at $p_{AB}$, $p_{AB^2}$ and $p_{AB^{-1}}$ respectively.

This ends the proof of Theorem \ref{thm:fundamental-domain4ppp}.

\section{Topology of the ideal boundary of the Ford domain 
\texorpdfstring{$D_{\Sigma}$}{} %
}
\label{section:topology4pppford}

In this section, we study the topology at infinity of the Ford domain $D_{\Sigma}$ of $ \Sigma < \Delta_{4,\infty,\infty;\infty}$.
Take a singular surface 
$$\mathcal{S} = \bigcup_{k \in \mathbb{Z}} ((s_k^{+} \cup s_k^{-} \cup s_k^{ \star})\cap \partial \hc)$$  
in the Heisenberg group.
Part of $\mathcal{S}$ is shown in Figures \ref{figure:4pppford} and \ref{figure:4pppfordabstract}.
The ideal boundary of a Ford domain is the intersection of the exteriors of all spinal spheres. 
So, the ideal boundary of $D_{\Sigma}$,
$$\partial_{\infty} D_{\Sigma}=D_{\Sigma} \cap \partial \hc,$$ 
is the region outside $\mathcal{S}$ in Figures  \ref{figure:4pppford} and \ref{figure:4pppfordabstract}.
We will show that $\partial_{\infty} D_{\Sigma}$ is an $A$-invariant infinite genus
handlebody.

Figure \ref{figure:4pppford} indicates that there are "holes" in $\partial_{\infty} D_{\Sigma}$. 
One "hole" with purple boundary in Figure \ref{figure:4pppfordabstract} is enclosed by the spinal spheres of $I^{+}_0$, $I^{\star}_0$, $I_{1}^{-}$ and $I_{1}^{\star}$, labeled as $B$, $B^2$, $AB^{-1}A^{-1}$ and $AB^2A^{-1}$ respectively.    
The "holes" make the topology of
$\partial_{\infty}D_{\Sigma}$ dramatically different from those studied in \cite{Deraux:2016, jwx, MaXie2020, ParkerWill:2017}.  
The ideal boundaries of the Ford domains in this literature are all infinite cylinders. 
Since the topology of $\partial_{\infty} D_{\Sigma}$ is complicated, we need explicit disks to cut out a 3-ball from it in our proof of Theorem \ref{thm:4pp}.

\subsection{Two disks 
\texorpdfstring{$E_B$ and $E_{B^{-1}}$}{} %
} 

We will construct a set of disks 
$$\{A^{k}(E_B \cup E_{B^{-1}})\}_{k \in \mathbb{Z}},$$ 
which capping off all the  "holes"  in  $\partial_{\infty} D_{\Sigma}$. 
By cutting $\partial_{\infty} D_{\Sigma}$ along these disks we get an infinite cylinder, 
which is a solid torus topologically. 

Recall the points $x_{i}$ for $i=1,2,3,4$ in Section \ref{section:ford}.  We also take two points
\begin{equation*}
y_1=\left[\begin{matrix}
-\frac{1}{2} \\ -\frac{\sqrt{3}-i}{2}  \\  1 \end{matrix}\right] \in \partial_{\infty}I^{+}_0\cap \partial_{\infty}I^{\star}_0, \quad
\quad
y_2=\left[\begin{matrix}
-\frac{1}{2} \\ -\frac{\sqrt{3}+i}{2} \\ 1\end{matrix}\right] \in \partial_{\infty}I^{-}_0\cap \partial_{\infty}I^{\star}_0.
\end{equation*}
Now consider a topological circle $C_{B}$ consisting of four arcs:
\begin{itemize}
\item Let $C_5$ be the arc in $s^{+}_0 \cap \partial \mathbf{H}^2_{\mathbb C}$ joining 
$x_1$ to $y_1$ given by
\begin{equation*}\label{c5}
    C_5=\left\{\begin{bmatrix}
-\frac{1}{2}-e^{2si}-i\sqrt{2\cos{(2s)}}e^{\frac{2s-\pi}{4}i} \\ 
-\sqrt{2\cos{(2s)}}e^{\frac{2s-\pi}{4}i}+i \\ 
1\end{bmatrix}
:  s\in [\frac{5\pi}{6},\pi]\right\}.
\end{equation*}
\item Let $C_1$ be the $\mathbb{C}$-arc in $s^{\star}_0 \cap \partial \mathbf{H}^2_{\mathbb C}$ joining 
 $y_1$ to $x_2$ given by 
 \begin{equation*}\label{c1}
    C_1=\left\{\begin{bmatrix}
-\frac{1}{2} \\ 
e^{is} \\ 
1\end{bmatrix}
:  \ s\in [\frac{5\pi}{6},\pi]\right\}.
\end{equation*}
\item Let $C_6$ be the arc in $s^{-}_1 \cap \partial \mathbf{H}^2_{\mathbb C}$ joining 
 $AB^2(y_1)$ and $x_1$ given by 
 \begin{equation*}
    C_6=\left\{\begin{bmatrix}
-\frac{5}{2}-2i-e^{2si}+(2-i)\sqrt{2\cos{(2s)}}e^{\frac{2s-\pi}{4}i} \\ 
-2-i+\sqrt{2\cos{(2s)}}e^{\frac{2s-\pi}{4}i} \\ 
1\end{bmatrix}
: \ s\in [\frac{5\pi}{6},\pi]\right\}.
\end{equation*}
\item  Let $C_2$ be the $\mathbb{C}$-arc in $s^{\star}_1 \cap \partial \mathbf{H}^2_{\mathbb C}$ joining 
 $x_2$ and $AB^2(y_1)$ given by 
 \begin{equation*}
    C_2=\left\{\begin{bmatrix}
-\frac{5}{2}-2e^{is} \\ 
-2-e^{is} \\ 
1\end{bmatrix}
: \ s\in [\frac{5\pi}{6},\pi]\right\}.
\end{equation*}
\end{itemize}

We will show later that the union of $C_5$, $C_1$, $C_6$ and $C_2$ is a simple closed curve, say $C_B$. We use ruled surfaces to construct a disk $E_B$ with boundary $C_{B}$.
One of the ruled surfaces with directrices $C_5$ and $C_1$, is denoted by $E_B^l$. Another one with directrices $C_6$ and $C_2$, is denoted by $E_B^r$.
Let $c_i$ represents the coordinates of $C_i$ in $\mathcal{N}$ for $i=1,2,5,6$. 
The ruled surfaces can be described as
\begin{equation*}
        E_B^l=(1-a)\cdot c_5+a\cdot c_1,\quad
        E_B^r=(1-a)\cdot c_6+a\cdot c_2
\end{equation*}
for $a\in[0,1]$. 
In Figure \ref{figure:E1E2}, $E_B^l$ appears as the blue part, and $E_B^r$ appears as the green part.
We define $E_B=E_B^l\cup E_B^r$.
It is easy to verify that $E_B^l\cap E_B^r$ is the straight-line segment from $x_1$ to $x_2$ in $\mathcal{N}$.
In Figure \ref{figure:capoffwithEB}, $E_{B}$ patches up the "hole" of $\partial_{\infty} D_{\Sigma}$ enclosed by the spinal spheres  $\partial_{\infty}I_0^{+}$, $\partial_{\infty}I_1^{-}$, $\partial_{\infty}I^{\star}_0$ and $\partial_{\infty}I^{\star}_1$.

\begin{figure}[tbp]
\centering
\begin{minipage}{.53\textwidth}
  \centering
  \begin{overpic}[width=7cm,height=6cm]{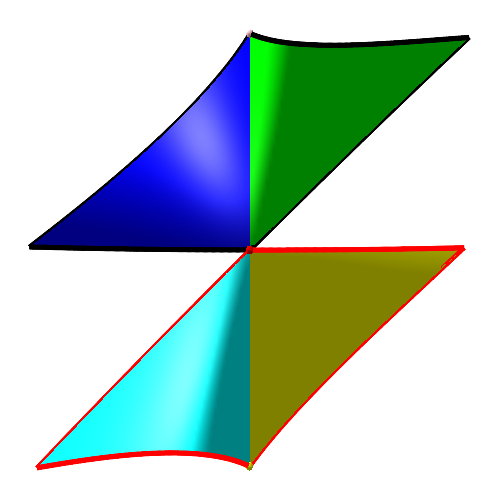}
  \put(82,81.05){\small{$AB^2(y_1)$}}
  \put(82,44){\small{$AB^2(y_2)$}}
  \put(0,42.05){\small{$y_1$}}
  \put(2,2.05){\small{$y_2$}}
  \put(42,40){\small{$x_2$}}
  \put(43,80.05){\small{$x_1$}}
  \put(44,2){\small{$x_3$}}
  \end{overpic}
  \caption{The disks $E_{B}$ (the union of the blue and green triangles)
    and $E_{B^{-1}}$ (the union of the cyan and yellow triangles).}
  \label{figure:E1E2}
\end{minipage}%
\hfill
\begin{minipage}{.47\textwidth}
  \centering
  {\includegraphics[width=6.5cm,height=6cm]
  {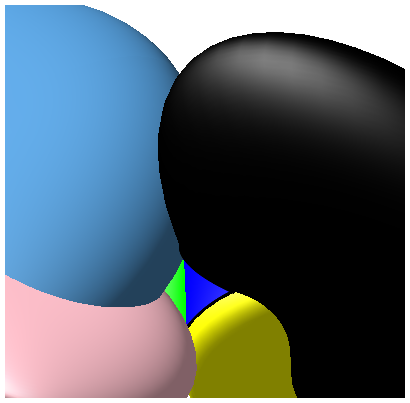}}
  \caption{The disk $E_{B}$ caps off the "hole" of $\partial_{\infty} D_{\Sigma}$ enclosed by the spinal spheres $\partial_{\infty}I_0^{+}$ (black), $\partial_{\infty}I_1^{-}$ (steelblue), $\partial_{\infty}I^{\star}_0$ (yellow) and $\partial_{\infty}I^{\star}_1$ (pink).}
  \label{figure:capoffwithEB}
\end{minipage}
\end{figure}

As an example, we give the  Heisenberg coordinates of $E_B^l$:
\begin{equation}\label{xyt}
    \begin{cases}
        x=\cos{(s)}a -\sqrt{2\cos{(2s)}}\sin{\frac{2s+\pi}{4}}\cdot(1-a)\\
        y=\sin{(s)}a+\Big(1+\sqrt{2\cos{(2s)}}\cos{\frac{2s+\pi}{4}}\Big)\cdot(1-a)\\
        t=-2\Big(\sin{(2s)}+ \sqrt{2\cos{(2s)}}\sin{\frac{2s+\pi}{4}}\Big)\cdot(1-a),
\end{cases}
\end{equation}
where $a\in[0,1]$ and $s\in[\frac{5\pi}{6},\pi]$.
We use projection mapping to give results on whether $E_B^l$ intersects the spinal spheres.
\begin{lem}
    Let $\Pi:\mathcal{N}\longrightarrow\mathbb{C}$ be the nature projection.
    The union of $\Pi(C_5)$ and $\Pi(C_1)$ is a simple closed curve in $\mathbb{C}$ contained in $\Pi(\partial_{\infty}I_0^{-})$ and $\Pi(\partial_{\infty} I_1^{-})$. 
    Besides, 
    \begin{enumerate}[(i)]
    \item \label{negative01}
    $\Pi(E_B^l)$ intersects $\Pi(\partial_{\infty}I_0^{-})$ and $\Pi(\partial_{\infty} I_1^{-})$ at the same point $\Pi(x_1)$.
    \item \label{star0c1}
    $\Pi(E_B^l)$ intersects $\Pi(\partial_{\infty}I_0^{*})$ to be the arc $\Pi(C_1)$.
    \item \label{kkk}
    $\Pi(E_B^l)$ intersects $\Pi(\partial_{\infty}I_k^{*}\cup \partial_{\infty}I_k^{+}\cup \partial_{\infty}I_k^{-})_{k\in \mathbb{Z}}$ to be the empty set for $k\neq 0,1$.
    \end{enumerate}
\end{lem}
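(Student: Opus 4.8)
The plan is to reduce every assertion to an elementary one-variable inequality by exploiting the affine (ruled) structure of $E_B^l$ in the parameter $a$. First I would record that, by Proposition \ref{prop:center-radius4ppp} together with the fact that a spinal sphere is the $u=0$ slice of a Cygan sphere, each projection $\Pi(\partial_{\infty}I_k^{+})$, $\Pi(\partial_{\infty}I_k^{-})$, $\Pi(\partial_{\infty}I_k^{\star})$ is a \emph{round} closed disk in $\mathbb{C}$ with centres $-2k+i,\ -2k-i,\ -2k$ and radii $\sqrt2,\sqrt2,1$ respectively. Thus each claim has the form ``the planar set $\Pi(E_B^l)$ lies in the exterior of a round disk $\bar D(c,r)$ and meets $\bar D(c,r)$ exactly in a prescribed set''. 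Writing $z(a,s)=x(a,s)+iy(a,s)$ as in (\ref{xyt}), the central observation is that for fixed $s$ the map $a\mapsto z(a,s)$ is affine, so $a\mapsto |z(a,s)-c|^2$ is a convex quadratic in $a$. Hence its minimum over $a\in[0,1]$ is attained at an endpoint or at the unique critical point, and every statement reduces to controlling this minimum as a function of the single variable $s\in[\tfrac{5\pi}{6},\pi]$. This convexity in $a$ is precisely what lets me bound $\Pi$ over the whole two-dimensional surface, not merely along its boundary directrices, and so rules out the a priori danger that the region $\Pi(E_B^l)$ might swallow one of the disks.

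Next I would dispose of the boundary curve. Along $a=1$ one has $z(1,s)=e^{is}$, so $\Pi(C_1)$ is the arc of the unit circle $\partial\Pi(\partial_{\infty}I_0^{\star})$ from $e^{5\pi i/6}=\Pi(y_1)$ to $-1=\Pi(x_2)$. Along $a=0$ a direct computation gives $|z(0,s)|^2=1+2\cos 2s-2\sqrt{2\cos 2s}\,\sin\tfrac{2s-\pi}{4}\geq 1$, with equality only at $s=\tfrac{5\pi}{6},\pi$, so $\Pi(C_5)$ lies strictly outside the unit circle except at its endpoints $\Pi(x_1)=-1$ and $\Pi(y_1)$. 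Since $\Pi(C_5)$ is embedded (its argument is monotone on the range of $s$) and meets $\Pi(C_1)$ only at the two shared endpoints, $\Pi(C_5)\cup\Pi(C_1)$ is a Jordan curve; the identity $x^2+(y-1)^2=2\cos 2s\leq 2$ moreover shows $\Pi(C_5)\subset\Pi(\partial_{\infty}I_0^{+})$, while $\Pi(C_1)\subset\Pi(\partial_{\infty}I_0^{\star})$ is immediate, which is the containment of the two directrices in their host spinal disks.

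For item (\ref{star0c1}) I would prove $|z(a,s)|^2\geq 1$ on the whole rectangle with equality exactly on $a=1$: fixing $s$, the convex quadratic $h(a)=|z(a,s)|^2$ satisfies $h(1)=1$ and $h'(1)=2\bigl(1-\mathrm{Re}(e^{-is}z(0,s))\bigr)\leq 0$, because the chord points radially outward at $e^{is}$; convexity then forces $h(a)\geq 1$ on $[0,1]$, equality only at $a=1$. Hence $\Pi(E_B^l)\cap\Pi(\partial_{\infty}I_0^{\star})=\Pi(E_B^l)\cap\{|z|=1\}=\Pi(C_1)$. For item (\ref{negative01}) I would run the convexity argument with $c=-i$ and $c=-2-i$ and target radius $\sqrt2$: one checks the endpoint values (for $c=-i$ these are $|z(0,s)+i|^2=2\cos 2s+4-4\sqrt{2\cos2s}\,\sin\tfrac{2s-\pi}{4}$ and $|z(1,s)+i|^2=2+2\sin s$) and the interior critical value all stay $\geq 2$, with equality forced only when $z=-1$, which occurs exactly on the degenerate ruling $s=\pi$ where $z(a,\pi)\equiv-1=\Pi(x_1)$; this gives $\Pi(E_B^l)\cap\Pi(\partial_{\infty}I_0^{-})=\Pi(E_B^l)\cap\Pi(\partial_{\infty}I_1^{-})=\{\Pi(x_1)\}$. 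Item (\ref{kkk}) is the easiest: both terms of $x(a,s)$ in (\ref{xyt}) are non-positive and bounded below, so $\mathrm{Re}\,z(a,s)$ lies in a small interval around $-1$; this separates $\Pi(E_B^l)$ in real part from every disk $\Pi(\partial_{\infty}I_k^{\bullet})$ with $k\leq-1$ (centres with $\mathrm{Re}\geq 2-\sqrt2$) or $k\geq 2$ (centres with $\mathrm{Re}\leq-4+\sqrt2$), yielding the emptiness.

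The main obstacle is item (\ref{negative01}). Unlike the unit-disk case, the exterior of $\bar D(-i,\sqrt2)$ is \emph{not} convex, so knowing that both directrices avoid the disk does not suffice: a chord joining two exterior points can still cut the disk. The convexity of $a\mapsto|z(a,s)-c|^2$ is exactly the device that tames this, reducing the question to the squared distance from $c$ to each ruling segment; but locating the interior critical point $a^{\star}(s)$, verifying the resulting one-variable trigonometric inequality in $s$, and pinning down that equality occurs only along $s=\pi$ is the delicate computational heart of the argument. Everything else is bookkeeping with the explicit parametrisation (\ref{xyt}).
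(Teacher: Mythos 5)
Your overall strategy --- project to $\mathbb{C}$, observe that each $\Pi(\partial_{\infty}I_k^{+})$, $\Pi(\partial_{\infty}I_k^{-})$, $\Pi(\partial_{\infty}I_k^{\star})$ is a round disk, and exploit the fact that $a\mapsto z(a,s)$ is affine so that $a\mapsto|z(a,s)-c|^{2}$ is a convex quadratic --- is sound, and for items (ii) and (iii) it works. Your treatment of (ii) (namely $h(1)=1$, $h'(1)\le 0$, then convexity) is exactly a cleaned-up version of the paper's inner-product computation showing $-d_l\cdot d_u\ge 0$ followed by ``affine convexity'', and your real-part bound for (iii) is an acceptable substitute for the paper's observation that $\Pi(E_B^l)$ sits inside the lens $\Pi(\partial_{\infty}I_0^{+})\cap\Pi(\partial_{\infty}I_1^{+})$, which is separated from all the other projected disks. (One small point: the containment asserted in the first sentence of the lemma should be read with superscripts $+$, i.e.\ containment in $\Pi(\partial_{\infty}I_0^{+})$ and $\Pi(\partial_{\infty}I_1^{+})$, as in the paper's own proof; indeed $\Pi(y_1)=e^{5\pi i/6}$ is at distance $\sqrt{3}>\sqrt{2}$ from $-i$, so the curve is not contained in $\Pi(\partial_{\infty}I_0^{-})$. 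You only verify containment of each directrix in one disk, whereas containment of both directrices in both of $\Pi(\partial_{\infty}I_0^{+})$ and $\Pi(\partial_{\infty}I_1^{+})$ is the load-bearing fact below.)

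The genuine gap is item (i). You correctly identify that the exterior of the disk $\Pi(\partial_{\infty}I_0^{-})$ is not convex and that one must therefore control the whole ruling, but your proposed fix --- locate the interior critical point $a^{\star}(s)$ of the quadratic and verify the resulting one-variable trigonometric inequality --- is explicitly left undone (``the delicate computational heart''), so as written (i) is not proved. Moreover this computation is unnecessary: the two disks you would be fighting are \emph{externally tangent} to disks that contain $\Pi(E_B^l)$. Concretely, $\Pi(\partial_{\infty}I_0^{+})$ and $\Pi(\partial_{\infty}I_1^{-})$ have centres $i$ and $-2-i$ at distance $2\sqrt{2}$, the sum of their radii, so they meet only at the tangency point $-1=\Pi(x_1)$; likewise $\Pi(\partial_{\infty}I_1^{+})$ and $\Pi(\partial_{\infty}I_0^{-})$ meet only at $-1$. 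Since both directrices $\Pi(C_5)$ and $\Pi(C_1)$ lie in the convex sets $\Pi(\partial_{\infty}I_0^{+})$ and $\Pi(\partial_{\infty}I_1^{+})$ (a two-line check on $|z-i|$ and $|z+2-i|$), so does every ruling segment, hence
$$\Pi(E_B^l)\cap\Pi(\partial_{\infty}I_0^{-})\subseteq\Pi(\partial_{\infty}I_1^{+})\cap\Pi(\partial_{\infty}I_0^{-})=\{\Pi(x_1)\},$$
and similarly for $\Pi(\partial_{\infty}I_1^{-})$, with $\Pi(x_1)$ actually attained since $x_1$ is an endpoint of $C_5$. This is the paper's argument; it replaces your unexecuted critical-point analysis and is the one missing idea in your proposal.
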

\begin{proof}  
    According to the equations of $C_5$ and $C_1$,  it is easy to know $$\Pi(C_5)\cap \Pi(C_1)=\Pi(y_1) \cup \Pi(x_1).$$
    So $\Pi(C_5) \cup \Pi(C_1)$ is a simple closed curve. 
    It is also easy to check that $\Pi(C_5)$ and $\Pi(C_1)$ are all contained in $\Pi(\partial_{\infty}I_1^{+})$ and $\Pi(\partial_{\infty}I_0^{+})$.
    We have completed the first half of the proof. 
    
    For \ref{negative01}, according to $$\Pi(x_1)=\Pi(x_2)=\Pi(\partial_{\infty}I_0^{+})\cap\Pi(\partial_{\infty}I_1^{-})=\Pi(\partial_{\infty}I_0^{-})\cap\Pi(\partial_{\infty}I_1^{+}),$$ 
    and the affine convexity of $\Pi(\partial_{\infty}I_0^{+})$ and $\Pi(\partial_{\infty}I_1^{+})$, we know that 
    $$\Pi(E_B^l)\cap\Pi(\partial_{\infty}I_0^{-})=\Pi(E_B^l)\cap\Pi( \partial_{\infty}I_1^{-})=\Pi(x_1).$$
  
    For \ref{star0c1}, we prove that for a given $s\in[\frac{5\pi}{6},\pi]$, the projection of each straight line on $E_B^l$, with parameterization (\ref{xyt}), intersects $\Pi(C_1)$ 
    at one point. 
    Write $d_u=\big(\cos{(s)},\sin{(s)}\big)$, which is the coordinate of some point in $\Pi(C_1)$. 
    It is also the vector from the origin $O$ in $\mathbb{C}$ to this point.
    Write $d_l=\big(\frac{\partial x}{\partial a},\frac{\partial y}{\partial a}\big)\big |_{a=1}$, which is the direction of the projection of the straight line on $E_B^l$.
    Note that when $a=0$, the corresponding point of $E_B^l$ is in $C_5$, and when $a=1$, the corresponding point is in $C_1$.
    By calculating the inner product of $-d_l$ and $d_u$, we can determine whether the point in the straight line of $\Pi(E_B^l)$ is far away from $\Pi(C_1)$ when $a$ decreases from 1 to 0.
    In fact,  the inner product of $-d_l$ and $d_u$ is
       \begin{equation*}
        \begin{aligned}
            &\big(\sin{(\frac{s}{2})}-\cos{(\frac{s}{2})}\big)\sqrt{\cos{(2s)}}+ \sin{(s)}-1\\
            &=-\sqrt{2}\sin{(\frac{\pi}{4}-\frac{s}{2})}  \cdot \big(\sqrt{2}\sin{(\frac{\pi}{4}-\frac{s}{2})}+\sqrt{\cos{(2s)}}\big),
        \end{aligned}
    \end{equation*}
  which is non-negative when $s\in[\frac{5\pi}{6},\pi]$.
   By the affine convexity of
$\Pi( \partial_{\infty}I^{\star}_0)$, we have $\Pi(E_B^l)\cap \Pi(\partial_{\infty}I^{\star}_0)=\Pi(C_1)$.

   For \ref{kkk}, because
$$\Pi(E_B^l)\subset\Pi(\partial_{\infty}I_0^{-})\cap\Pi( \partial_{\infty}I_1^{-}),$$
   which is outside all $\Pi(\partial_{\infty}I_k^{*}\cup \partial_{\infty}I_k^{+}\cup \partial_{\infty}I_k^{-})_{k\in \mathbb{Z}}$ for $k\neq 0,1$, we get the conclusion.
\end{proof}

Instead of drawing entire projections of $E_B^l$ and $E_B^r$, we draw the projections of several straight lines on them, represented by blue and green line segments respectively in Figure \ref{figure:proj}.
The projections of $E_{B}$ and $E_{B^{-1}}$ are the union of blue and green crescents and the union of yellow and cyan crescents respectively.
The projection of $\partial_{\infty} I_0^{+}$, $\partial_{\infty} I_0^{-}$, $\partial_{\infty} I_1^{+}$ and $\partial_{\infty} I_1^{-}$ are the round disk with radius $\sqrt{2}$ and centered at $i$, $-i$, $-2+i$ and $-2-i$, respectively.
The projections of $\partial_{\infty} I_{0}^{\star}$ and $\partial_{\infty} I_1^{\star}$ are round disks with radius $1$ and centered at $0$ and $-2$ respectively. 
The projection of $F$ is the union of magenta, coral and navy curves.
Figure \ref{figure:proj51} gives an enlarged view of $\Pi(E_B^l)$.

\begin{figure}[tbp]
\centering
\begin{minipage}{.52\textwidth}
  \centering
  \begin{overpic}[width=6cm,height=7.5cm]{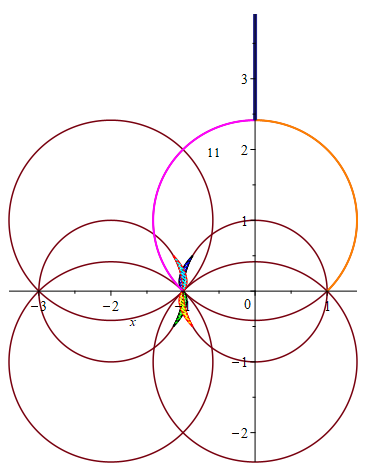}
  \put(56.5,85){\small{$\Pi(F_3)$}}
  \put(76.5,60){\small{$\Pi(F_1)$}}
  \put(22,60){\small{$\Pi(F_2)$}}
  \end{overpic}
  \caption{The projections of $\partial_{\infty} I_0^{\pm}$, $\partial_{\infty}I_1^{\pm}$, $\partial_{\infty}I_{0}^{\star}$, $\partial_{\infty}I_1^{\star}$, $E_{B}$, $E_{B^{-1}}$ and $F$.}
  \label{figure:proj}
\end{minipage}%
\hfill
\begin{minipage}{.48\textwidth}
  \centering
  \begin{overpic}[width=5.5cm,height=7.3cm]{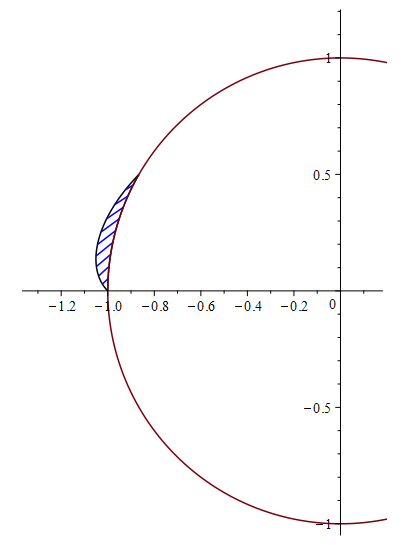}
  \put(20.8,51){\small{$\Pi(C_1)$}}
  \put(2,51){\small{$\Pi(C_5)$}}
  \end{overpic}
  \caption{An enlarged view of $\Pi(E_B^l)$ and part of $\Pi(\partial_{\infty}I^{\star}_0)$.}
  \label{figure:proj51}
\end{minipage}
\end{figure}

For the relationship between $E_B^l$ and the remaining spinal spheres, we have:
\begin{lem}
    For $E_B^l\cap(\partial_{\infty}I_0^{+}\cup \partial_{\infty}I_1^{+}\cup \partial_{\infty}I_0^{*})$, we have 
    \begin{enumerate}[(i)]
        \item \label{caps0}$E_B^l\cap \partial_{\infty}I_0^{+}=C_5$.
        \item \label{caps1*}$E_B^l\cap \partial_{\infty}I_1^{*}=x_2$.
        \item \label{caps1+}$E_B^l\cap \partial_{\infty}I_1^{+}=\emptyset$.
    \end{enumerate}
\end{lem}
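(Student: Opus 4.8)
The plan is to treat the three claims uniformly: whether a point of $E_B^l$ lies on a spinal sphere $\partial_\infty I$ is governed by the Cygan-sphere equation \eqref{eq:cygan-sphere}, so I would substitute the explicit Heisenberg parameterization \eqref{xyt} of $E_B^l$ into that equation for each of $\partial_\infty I_0^+$, $\partial_\infty I_1^\star$ and $\partial_\infty I_1^+$ (centers and radii from Proposition~\ref{prop:center-radius4ppp}) and read off the zero set of the resulting real function $g_I(a,s)$ on the rectangle $R=[0,1]\times[\tfrac{5\pi}{6},\pi]$. Since $x,y,t$ are affine in $a$, each $g_I(\cdot,s)$ is a quartic polynomial in $a$ equal to $d_{\mathrm{Cyg}}^{\,4}-r^4$, so that $g_I>0$, $=0$, $<0$ detects the exterior, the sphere, and the interior of the Cygan ball. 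The easy containments come for free: $C_5=\{a=0\}\subset E_B^l$ lies on $\partial_\infty I_0^+$ by construction, and $x_2=E_B^l(1,\pi)$ is the endpoint of $C_1$, lying on $\partial_\infty I_1^\star$ by the tangency in Proposition~\ref{prop:pair-disjoint4ppp}\ref{item:Jktangent}. Thus the content is the reverse inclusions, i.e. the sign of $g_I$ off the prescribed loci.

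For (i), $C_5\subset\partial_\infty I_0^+$ forces $g_{I_0^+}(0,s)\equiv0$, and the ruling segment collapses to the single point $y_1\in C_5$ when $s=\tfrac{5\pi}{6}$, so that $g_{I_0^+}(a,\tfrac{5\pi}{6})\equiv0$ as well; hence the zero set of
\[
g_{I_0^+}(a,s)=\bigl(x^2+(y-1)^2\bigr)^2+(t-2x)^2-4
\]
contains $\{a=0\}\cup\{s=\tfrac{5\pi}{6}\}$, and both map into $C_5$. It then remains to prove $g_{I_0^+}>0$ on the rest of $R$, which I would do by factoring out the two known vanishing factors and showing the quotient is positive; this exhibits each ruling segment as exiting $\partial_\infty I_0^+$ transversally into the exterior and never returning, giving $E_B^l\cap\partial_\infty I_0^+=C_5$.

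For (ii) the two directrices are controlled cheaply by Proposition~\ref{prop:pair-disjoint4ppp}: since $I_0^+\supset C_5$ is disjoint from $I_1^\star$ (item~\ref{item:JkIk} together with the $A$-symmetry) we get $C_5\cap\partial_\infty I_1^\star=\varnothing$, and since $I_0^\star\supset C_1$ is tangent to $I_1^\star$ only at $x_2$ (item~\ref{item:Jktangent}) we get $C_1\cap\partial_\infty I_1^\star=\{x_2\}$. The remaining work is again the interior of the ruled surface: I would substitute \eqref{xyt} into the equation of $\partial_\infty I_1^\star$ and show $g_{I_1^\star}>0$ except at $(a,s)=(1,\pi)$, so that no ruling segment pokes into the Cygan ball of $I_1^\star$. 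For (iii), the preceding lemma and convexity give $\Pi(E_B^l)\subseteq\Pi(\partial_\infty I_1^+)$, so the projection cannot separate the two sets and emptiness must be extracted from the $t$-coordinate; I would show that $g_{I_1^+}$ is strictly positive on all of $R$, whence $E_B^l\cap\partial_\infty I_1^+=\varnothing$.

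The main obstacle is these two-variable positivity statements, and specifically the non-transverse behaviour of the zero sets in (i) and (ii): in (i) the ruling degenerates along the edge $s=\tfrac{5\pi}{6}$ and $g_{I_0^+}$ vanishes on both edges $\{a=0\}$ and $\{s=\tfrac{5\pi}{6}\}$, while in (ii) the contact at $x_2$ is tangential, so $g_{I_1^\star}$ has a degenerate zero at $(1,\pi)$ and one must rule out nearby zeros; one therefore cannot simply divide by $a$ and conclude. I expect the cleanest route is to factor each $g_I$ as (the product of its explicit vanishing factors) times a remainder, and then bound that remainder below by an elementary trigonometric expression that is manifestly positive on $[\tfrac{5\pi}{6},\pi]$. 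By contrast (iii) is a clean strict positivity with a uniform gap (e.g. $g_{I_1^+}$ takes values of order ten at $x_1$, $y_1$ and at interior sample points), so it should be the easiest of the three. The projection inclusions of the preceding lemma are used beforehand to discard all spheres $\partial_\infty I_k^{\pm},\partial_\infty I_k^\star$ with $k\neq0,1$ and to localise where the surviving intersections can occur.
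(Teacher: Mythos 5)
Your proposal is correct in outline and starts from the same place as the paper's proof: discard all spheres with $k\neq 0,1$ via the projection lemma, then substitute the ruling parameterization \eqref{xyt} into the quartic Cygan-distance function of each remaining sphere and analyze its zero set on $[0,1]\times[\tfrac{5\pi}{6},\pi]$. The difference is in how the positivity is certified. Where you propose to prove a genuinely two-variable positivity of $g_I(a,s)$ by factoring out the degenerate vanishing loci (the whole edges $a=0$ and $s=\tfrac{5\pi}{6}$ in (i), the tangential zero at $(1,\pi)$ in (ii)), the paper sidesteps the two-variable problem entirely: it uses the affine convexity of Cygan balls --- their intersection with any affine line, in particular with each ruling segment, is an interval --- so that it suffices to check the sign of $\partial g/\partial a$ along the single edge $a=0$ (for $\partial_\infty I_0^{+}$) or $a=1$ (for $\partial_\infty I_1^{\star}$). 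Each claim thereby reduces to a one-variable trigonometric inequality in $s$, settled in Lemma~\ref{inequalityabouts} by the substitution $x=\sin s$ and an explicit polynomial factorization. For (iii) the paper does not work with the distance function at all, but exhibits a linear separating plane $-3(X+1+Y)+T=0$ in Heisenberg coordinates, with $E_B^l$ on one side and $\partial_\infty I_1^{+}$ strictly on the other. Your route should also succeed, but the two-variable positivity statements with non-transverse zero sets are considerably messier to certify rigorously than the paper's boundary derivatives; the convexity reduction is the labor-saving idea your plan lacks, and the separating plane is a cheaper certificate for (iii) than a uniform lower bound on $g_{I_1^{+}}$.
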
 

\begin{proof}
Let $p(s,a)=\big(x(s,a),y(s,a),t(s,a)\big)\in E_B^l$ be the point given by Equation (\ref{xyt}).
For \ref{caps0}, consider the Cygan distance  between $B^{-1}(q_{\infty})$ and $p(s,a)$. 
Let 
$$g(s,a)=d_{\textrm{Cyg}}^4\big(B^{-1}(q_{\infty}), p\big)=\big(x^2+(y-1)^2\big)^2+(t-2x)^2$$ 
with $a\in[0,1]$ and $s\in[\frac{5\pi}{6},\pi]$. 
Then
\begin{equation*}
    \begin{aligned}
        \frac{\partial g(s,a)}{\partial a}\Big |_{a=0}&=16\cos^{3/2}{(2s)}\big(\sin{\frac{s}{2}}-\cos{\frac{s}{2}}\big) +4\big(\sin{(3s)}-\cos{(4s)}+\sin{(s)}\big)-12\\
        &\ge 0,
    \end{aligned}
\end{equation*}
for $s\in[\frac{5\pi}{6},\pi]$ by Lemma \ref{inequalityabouts} later. 
By the affine convexity of the isometric spheres, we know that for $a\in[0,1]$ and $s\in[\frac{5\pi}{6},\pi]$,
$$d_{\textrm{Cyg}}\big(B^{-1}(q_{\infty}), p(s,a) \big)\ge d_{\textrm{Cyg}}\big(B^{-1}(q_{\infty}), p(s,0) \big),$$
and the equality holds if and only if $a=0$, which are points on $C_5$. 

For \ref{caps1*}, let $$h(s,a)=d_{\textrm{Cyg}}^4\big(AB^2A^{-1}(q_{\infty}), p(s,a) \big)=((x+2)^2+y^2)^2+(t-2y)^2,$$ where $a\in[0,1]$ and $s\in[\frac{5\pi}{6},\pi]$. 
By Lemma \ref{inequalityabouts} later, we have
\begin{equation*}
    \begin{aligned}
        \frac{\partial h(s,a)}{\partial a}\Big |_{a=1}=&\big(96\cos^3{\frac{s}{2}}+12\cos{\frac{s}{2}}+ 4\sin{\frac{s}{2}}\big)\sqrt{\cos{(2s)}}+ 52\cos{(s)}-28\sin{(s)} \\
        &\qquad+ 12\cos{(2s)}+4\cos{(3s)}- 8\sin{(2s)}+ 40
        \le 0,
    \end{aligned}
\end{equation*}  
for $s\in[\frac{5\pi}{6},\pi]$. 
By the affine convexity of the isometric spheres, we know that for $a\in[0,1]$ and $s\in[\frac{5\pi}{6},\pi]$,
$$d_{\textrm{Cyg}}\big(AB^2A^{-1}(q_{\infty}), p(s,a) \big)\ge d_{\textrm{Cyg}}\big(AB^2A^{-1}(q_{\infty}), p(s,1) \big)\ge1.$$
The first equality holds if and only if $a=1$, and the second equality holds if and only if $s=\pi$, corresponding to the point $x_2$. 

For \ref{caps1+}, consider the plane
$$ -3 (X + 1 + Y)+T=0$$ 
in the Heisenberg group. 
On the one hand, it is easy to check that $t-3(x + 1 + y)\le0$, and the equality holds if and only if $p(s,a)=x_2$.
On the other hand, 
    $$\partial_{\infty}I_1^{+}=\left\{\begin{bmatrix}
2i-e^{-\alpha i}\pm(2+i)\sqrt{2\cos{\alpha}}\cdot e^{(-\frac{\alpha}{2}+\beta)i}-\frac{5}{2} \\ 
-2+i\pm\sqrt{2\cos{\alpha}}\cdot e^{(-\frac{\alpha}{2}+\beta)i} \\ 
1\end{bmatrix}
: \ \alpha\in [-\frac{\pi}{2},\frac{\pi}{2}],\ \beta\in [0,\pi)\right\}.
$$
Assume $p_0\in \partial_{\infty}I_1^{+}$ with Heisenberg coordinates $(x_0,y_0,t_0)$.
Then 
\begin{equation*}
    \begin{aligned}
        -3(x_0 + 1 + y_0)+t_0&=\mp\sqrt{2\cos{\alpha}}\big(\cos{(\frac{\alpha}{2}-\beta)}+\sin{(\frac{\alpha}{2}-\beta)}\big)+2\sin{\alpha}+4\\
        &\geq -2\sqrt{\cos{\alpha}}+2\sin{\alpha}+4>0.
    \end{aligned}
\end{equation*}
Hence, $E_B^l$ and $\partial_{\infty}I_1^{+}$ are separated by the above plane.
\end{proof}

\begin{lem}\label{inequalityabouts}
Let 
\begin{equation*}
    \begin{aligned}
        f_1(s)&=16\cos^{3/2}{(2s)}\big(\sin{\frac{s}{2}}-\cos{\frac{s}{2}}\big) +4\big(\sin{(3s)}-\cos{(4s)}+\sin{(s)}\big)-12\\
    \text{and\quad}    
    f_2(s)&=\big(96\cos^3{\frac{s}{2}}+12\cos{\frac{s}{2}}+ 4\sin{\frac{s}{2}}\big)\sqrt{\cos{(2s)}}+ 52\cos{(s)}-28\sin{(s)} \\
        &\qquad\qquad\qquad\qquad\qquad\qquad+ 12\cos{(2s)}+4\cos{(3s)}- 8\sin{(2s)}+ 40
    \end{aligned}
\end{equation*}
be two functions.
For $s\in[\frac{5\pi}{6},\pi]$, we have
   \begin{enumerate}[(i)]
    \item \label{inequalityabouts:1}
    $f_1(s)\ge0$, and the equality holds if and only if $s=\frac{5\pi}{6}$ or $s=\pi$.
    \item \label{inequalityabouts:2}
    $f_2(s)\le0$, and the equality holds if and only if $s=\frac{5\pi}{6}$ or $s=\pi$.
   \end{enumerate}
\end{lem}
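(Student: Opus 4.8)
The plan is to treat the two inequalities by the same device---isolate the unique surviving square root, reduce to a polynomial inequality by squaring, and then factor out the roots forced by the boundary---but part (ii) is considerably heavier. First note that on $[\tfrac{5\pi}{6},\pi]$ one has $2s\in[\tfrac{5\pi}{3},2\pi]$, so $\cos(2s)\in[\tfrac12,1]$ and every radical is real and positive; moreover $\tfrac{s}{2}\in[\tfrac{5\pi}{12},\tfrac{\pi}{2}]$ gives $\sin\tfrac{s}{2}-\cos\tfrac{s}{2}>0$. A direct substitution shows $f_1(\tfrac{5\pi}{6})=f_1(\pi)=0$ and $f_2(\tfrac{5\pi}{6})=f_2(\pi)=0$, so in both cases the content is strictness on the open interval, and the endpoints must appear as the vanishing loci of the polynomials produced below.

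For part (i) I would substitute $t=\sin s$, which is monotone with $t\in[0,\tfrac12]$. Using the identity $\sin\tfrac{s}{2}-\cos\tfrac{s}{2}=\sqrt{1-\sin s}=\sqrt{1-t}$ together with $\cos(2s)=1-2t^2$, the whole first term collapses to a single radical and
\[
f_1=16\Bigl(\sqrt{(1-2t^2)^3(1-t)}-P(t)\Bigr),\qquad P(t)=2t^4+t^3-2t^2-t+1 .
\]
Since $P(t)=1-t(1-t^2)(2t+1)\ge 1-\tfrac34=\tfrac14>0$ on $[0,\tfrac12]$, the inequality $f_1\ge0$ is equivalent, after squaring, to $Q(t):=(1-2t^2)^3(1-t)-P(t)^2\ge0$. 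A direct expansion gives the factorisation
\[
Q(t)=t\,(1-2t)\,\bigl(2t^6-t^5+2t^3-2t^2-t+1\bigr),
\]
and the last factor equals $(1-t)(1-2t^2)-t^5(1-2t)\ge \tfrac14-\tfrac1{32}>0$ on $[0,\tfrac12]$. As $t(1-2t)\ge0$ there, vanishing exactly at $t=0,\tfrac12$ (that is $s=\pi,\tfrac{5\pi}{6}$), part (i) follows.

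Part (ii) follows the same script but with a second squaring. Here the obstruction to the substitution $t=\sin s$ is that $\cos\tfrac{s}{2}$ and $\sin\tfrac{s}{2}$ occur to odd order separately; so instead I would set $s=\pi-2\phi$ with $\phi\in[0,\tfrac{\pi}{12}]$, so that $\cos\tfrac{s}{2}=\sin\phi$, $\sin\tfrac{s}{2}=\cos\phi$, and the only radical becomes $\sqrt{\cos 4\phi}$ (with $\cos4\phi\in[\tfrac12,1]$). Writing $f_2=K\sqrt{\cos4\phi}+g$ with $K=96\sin^3\phi+12\sin\phi+4\cos\phi\ge0$ and $g$ a trigonometric polynomial, the claim $f_2\le0$ is $K\sqrt{\cos4\phi}\le -g$; after checking $g\le0$ one squares to $g^2-K^2\cos4\phi\ge0$. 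Putting $u=\sin\phi$, $w=\cos\phi=\sqrt{1-u^2}$ and collecting the part odd in $w$, this expression takes the form $E(u)+wF(u)$, so a further isolation and squaring reduces everything to a single-variable polynomial inequality $(1-u^2)F(u)^2-E(u)^2\ge0$ on $[0,\sin\tfrac{\pi}{12}]$. I would then divide out the boundary factors---$u$ (for $\phi=0$) and $2\cos4\phi-1=16u^4-16u^2+1$ (for $\phi=\tfrac{\pi}{12}$)---and show the remaining factor is positive by elementary coefficient bounds.

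The main obstacle is the bookkeeping in part (ii): the two squarings produce a polynomial of large degree, and before each one must certify the sign of the side being squared ($-g\ge0$ first, then the signs governing the step from $E+wF\ge0$ to $(1-u^2)F^2\ge E^2$) so that no extraneous roots are introduced. Establishing positivity of the residual factor after removing $u$ and $16u^4-16u^2+1$ is the real labour; by analogy with the clean bound for the degree-six factor in part (i), I expect it to admit an explicit positivity estimate, though a Sturm-sequence or resultant certificate could be used as a fallback.
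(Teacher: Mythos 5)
Your part (i) is complete and is essentially the paper's argument in a different guise: squaring against the positive polynomial $P(t)$ is the same as the paper's multiplication by the conjugate factor, the substitution $t=\sin s$ is the paper's $x=\sin s$, and your $Q(t)=t(1-2t)\bigl(2t^6-t^5+2t^3-2t^2-t+1\bigr)$ is exactly the paper's $F_1(x)/32$. Your decomposition $(1-t)(1-2t^2)-t^5(1-2t)\ge \tfrac14-\tfrac1{32}$ of the sextic factor is, if anything, cleaner than the paper's grouping. (Minor: for the squaring equivalence you only need $P\ge 0$, which follows from the one-line bound $t(1-t^2)(2t+1)\le 2t^2+t\le 1$; your stronger claim $\le \tfrac34$ is true but needs a monotonicity remark you don't supply.)

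Part (ii), however, is a plan rather than a proof, and the plan diverges from the paper's at the point where the real work happens. After your first squaring the quantity $g^2-K^2\cos 4\phi$ does have the form $E(u)+wF(u)$ with $w=\sqrt{1-u^2}$, but the direction of the second squaring depends on the signs of $E$ and $F$, which you have not determined: your target $(1-u^2)F(u)^2-E(u)^2\ge 0$ presupposes $E\le 0$ and $F\ge 0$, and if the signs come out otherwise the inequality to prove reverses. You also leave unverified both $g\le 0$ (needed before the first squaring) and the positivity of the residual polynomial after dividing out $u$ and $16u^4-16u^2+1$ --- and that residual has degree around $19$, since the double squaring pushes the total degree to roughly $24$. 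Saying you "expect" an elementary estimate, with Sturm sequences as a fallback, leaves the decisive step undone. The paper avoids the second squaring entirely: after one conjugate multiplication it is left with $32(x-\tfrac12)x\,G(x)$ where $G$ still contains $\sqrt{1-x^2}$, and it disposes of that radical by the polynomial lower bound $\sqrt{1-x^2}\ge 1-\tfrac{x^2}{2}-\tfrac{x^4}{6}$, reducing everything to a degree-six estimate that can be checked by hand. If you want to complete your route, you must either carry out the sign analysis and the high-degree positivity certificate explicitly, or adopt a Taylor-type bound for $w=\sqrt{1-u^2}$ in $E(u)+wF(u)$ to keep the degrees manageable, as the paper does.
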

\begin{proof} 
For \ref{inequalityabouts:1}, it is easy to see that $4\cos^{3/2}{(2s)}(\sin{\frac{s}{2}}-\cos{\frac{s}{2}})>0$ and $$-\sin{(3s)}+\cos{(4s)}-\sin{(s)}+3>0$$ when $s\in[\frac{5\pi}{6},\pi]$. 
Then 
    \begin{equation*}
    \begin{aligned}
        &\frac{f_1(s)}{2}\cdot\Big(4\cos^{3/2}{(2s)}(\sin{\frac{s}{2}}-\cos{\frac{s}{2}})+(-\sin{(3s)}+\cos{(4s)}-\sin{(s)}+3)\Big)\\
        &=-21 + 9\cos{(6s)} - 10\cos{(4s)} + 6\sin{(5s)} - 2\sin{(3s)} + 22\sin{(s)} \\
    &\qquad\qquad\qquad\qquad+ 23\cos{(2s)} - \cos{(8s)} - 2\sin{(7s)}.
    \end{aligned}
    \end{equation*}

Let $x=\sin{(s)}$, the above equation is written as:
$$F_1(x)=-32x(2x-1)(2x^6-x^5+2x^3-2x^2-x+1),$$
where $x\in[0,\frac{1}{2}]$.
Observe that $2x^6-x^5+2x^3=x^3(2x^3+2-x^2)>0$ and $1-2x^2-x> 0$ for $x\in (0,1/2)$. 
The roots of $F_1(x)$ in $[0,\frac{1}{2}]$ are only $0$ and $\frac{1}{2}$. 
Thus we have that $f_1(s)\ge0$, and the equality holds exactly when $s=\frac{5\pi}{6}$ or $s=\pi$. 

For \ref{inequalityabouts:2}, it is easy to see that $$(24\cos^3{\frac{s}{2}}+3\cos{\frac{s}{2}}+ \sin{\frac{s}{2}})\sqrt{\cos{(2s)}}>0$$ and $$13\cos{(s)}-7\sin{(s)}+4\cos{(2s)}+\cos{(3s)}- 2\sin{(2s)}+10<0$$ when $s\in[\frac{5\pi}{6},\pi]$. 
Then
\begin{equation*}
    \begin{aligned}
        &\frac{f_2(s)}{2}\cdot\Big((24\cos^3{\frac{s}{2}}+3\cos{\frac{s}{2}}+ \sin{\frac{s}{2}})\sqrt{\cos{(2s)}}\\
        &\qquad\qquad\qquad-\big(13\cos{(s)}-7\sin{(s)}+4\cos{(2s)}+\cos{(3s)}- 2\sin{(2s)}+10\big)\Big)\\
        &=-306 + 32\sin{(4s)} - \cos{(6s)} + 248\sin{(2s)} +95\cos{(4s)} \\ 
        &\qquad\qquad
        +4\sin{(5s)}+109\sin{(3s)}-268\cos{(s)} + 271\sin{(s)}\\
        &\qquad\qquad\qquad 
        +212\cos{(2s)} + 256\cos{(3s)} + 12\cos{(5s)}.
    \end{aligned}
    \end{equation*}

Let $x=\sin{(s)}$, the above equation is written as:
$$F_2(x)=32(x-\frac{1}{2})x\left((39-6x^2+5x)\sqrt{1-x^2} + x^4 + \frac{5x^3}{2}+\frac{47x^2}{2}-\frac{35x}{8}-\frac{309}{8}\right),$$
where $x\in[0,\frac{1}{2}]$.
Define
$$G(x)=(39-6x^2+5x)\sqrt{1-x^2} + x^4 + \frac{5x^3}{2}+\frac{47x^2}{2}-\frac{35x}{8}-\frac{309}{8}.$$ 
We claim that $G(x)>0$ for $x\in [0,\frac{1}{2}]$.
Note that
$\sqrt{1-x^2}\geq 1-\frac{x^2}{2}-\frac{x^4}{6}$ and $39-6x^2+5x>0$ for $x\in [0,\frac{1}{2}]$.
Then
\begin{equation*}
    \begin{aligned}
        G(x)&\geq (39-6x^2+5x)(1-\frac{x^2}{2}-\frac{x^4}{6})+x^4 + \frac{5x^3}{2}+\frac{47x^2}{2}-\frac{35x}{8}-\frac{309}{8}\\
        &=x^6- \frac{5}{6}x^5 -\frac{5}{2}x^4 - 2x^2 + \frac{5}{8}x + \frac{3}{8}\\
        &=(x-\frac{1}{2})^6+\frac{13}{6}x(x-\frac{1}{2})^4 -\frac{23}{12}x^4 -\frac{3}{4}x^3-\frac{89}{48}x^2 + \frac{65}{96}x + \frac{23}{64}\\
    \end{aligned}
\end{equation*}
It is easy to verify that 
$\frac{45}{96}x -\frac{23}{12}x^4 -\frac{3}{4}x^3\geq 0$ and $-\frac{89}{48}x^2 +\frac{20}{96}x+\frac{23}{64}\geq 0$ for $x\in [0,\frac{1}{2}]$.
Then we have $-\frac{23}{12}x^4 -\frac{3}{4}x^3-\frac{89}{48}x^2 + \frac{65}{96}x + \frac{23}{64}\geq 0$ for $x\in [0,\frac{1}{2}]$.
Now we have that $F_2(x)\le0$ and the roots of $F_2(x)$ in $[0,\frac{1}{2}]$ are only $0$ and $\frac{1}{2}$.
Hence $f_2(s)\le0$, and the equality holds exactly when $s=\frac{5\pi}{6}$ or $s=\pi$.
\end{proof}

Now, we have completed the proofs of \ref{EBdisk} and \ref{EBdisjoint} of Proposition \ref{prop:EB} below.
\begin{prop}\label{prop:EB} We have:
    \begin{enumerate}[(i)]
    \item \label{EBboundary} The union of $C_5$, $C_1$, $C_6$ and $C_2$ is a simple closed curve $C_B$.
    \item \label{EBdisk} $C_B$ bounds the disk $E_B$. 
    The interior of $E_B$ is disjoint from $\partial_{\infty}I_k^{*}\cup \partial_{\infty}I_k^{+}\cup \partial_{\infty}I_k^{-}$ for any $k \in \mathbb{Z}$.
    \item \label{EBdisjoint} $E_B \cap A^{k}(E_B)= \emptyset $ if $k \neq 0$. 
\end{enumerate}	
\end{prop}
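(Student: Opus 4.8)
The plan is to exploit the structure of the two halves: $E_B^l$ and $E_B^r$ are ruled \emph{triangles}. Indeed, along the ruling parameter, the $s=\pi$ ruling of each collapses onto the common segment $[x_1,x_2]\subset\mathcal N$, while the $s=\tfrac{5\pi}{6}$ ruling degenerates to a single point ($y_1$ for $E_B^l$, $AB^2(y_1)$ for $E_B^r$). Thus $E_B^l$ is a topological triangle with vertices $x_1,x_2,y_1$ and sides $C_5,C_1,[x_1,x_2]$, and $E_B^r$ one with vertices $x_1,x_2,AB^2(y_1)$ and sides $C_6,C_2,[x_1,x_2]$. For part \ref{EBboundary} I would first read off from the parametrizations that the four arcs join head-to-tail as $x_1\xrightarrow{C_5}y_1\xrightarrow{C_1}x_2\xrightarrow{C_2}AB^2(y_1)\xrightarrow{C_6}x_1$, so $C_B$ is a closed curve. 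Each arc lies on a distinct one of the four spinal spheres bounding the hole: $C_5\subset\partial_\infty I_0^+$, $C_1\subset\partial_\infty I_0^\star$, $C_2\subset\partial_\infty I_1^\star$, $C_6\subset\partial_\infty I_1^-$. Cyclically opposite arcs lie on disjoint spheres, since $\partial_\infty I_0^\star\cap\partial_\infty I_1^-=\emptyset$ directly and $\partial_\infty I_0^+\cap\partial_\infty I_1^\star=\emptyset$ after applying $A^{-1}$, both by Proposition \ref{prop:pair-disjoint4ppp}\ref{item:JkIk}; hence opposite arcs never meet. For cyclically adjacent arcs, which share one vertex, I would use the projection $\Pi$: the projection lemma above shows $\Pi(C_5)\cup\Pi(C_1)$ is simple with $\Pi(C_5)\cap\Pi(C_1)=\{\Pi(y_1),\Pi(x_1)\}$, and the only spurious coincidence $\Pi(x_1)=\Pi(x_2)=-1$ is resolved in $\mathcal N$ because $x_1=(-1,-2,0)$ and $x_2=(-1,0,0)$ differ in the vertical coordinate. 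Running this check at each corner yields that $C_B$ is a simple closed curve.

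For part \ref{EBdisk}, gluing the two ruled triangles along their common edge $[x_1,x_2]$ produces a topological disk with boundary $C_B$, so $C_B$ bounds $E_B$. Disjointness of the interior from all spinal spheres I would obtain, for the half $E_B^l$, directly from the two preceding lemmas: they give $E_B^l\cap\partial_\infty I_0^+=C_5$, $E_B^l\cap\partial_\infty I_0^\star=C_1$, $E_B^l\cap\partial_\infty I_1^\star=\{x_2\}$, $E_B^l\cap(\partial_\infty I_0^-\cup\partial_\infty I_1^-)=\{x_1\}$, $E_B^l\cap\partial_\infty I_1^+=\emptyset$, and (via $\Pi$) emptiness against every $\partial_\infty I_k^\pm,\partial_\infty I_k^\star$ with $k\ne 0,1$. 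Since all of these intersections lie on $\partial E_B^l$, the open triangle $\mathrm{int}\,E_B^l$ misses every spinal sphere. I would then obtain the same statement for $E_B^r$ by the Cygan-distance monotonicity argument in the ruling parameter $a$, whose sign reduces to trigonometric inequalities exactly of the type handled in Lemma \ref{inequalityabouts}, or, if available, by the symmetry of the hole interchanging the two halves.

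For part \ref{EBdisjoint}, the cleanest route is again projection. Because $\Pi$ is affine along rulings and both $\Pi(C_5),\Pi(C_1)$ (and likewise $\Pi(C_6),\Pi(C_2)$) have real part confined to a short interval about $-1$, the whole of $\Pi(E_B)$ lies in the lens $\Pi(\partial_\infty I_0^+)\cap\Pi(\partial_\infty I_1^+)$, i.e.\ in $\{\Re z\in[-\sqrt2,\ \sqrt2-2]\}$. Since $A$ acts on $\Pi$ by translation by $-2$ in the real direction, $\Pi(A^k E_B)$ lies in the $(-2k)$-translate of this interval; consecutive translates are separated by a gap of length $2-(2\sqrt2-2)>0$, so $\Pi(E_B)\cap\Pi(A^kE_B)=\emptyset$ for every $k\ne 0$, whence $E_B\cap A^k(E_B)=\emptyset$.

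I expect the main obstacle to be part \ref{EBdisk} for the second half $E_B^r$: the lemmas as stated control only $E_B^l$, and promoting them requires either a second, equally delicate round of the monotonicity-in-$a$ and trigonometric-inequality analysis of Lemma \ref{inequalityabouts}, or pinning down an explicit isometry of the configuration that carries $E_B^l$ to $E_B^r$ while permuting the bounding spinal spheres. A secondary subtlety worth flagging is that the simplicity of $C_B$ genuinely relies on the vertical coordinate to separate $x_1$ and $x_2$, since these two points are identified by $\Pi$; overlooking this would make the corner analysis appear to fail.
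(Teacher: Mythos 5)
Your strategy is essentially the paper's: for \ref{EBboundary}, containment of the four arcs in four spinal spheres whose pairwise intersections are controlled by Proposition \ref{prop:pair-disjoint4ppp}, reduced to explicit corner checks; for \ref{EBdisk}, the two lemmas preceding the proposition (projection control of $\Pi(E_B^l)$ plus Cygan-distance monotonicity in the ruling parameter $a$); for \ref{EBdisjoint}, the fact that $A$ acts on $\Pi(\mathcal{N})$ by translation by $-2$ while $\Pi(E_B)$ has real part confined to an interval of length $2\sqrt2-2<2$. A few points of comparison. The ``main obstacle'' you flag is genuine but is inherited from the paper: the lemmas treat only $E_B^l$, yet the paper declares \ref{EBdisk} and \ref{EBdisjoint} proved before stating the proposition; and there is no group element carrying $E_B^l$ to $E_B^r$ as ruled surfaces (one has $C_2=AB^2(C_1)$, but $AB^2(C_5)\neq C_6$ since $AB^2(x_1)\neq x_1$), so the intended completion really is a second, parallel round of the monotonicity and separating-plane computations. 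Your lens claim in \ref{EBdisjoint} is wrong for the right half: $\Pi(AB^2(y_1))=-2+\tfrac{\sqrt3}{2}-\tfrac{i}{2}$ is at distance $\sqrt3>\sqrt2$ from $-2+i$, so $\Pi(E_B^r)$ lies in $\Pi(\partial_\infty I_0^-)\cap\Pi(\partial_\infty I_1^-)$, not in $\Pi(\partial_\infty I_0^+)\cap\Pi(\partial_\infty I_1^+)$; since both lenses sit in the same vertical strip $-\sqrt2\le\Re z\le\sqrt2-2$, your translation argument still closes. For \ref{EBboundary} the paper reduces everything to the single direct computation $C_5\cap C_1=\{y_1\}$; your corner-by-corner check is actually the safer route, because the corner $C_2\cap C_6$ lies on $\partial_\infty I_1^{\star}\cap\partial_\infty I_1^{-}$, which is a whole Giraud disk rather than a tangency point, so that corner needs its own verification and the paper's ``only $C_5\cap C_1$'' quietly skips it. Finally, the interior of $E_B$ contains the open common edge from $x_1$ to $x_2$, so ``all intersections lie on $\partial E_B^l$'' is not quite sufficient for \ref{EBdisk}; one should also record that the open segment $\{(-1,t): -2<t<0\}$ misses every spinal sphere, an easy direct check that neither you nor the paper makes explicit.
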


\begin{proof} 
For \ref{EBboundary}, by Proposition \ref{prop:pair-disjoint4ppp},
we know that
$I^{\star}_{0}\cap I^{-}_{1}=\emptyset$,
$I^{\star}_{0}\cap I^{\star}_{1}=x_2$,
$I_{0}^{+}\cap I_{1}^{-}=x_1$,
and $I_{0}^{+}\cap I^{\star}_{1}=\emptyset$.
The curves $C_5$, $C_1$, $C_6$, and $C_2$ are contained in $I_{0}^{+}$, $I^{\star}_{0}$, $I^{\star}_{1}$ and $I^{-}_{1}$, respectively.
Therefore, we need only to check that $C_5\cap C_1=y_1$.
By the equations of $C_5$ and $C_1$, we have  $C_5 \cap C_1$ is given by 
\begin{equation*}
    \begin{bmatrix}
-\frac{1}{2}-e^{2si}-i\sqrt{2\cos{(2s)}}e^{\frac{2s-\pi}{4}i} \\ 
-\sqrt{2\cos{(2s)}}e^{\frac{2s-\pi}{4}i}+i \\ 
1\end{bmatrix}
=\begin{bmatrix}
-\frac{1}{2} \\ 
e^{s'} \\ 
1\end{bmatrix}
\end{equation*}
for some $s, s'\in [\frac{5\pi}{6},\pi]$.
This holds if and only if $s=s'=\frac{5\pi}{6}$, which corresponds to the point $y_1$. 
\end{proof}

Another topological circle $C_{B^{-1}}$ consisting of four arcs:
\begin{itemize}
    \item  Let $C_7$ be the arc in $s^{-}_0 \cap \partial \mathbf{H}^2_{\mathbb C}$ joining $y_2$ and $x_3$ given by 
    \begin{equation*}
        C_7=\left\{\begin{bmatrix}
        -\frac{1}{2}-e^{2si}+\sqrt{2\cos{(2s)}}e^{\frac{\pi+2s}{4}i} \\ 
        -i+i\sqrt{2\cos{(2s)}}e^{\frac{\pi+2s}{4}i} \\ 1\end{bmatrix}
        : \ s\in [0,\frac{\pi}{6}]\right\}.
    \end{equation*}
    \item Let $C_3$ be the $\mathbb{C}$-arc in $s^{\star}_0 \cap \partial \mathbf{H}^2_{\mathbb C}$ joining $x_2$ and $y_2$ given by
    \begin{equation*}
        C_3=\left\{\begin{bmatrix}
        -\frac{1}{2} \\ 
        -e^{si} \\ 
        1\end{bmatrix}
        : \ s\in [0,\frac{\pi}{6}]\right\}.
    \end{equation*}
    \item  Let $C_8$ be the arc in $s^{+}_1 \cap \partial \mathbf{H}^2_{\mathbb C}$ joining $x_3$ and $AB^2(y_2)$ given by 
    \begin{equation*}
        C_8=\left\{\begin{bmatrix}
        -\frac{5}{2}+2i-e^{2si}+(1-2i)\sqrt{2\cos{(2s)}}e^{\frac{\pi+2s}{4}i} \\ 
        -2+i-i\sqrt{2\cos{(2s)}}e^{\frac{\pi+2s}{4}i} \\ 
        1\end{bmatrix}
        : \ s\in [0,\frac{\pi}{6}]\right\}.
    \end{equation*}
    \item Let $C_4$ be the $\mathbb{C}$-arc in $s^{\star}_1 \cap \partial \mathbf{H}^2_{\mathbb C}$ joining $AB^2(y_2)$ and $x_2$ given by
    \begin{equation*}         
    C_4=\left\{\begin{bmatrix}         
    -\frac{5}{2} + 2e^{si} \\ 
    -2+e^{si} \\ 
    1 \end{bmatrix}
    : \ s\in [0,\frac{\pi}{6}]\right\}.
    \end{equation*}
\end{itemize}
Similarly, let $c_i$ represents the coordinates of $C_i$ in $\mathcal{N}$ for $i=3,4,7,8$.
Then construct two more ruled surfaces
\begin{equation*}
        E_{B^{-1}}^l=(1-a)\cdot c_8+a\cdot c_4,\quad
        E_{B^{-1}}^r=(1-a)\cdot c_7+a\cdot c_3
      \end{equation*} with 
$a\in[0,1]$.

Define $E_{B^{-1}}=E_{B^{-1}}^l\cup E_{B^{-1}}^r$.
We also have $E_{B^{-1}}^l\cap E_{B^{-1}}^r$ is the straight-line segment from $x_3$ to $x_2$ in $\mathcal{N}$.
In Figure \ref{figure:E1E2}, $E_{B^{-1}}^l$ is the cyan part, and $E_{B^{-1}}^r$ is the yellow part.
The proof of Proposition \ref{prop:EinverseB} is similar to the proof of Proposition \ref{prop:EB}, and we omit it. 
\begin{prop}\label{prop:EinverseB} We have:
    \begin{enumerate}[(i)]
    \item The union of $C_7$, $C_3$, $C_8$ and $C_4$ is a simple closed curve $C_{B^{-1}}$.
    \item $C_{B^{-1}}$ bounds the disk $E_{B^{-1}}\subset \partial  \mathbf{H}^2_{\mathbb C}$. 
    The interior of $E_{B^{-1}}$ is disjoint from  $\partial_{\infty}I_k^{+}\cup \partial_{\infty}I_k^{-}\cup\partial_{\infty}I_k^{*}$ for any $k\in \mathbb{Z}$.
    \item $E_{B^{-1}} \cap A^{k}(E_{B^{-1}})= \emptyset $ if $k \neq 0$. 
    \end{enumerate}	
\end{prop}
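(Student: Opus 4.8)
The plan is to avoid redoing the computations behind Proposition \ref{prop:EB} by exhibiting the anti-holomorphic involution $\iota$ of $\overline{\hc}$ (complex conjugation of homogeneous coordinates) as a symmetry carrying the entire $E_B$ configuration onto the $E_{B^{-1}}$ configuration, and then transporting the three conclusions of Proposition \ref{prop:EB} through $\iota$. First I would record the symmetries of $\iota$. On Heisenberg coordinates $\iota$ acts by $(z,t,u)\mapsto(\bar z,-t,u)$; in particular it fixes $q_\infty$, it is $\mathbb{R}$-affine on $\mathcal{N}=\mathbb{C}\times\mathbb{R}$, it preserves the level $u=0$ so it maps $\partial\hc$ to itself, and a short check shows it is a Cygan isometry. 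Using the centres and radii from Proposition \ref{prop:center-radius4ppp}, $\iota$ sends the centre $(-2k+{\rm i},-4k)$ of $I_k^{+}$ to $(-2k-{\rm i},4k)$, the centre of $I_k^{-}$, and fixes the centre $(-2k,0)$ of $I_k^{\star}$; since the radii agree, $\iota$ swaps $\partial_\infty I_k^{+}\leftrightarrow\partial_\infty I_k^{-}$ and fixes each $\partial_\infty I_k^{\star}$ setwise for every $k$. A direct matrix computation gives $\overline{B}=B^{-1}$, that is $\iota B\iota=B^{-1}$ (hence $\iota B^2\iota=B^2$ and $\iota A\iota=A$), so $\iota$ is genuinely the ``$B\leftrightarrow B^{-1}$'' symmetry of the Ford configuration.

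Next I would match the boundary arcs. Since $\iota$ fixes $x_2$ and interchanges $x_1\leftrightarrow x_3$ and $y_1\leftrightarrow y_2$, and since it carries the defining sphere of each arc to the defining sphere of its counterpart, one checks on the explicit parameterizations that $\iota(C_5)=C_7$, $\iota(C_1)=C_3$, $\iota(C_6)=C_8$ and $\iota(C_2)=C_4$, under the reparameterization $s\mapsto\pi-s$ carrying $[\frac{5\pi}{6},\pi]$ to $[0,\frac{\pi}{6}]$. For example, conjugating the first entry of $C_5$ and substituting $s'=\pi-s$ reproduces the first entry of $C_7$ after using the phase identity $i\,e^{(2s'-\pi){\rm i}/4}=e^{(\pi+2s'){\rm i}/4}$, while the $\mathbb{C}$-arcs are immediate from $\overline{e^{{\rm i}s}}=-e^{{\rm i}(\pi-s)}$. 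Because $\iota$ is $\mathbb{R}$-affine on $\mathcal{N}$, it maps straight segments to straight segments, hence it carries the ruled surfaces $E_B^l=(1-a)c_5+a\,c_1$ and $E_B^r=(1-a)c_6+a\,c_2$ onto $(1-a)c_7+a\,c_3=E_{B^{-1}}^r$ and $(1-a)c_8+a\,c_4=E_{B^{-1}}^l$, respectively. Therefore $\iota(C_B)=C_{B^{-1}}$ and $\iota(E_B)=E_{B^{-1}}$.

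With this in hand, all three assertions follow by transport. The image of a simple closed curve under the homeomorphism $\iota$ is a simple closed curve, giving part (i); and $\iota(E_B)=E_{B^{-1}}$ is a disk spanning it, lying in $\partial\hc$ since $\iota$ preserves $u=0$, giving the first half of part (ii). For the disjointness in part (ii), the interior of $E_{B^{-1}}=\iota(\operatorname{int}E_B)$ meets a spinal sphere $\partial_\infty I_k^{\bullet}$ exactly when $\operatorname{int}E_B$ meets $\iota(\partial_\infty I_k^{\bullet})$, which is again a member of the family $\{\partial_\infty I_k^{+},\partial_\infty I_k^{-},\partial_\infty I_k^{\star}\}_{k\in\mathbb{Z}}$; by part \ref{EBdisk} of Proposition \ref{prop:EB} this intersection is empty. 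Finally, since $\iota A=A\iota$, we obtain $E_{B^{-1}}\cap A^{k}(E_{B^{-1}})=\iota\bigl(E_B\cap A^{k}(E_B)\bigr)=\emptyset$ for $k\neq0$ from part \ref{EBdisjoint} of Proposition \ref{prop:EB}, which is part (iii).

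The step needing genuine care is the arc matching: one must verify that $\iota$ respects not merely the endpoints but the full parameterized arcs, including the orientation-reversing substitution $s\mapsto\pi-s$ and the phase identity relating the two families. Once this bookkeeping is carried out, the advantage of the symmetry approach is that it sidesteps entirely the trigonometric inequalities (the analogue of Lemma \ref{inequalityabouts} on $[0,\frac{\pi}{6}]$) and the projection-convexity estimates that a direct repetition of the proof of Proposition \ref{prop:EB} would require.
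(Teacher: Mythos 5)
Your proof is correct, but it takes a genuinely different route from the paper, which simply declares the argument ``similar to the proof of Proposition \ref{prop:EB}'' and omits it --- i.e.\ the intended proof is a verbatim repetition of the projection--convexity estimates and the trigonometric inequalities of Lemma \ref{inequalityabouts}, now over $[0,\frac{\pi}{6}]$. Your symmetry argument replaces all of that analysis by the single algebraic identity $\overline{B}=B^{-1}$ (which indeed holds: $B\overline{B}=I$ by direct multiplication, and $\overline{A}=A$), together with the observation that entrywise conjugation $\iota$ acts on horospherical coordinates by $(z,t,u)\mapsto(\bar z,-t,u)$, is an extended Cygan isometry fixing $q_\infty$, and therefore sends $I(g)$ to $I(\iota g\iota)$; this swaps $I_k^{+}\leftrightarrow I_k^{-}$ and fixes each $I_k^{\star}$, matching Proposition \ref{prop:center-radius4ppp}. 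The arc-matching is the only place where care is genuinely required, and your bookkeeping is right: under $s\mapsto\pi-s$ the phase identity $i\,e^{(2s-\pi){\rm i}/4}=e^{(\pi+2s){\rm i}/4}$ and $\overline{e^{{\rm i}s}}=-e^{{\rm i}(\pi-s)}$ give $\iota(C_5)=C_7$, $\iota(C_1)=C_3$, $\iota(C_6)=C_8$, $\iota(C_2)=C_4$ exactly, and since $\iota$ is $\mathbb{R}$-linear on $\mathcal{N}$ it carries the ruled surfaces $E_B^l, E_B^r$ onto $E_{B^{-1}}^r, E_{B^{-1}}^l$. Transporting the three items of Proposition \ref{prop:EB} through the homeomorphism $\iota$ (using $\iota A=A\iota$ for item (iii)) is then immediate. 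What your approach buys is a proof that is shorter, less error-prone, and conceptually explains \emph{why} the two configurations behave identically; what the paper's approach buys is nothing here beyond self-containment, so your argument is arguably the better one to record. One small presentational point: it would be worth stating explicitly that $\iota$ normalizes the family $\{\partial_\infty I_k^{+},\partial_\infty I_k^{-},\partial_\infty I_k^{\star}\}_{k\in\mathbb{Z}}$ as a set before invoking item \ref{EBdisk} of Proposition \ref{prop:EB}, since that is the step where the swap $+\leftrightarrow-$ is silently absorbed.
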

From the equations of $E_B$ and $E_{B^{-1}}$, we have
\begin{prop}\label{prop:EBEinverseB} 
The disk $E_{B}$ intersects the disk $E_{B^{-1}}$ exactly at the point $x_2$. 
\end{prop}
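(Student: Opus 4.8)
The plan is to use the vertical Heisenberg coordinate $t$ as a separating function, exploiting the anti-holomorphic symmetry of the whole configuration. Recall that complex conjugation $\iota$ acts on horospherical coordinates by $\iota(z,t,u)=(\bar z,-t,u)$, so on $\mathcal N$ it is the linear map $(x,y,t)\mapsto(x,-y,-t)$. From Proposition~\ref{prop:center-radius4ppp} one reads off that $\iota$ fixes each $I_k^{\star}$ and interchanges $I_k^{+}$ with $I_k^{-}$, so $\iota$ preserves the entire family of isometric spheres; moreover $\iota(x_1)=x_3$, $\iota(x_2)=x_2$ and $\iota(y_1)=y_2$.

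First I would establish that $\iota(E_B)=E_{B^{-1}}$. Since $\iota$ is linear on $\mathcal N$ it carries affine segments to affine segments, hence ruled surfaces to ruled surfaces, so it is enough to match directrices. Comparing the explicit formulas one checks $\iota(C_1)=C_3$, $\iota(C_5)=C_7$, $\iota(C_6)=C_8$ and $\iota(C_2)=C_4$, each up to the reparametrization $s\mapsto\pi-s$; thus $\iota$ interchanges $E_B^l\leftrightarrow E_{B^{-1}}^r$ and $E_B^r\leftrightarrow E_{B^{-1}}^l$, giving $\iota(E_B)=E_{B^{-1}}$. Because $\iota$ reverses the sign of $t$, it will then suffice to prove the single statement that $E_B\subset\{t\le 0\}$ with $E_B\cap\{t=0\}=C_1$; applying $\iota$ immediately yields $E_{B^{-1}}\subset\{t\ge0\}$ with $E_{B^{-1}}\cap\{t=0\}=C_3$.

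The core step is locating $E_B$ relative to the hyperplane $\{t=0\}$, and this is where the real work lies. On $E_B^l$ the coordinate $t$ is given by (\ref{xyt}) as
$$t=-2\,g(s)\,(1-a),\qquad g(s)=\sin(2s)+\sqrt{2\cos(2s)}\,\sin\tfrac{2s+\pi}{4},$$
so I must show $g(s)\ge 0$ on $[\frac{5\pi}{6},\pi]$, with equality only at $s=\frac{5\pi}{6}$. This is a one-variable trigonometric inequality of exactly the type handled in Lemma~\ref{inequalityabouts}, provable after the substitution $x=\sin s$ and clearing the square root. It follows that $t\le0$ on $E_B^l$, with $t=0$ precisely when $a=1$ (points of $C_1$) or $s=\frac{5\pi}{6}$ (the degenerate segment $y_1\in C_1$); hence $E_B^l\cap\{t=0\}=C_1$. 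For $E_B^r$ the affine structure gives $t=(1-a)\,t_{C_6}(s)+a\,t_{C_2}(s)$, where a direct computation yields $t_{C_2}(s)=-4\sin s\le0$, vanishing only at $s=\pi$ (the point $x_2$), and $t_{C_6}(s)<0$ throughout $[\frac{5\pi}{6},\pi]$, again by a one-variable estimate. Since both coefficients are $\le 0$ and $t_{C_6}$ is strictly negative, $t<0$ on $E_B^r$ except at $x_2$. Combining the two halves gives $E_B\subset\{t\le0\}$ and $E_B\cap\{t=0\}=C_1$.

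Finally, by the symmetry of the previous paragraph, $E_{B^{-1}}\subset\{t\ge0\}$ and $E_{B^{-1}}\cap\{t=0\}=C_3$. Any point of $E_B\cap E_{B^{-1}}$ then lies in $\{t\le0\}\cap\{t\ge0\}=\{t=0\}$, hence in $C_1\cap C_3$. Writing $C_1=\{(e^{is},0):s\in[\frac{5\pi}{6},\pi]\}$ and $C_3=\{(-e^{is},0):s\in[0,\frac{\pi}{6}]\}$ in Heisenberg coordinates, both lie on the circle $\{|z|=1,\ t=0\}$ and their argument ranges $[\frac{5\pi}{6},\pi]$ and $[\pi,\frac{7\pi}{6}]$ overlap only at $z=-1$, so $C_1\cap C_3=\{x_2\}$. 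As $x_2$ is a common vertex of both disks, $E_B\cap E_{B^{-1}}=\{x_2\}$, as claimed. The main obstacle is thus the pair of one-variable inequalities pinning $E_B$ to $\{t\le0\}$ with contact exactly along $C_1$; once the symmetry $\iota(E_B)=E_{B^{-1}}$ is verified, the rest is formal.
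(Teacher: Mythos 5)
Your argument is correct, and it is considerably more substantial than what the paper offers: the paper disposes of Proposition \ref{prop:EBEinverseB} with the single sentence ``From the equations of $E_B$ and $E_{B^{-1}}$, we have\dots'', i.e.\ it treats the claim as a direct computation and records no proof. Your route supplies an actual mechanism. The two ingredients both check out. First, the symmetry: $\iota$ acts on $\mathcal{N}$ by $(x,y,t)\mapsto(x,-y,-t)$, it swaps $I_k^{+}$ with $I_k^{-}$ and fixes $I_k^{\star}$ (consistent with Proposition \ref{prop:center-radius4ppp}), and the directrix identifications $\iota(C_1)=C_3$, $\iota(C_5)=C_7$, $\iota(C_2)=C_4$, $\iota(C_6)=C_8$ under $s\mapsto\pi-s$ are verified by the explicit formulas; since $\iota$ is affine on $\mathcal{N}$ and the reparametrization is the same on both directrices of each ruled piece, $\iota(E_B)=E_{B^{-1}}$ follows. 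Second, the separation: from (\ref{xyt}) the $t$-coordinate on $E_B^{l}$ is $-2g(s)(1-a)$ with $g(s)=\sin(2s)+\sqrt{2\cos(2s)}\sin\frac{2s+\pi}{4}$, and the inequality $g\ge 0$ on $[\frac{5\pi}{6},\pi]$ (equality only at $s=\frac{5\pi}{6}$) reduces after squaring to $\cos(2s)\bigl(1+\sin(s)\bigr)\ge\sin^2(2s)$, which holds with equality exactly at $s=\frac{5\pi}{6}$; likewise $t_{C_2}(s)=-4\sin s\le 0$ and $t_{C_6}<0$ are elementary estimates of the same flavour as Lemma \ref{inequalityabouts}. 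This pins $E_B$ in $\{t\le 0\}$ with contact set $C_1$, hence $E_{B^{-1}}$ in $\{t\ge 0\}$ with contact set $C_3$, and $C_1\cap C_3=\{x_2\}$ is immediate from the argument ranges on the unit circle. What your approach buys is a conceptual reason for the tangency (the disks lie on opposite sides of the hyperplane $\{t=0\}$, meeting it only along $C_1$ and $C_3$), at the cost of two more one-variable trigonometric inequalities; you should state and prove those two inequalities explicitly, in the style of Lemma \ref{inequalityabouts}, rather than only asserting that they are ``of the type handled'' there, since the entire separation argument rests on them.
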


\begin{prop}\label{prop:unknot} There is an unknotted infinite line $\mathcal{L}$ in $\partial  \mathbf{H}^2_{\mathbb C}$ such that:
    \begin{enumerate}[(i)]
	\item  \label{unknot:1} 
    $\mathcal{L} \cap \big(\mathcal{S} \cup_{k \in \mathbb{Z}}A^{k}(E_B \cup E_{B^{-1}})\big)$  is exactly the set $\cup_{k \in \mathbb{Z}}A^{k}(x_2)$.
	\item \label{unknot:2} 
    The complement of $\mathcal{S} \cup_{k \in \mathbb{Z}}A^{k}(E_B \cup E_{B^{-1}})$
    in  $\partial\mathbf{H}^2_{\mathbb C}$ is an $A$-invariant $(\mathbb{R}^2-\mathbb{D}^2)\times\mathbb{R}$ topologically. 
    \end{enumerate}	
\end{prop}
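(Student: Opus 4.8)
The plan is to take $\mathcal{L}$ to be the real axis $\{(x,0,0):x\in\mathbb{R}\}\subset\mathcal{N}$, that is, the $\mathbb{R}$-circle $(x,0)$ with $t_0=0$. By Proposition \ref{prop:A-action} this circle is $A$-invariant, and since $x_2=(-1,0)$ in Heisenberg coordinates while $A$ acts on $\mathcal{L}$ by $x\mapsto x-2$ (Equation (\ref{eq-A})), the orbit $\cup_k A^k(x_2)$ is exactly the set of points $(-1-2k,0)$, $k\in\mathbb{Z}$, i.e.\ the odd integers on $\mathcal{L}$. As $\mathcal{L}$ is a straight line through $q_\infty$, it is an unknotted properly embedded line in $\mathcal{N}\cong\mathbb{R}^3$. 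It then remains to verify (i) and (ii).

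For (i), I would first locate $\mathcal{L}\cap\mathcal{S}$. By Proposition \ref{prop:center-radius4ppp} the spinal sphere $\partial_\infty I_k^\star$ has center $(-2k,0)$ and radius $1$, and substituting a point $(x,0,0)$ into the Cygan-sphere equation shows it meets $\mathcal{L}$ exactly at $(-2k\pm1,0)$, with the open chord $(-2k-1,-2k+1)\times\{0\}$ lying in the interior of the ball it bounds. As $k$ ranges over $\mathbb{Z}$ these chords cover $\mathcal{L}$ with breaks precisely at the odd integers, which by Proposition \ref{prop:pair-disjoint4ppp} are the tangency points $A^k(x_2)$ of $\partial_\infty I_k^\star$ and $\partial_\infty I_{k+1}^\star$. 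Hence $\mathcal{L}$ lies in the union of the $I_k^\star$-balls and meets $\mathcal{S}$ only at these points. Next I would show the disks meet $\mathcal{L}$ only there: setting $y=0$ and $t=0$ in the parametrization (\ref{xyt}) of $E_B^l$ forces either $a=1$, where the point lies on the $\mathbb{C}$-arc $C_1$ and $y=0$ only at $s=\pi$, or $s=\tfrac{5\pi}{6}$, where $y=\tfrac12\neq0$; in either case the only solution on $\mathcal{L}$ is $x_2$. Running the same computation for $E_B^r$ and $E_{B^{-1}}$ and invoking $A$-equivariance shows each $A^k(E_B\cup E_{B^{-1}})$ meets $\mathcal{L}$ only at $A^k(x_2)$, which together with the previous step gives the intersection claimed in (i).

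For (ii), the guiding idea is that $\mathcal{L}$ is the deleted core of the complementary solid region, so that $X:=\partial_\infty D_\Sigma\setminus\cup_k A^k(E_B\cup E_{B^{-1}})$ should be homeomorphic to $\mathcal{N}\setminus\mathcal{L}\cong(\mathbb{R}^2-\mathbb{D}^2)\times\mathbb{R}$. I would exploit the $\langle A\rangle$-symmetry to reduce to a single period: the disks cut $X$ into translates of one block $\mathcal{F}$, bounded on two sides by $E_B\cup E_{B^{-1}}$ and its $A$-image and elsewhere by the parts of the faces $s_k^{\pm},s_k^{\star}$ analyzed in Propositions \ref{prop:4ppp3face0plus}--\ref{prop:4ppp3face0star}. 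Using that the interiors of $E_B,E_{B^{-1}}$ avoid all spinal spheres (Propositions \ref{prop:EB} and \ref{prop:EinverseB}) and that $E_B\cap E_{B^{-1}}=\{x_2\}$ (Proposition \ref{prop:EBEinverseB}), I would argue that $\mathcal{F}$ is a $3$-ball whose boundary splits into the two capping disks and the parts of $\mathcal{S}$ between them. Stacking the blocks along $\mathcal{L}$, with each disk $A^k(E_B\cup E_{B^{-1}})$ glued to its successor, then assembles $X$ into $(\mathbb{R}^2-\mathbb{D}^2)\times\mathbb{R}$, $A$ acting by translation in the $\mathbb{R}$-factor.

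The hard part will be the topological bookkeeping at the tangency points $A^k(x_2)$. There the balls bounded by $\partial_\infty I_k^\star$ and $\partial_\infty I_{k+1}^\star$ meet only at a single point, so the chain of spinal balls is pinched, and it is precisely the disks $E_B,E_{B^{-1}}$ through $x_2$ that resolve this pinch. I expect the crux to be verifying that the fundamental block $\mathcal{F}$ really is a $3$-cell -- that the faces $s_k^{\pm},s_k^{\star}$, their Giraud-disk ridges, and the two capping disks fit together as the boundary of a ball -- and that no handle of the infinite-genus surface survives the cutting, so that the complement is a genuine product rather than a region with incompressible boundary. This is where the combinatorics of Section \ref{section:ford} and the affine-convexity estimates for the projections $\Pi(E_B^l)$ must be combined.
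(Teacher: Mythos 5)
Your choice of $\mathcal{L}$ and your treatment of part (i) agree with the paper: the paper disposes of item (i) with the words ``can be checked directly'', and your computation (the chords of $\mathcal{L}$ lying inside the tangent chain of balls bounded by the $\partial_\infty I_k^{\star}$, together with the substitution $y=t=0$ into the parametrization (\ref{xyt}) of the ruled surfaces) is a correct way of carrying out that check.

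Part (ii), however, contains a genuine gap. The disks $A^k(E_B\cup E_{B^{-1}})$ do \emph{not} cut $X=\partial_{\infty}D_{\Sigma}\setminus\cup_k A^k(E_B\cup E_{B^{-1}})$ into blocks ``bounded on two sides by $E_B\cup E_{B^{-1}}$ and its $A$-image''. Each of these disks is compact, with boundary on the singular surface $\mathcal{S}$ and interior in the unbounded exterior region; it is a compressing disk for one handle of $\mathcal{S}$ (its boundary is a meridian of one ``hole'' of the chain), so cutting along it reduces genus by one but does not disconnect anything: one passes from one period to the next along a path far from the chain of spinal spheres, which meets no $A^k(E_B\cup E_{B^{-1}})$. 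Your own assembly also betrays the problem: an infinite stack of $3$-balls glued successively along boundary disks is simply connected, whereas $(\mathbb{R}^2-\mathbb{D}^2)\times\mathbb{R}$ has fundamental group $\mathbb{Z}$. The surfaces that do separate a fundamental slab for the $A$-action are the \emph{once-punctured} (hence unbounded, reaching $q_{\infty}$) disks $D_{\pm}$ of Proposition \ref{prop:diskDminus}, which the paper introduces only afterwards and whose role is different from that of $E_B$ and $E_{B^{-1}}$. The paper's actual proof of (ii) is a regular-neighborhood argument: the complement of $\mathcal{S}\cup_k A^k(E_B\cup E_{B^{-1}})$ is $A$-equivariantly isotopic to the complement of a small neighborhood of $\mathcal{L}\cup_k A^k(E_B\cup E_{B^{-1}})$; since each disk meets $\mathcal{L}$ in the single point $A^k(x_2)$ by part (i), this complex collapses onto the unknotted line $\mathcal{L}$, whose tubular-neighborhood complement in $\mathcal{N}\cong\mathbb{R}^3$ is $(\mathbb{R}^2-\mathbb{D}^2)\times\mathbb{R}$. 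To repair your argument you should either adopt this collapsing argument, or introduce the unbounded separating disks $D_{\pm}$ already at this stage and prove that they, not the capping disks, bound the fundamental block.
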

\begin{proof}Let $\mathcal{L}$ be the
line 
\begin{equation*}
	\{[x,0]: x\in \mathbb{R}\} 
\end{equation*} 
in $\mathcal{N}$. 
Then Item \ref{unknot:1} 
can be checked directly.
 
Now, the complement of $\mathcal{S} \cup_{k \in \mathbb{Z}}A^{k}(E_B \cup E_{B^{-1}})$ in  $\partial  \mathbf{H}^2_{\mathbb C}$ 
is isotopic to the boundary of a small neighborhood of $\mathcal{L} \cup_{k \in \mathbb{Z}}A^{k}(E_B \cup E_{B^{-1}})$ in $A$-equivalent way,  
so it is a $A$-invariant $(\mathbb{R}^2-\mathbb{D}^2) \times  \mathbb{R}$  topologically.
\end{proof}
	
\begin{figure}[htbp]
	\begin{center}
		\begin{tikzpicture}
		\node at (0,0) {\includegraphics[width=11.5cm,height=8.0cm]{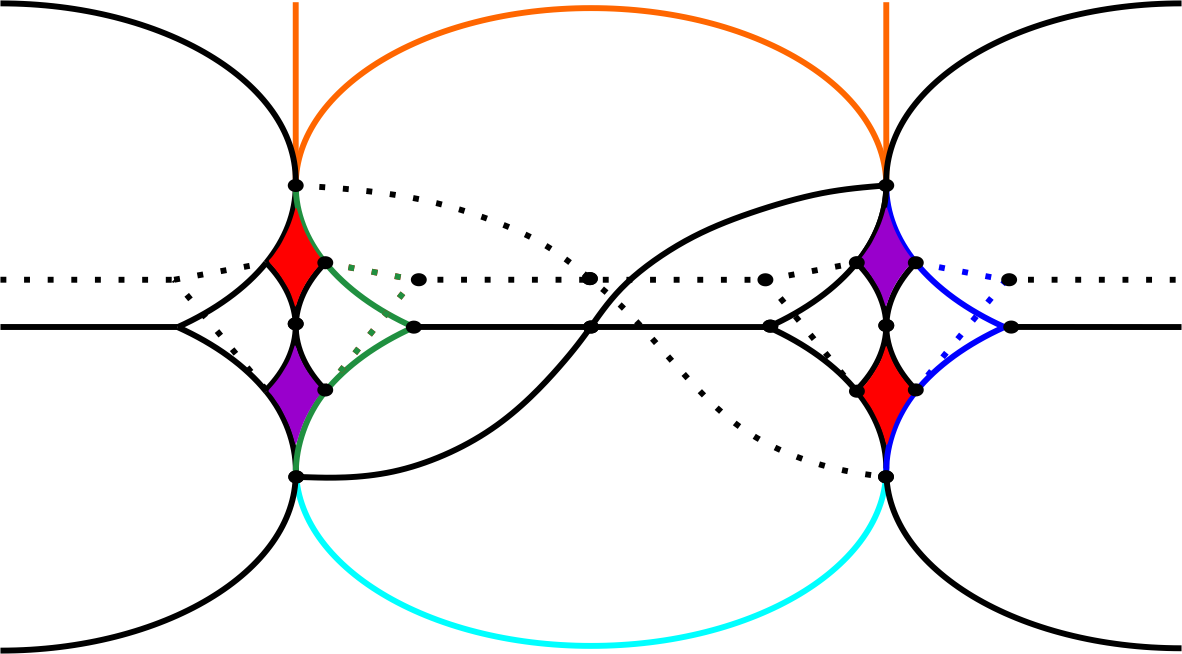}};

		\node at (0.2,2.8){\tiny $B$};
		\node at (0.2,-2.8){\tiny $B^{-1}$};
		
		\node at (4.8,-2.8){\tiny $ABA^{-1}$};
		\node at (4.8,2.8){\tiny $AB^{-1}A^{-1}$};
		
		\node at (-4.8,-2.8){\tiny $A^{-1}BA$};
		\node at (-4.8,2.8){\tiny $A^{-1}B^{-1}A$};

		\node at (-2.31,3.6){\large $F$};
		
		\node at (3.14,1.72){\tiny $x_1$};
		\node at (3.15,-0.0){\tiny $x_2$};
		\node at (3.14,-1.83){\tiny $x_3$};

		
		\node at (-3.5,1.72){\tiny $A^{-1}(x_3)$};
		\node at (-5,-0.97){\tiny $A^{-1}(x_2)$};
            \draw[->] (-2.96,-0.05)--(-4.82,-0.85);
		\node at (-3.5,-1.83){\tiny $A^{-1}(x_1)$};
		
      

		
		\node at (2.41,0.91){\tiny $y_1$};
		\node at (2.41,-0.92){\tiny $y_2$};
		\node at (-2.11,0.98){\tiny $B^2(y_2)$};
		\node at (-2.1,-0.9){\tiny $B^2(y_1)$};
		\node at (3.7,1.05){\tiny $AB^2(y_1)$};
		\node at (3.75,-0.85){\tiny $AB^2(y_2)$};
		\node at (1,1.8){\tiny $B^{-1}(y_2)$};
        \draw[->] (0.1,0.7)--(0.83,1.63);
		\node at (0.25,-0.25){\tiny $B(y_1)$};
		
		\end{tikzpicture}
	\end{center}
	\caption{The topology of the Ford domain of $\Delta_{4,\infty,\infty;\infty}$. After capping off the holes in Figure \ref{figure:4pppford} by red and purples disks in Figure \ref{figure:4pppfordtop}, we need two once punctured disks $D_{-}$ (with green boundary) and $D_{+}$ (with blue boundary), and a triangle $F$ (with  orange boundary) in Figure \ref{figure:4pppfordtop}. Since there are too many vertices, we do not label vertices $f(w_i)$ for $f \in \Sigma$ and $i=1,2,3,4$. They are the same as in Figure \ref{figure:4pppfordabstract}.}
	\label{figure:4pppfordtop}
\end{figure}

\subsection{Three more disks and seventeen more arcs}\label{subsection:moredisks}
More disks are used to cut out a 3-ball in $\partial_{\infty} D_{\Sigma}$. 
The quotient space of this 3-ball by suitable side-parings is the 3-manifold at infinity in Theorem \ref{thm:4pp}.

\begin{itemize}
\item Let $C_{17}$ be the $\mathbb{C}$-arc in $s^{+}_0 \cap \partial \mathbf{H}^2_{\mathbb C}$ joining 
$x_1$ and $B^{-1}(x_3)$ given by 
\begin{equation*}
    C_{17}=\left\{\begin{bmatrix}
-\frac{3}{2}-i\sqrt{2}e^{is} \\ 
i-\sqrt{2}e^{is} \\ 
1\end{bmatrix}
: \ s\in [\frac{3\pi}{4},\frac{9\pi}{4}]\right\}.
\end{equation*}
\item Let $C_{19}$  be the $\mathbb{C}$-arc in $\partial \mathbf{H}^2_{\mathbb C}$ joining 
$q_{\infty}$ and $B^{-1}(x_3)$ given by 
\begin{equation*}
    C_{19}=\left\{\begin{bmatrix}
-\frac{1}{2}+si \\ 
1 \\ 
1\end{bmatrix}
: \ s\in [2,\infty]\right\}.
\end{equation*}
\item Let $C_{20}$  be the $\mathbb{C}$-arc in $\partial \mathbf{H}^2_{\mathbb C}$ joining 
$x_1$ and $q_{\infty}$ given by 
\begin{equation*}
    C_{20}=\left\{\begin{bmatrix}
-\frac{1}{2}-si \\ 
-1 \\ 
1\end{bmatrix}
: \ s\in [2,\infty]\right\}.
\end{equation*}
\end{itemize}

Now, we construct a disk $F$.
First, take vertical upward rays (part of $\mathbb{C}$-arcs) along the first half of $C_{17}$, with paramatrization
$$[-\sqrt{2}\cos{(s)}, 1-\sqrt{2}\sin(s), -2\sqrt{2}\cos{(s)}+a]$$
in Heisenberg coordinates with 
$s\in [\frac{3\pi}{4},\frac{3\pi}{2}]$ and $ a\in [0,+\infty]$. 
Denote this part by $F_1$.
Next, take vertical downward rays (part of the $\mathbb{C}$-arcs) along the second half of $C_{17}$,
 with paramatrization 
$$[-\sqrt{2}\cos{(s)}, 1-\sqrt{2}\sin(s), -2\sqrt{2}\cos{(s)}+a]$$ in Heisenberg coordinates with $s\in [\frac{3\pi}{2},\frac{9\pi}{4}]$ and $ a\in [0,-\infty]$. 
 Denote this part by $F_2$. 
Finally, take a vertical half-plane $F_3$, with paramatrization 
$$[0, (1+\sqrt{2})s, t]$$ in Heisenberg coordinates with $s\in[1+\infty]$ and $t\in [-\infty,+\infty]$. 
Let $$C_F=C_{17}\cup C_{19}\cup C_{20} \quad \text{and}  \quad F=F_1\cup F_2\cup F_3.$$
They are shown in Figure \ref{figure:F123}.
We draw a part of $F$ together with several spinal spheres in Figure \ref{figure:disk_F}.
Using the previous method similarly, we give the following results.
\begin{figure}[tbp]
\centering
\begin{minipage}{.5\textwidth}
  \centering
  {\includegraphics[width=6.5cm,height=5.5cm]
  {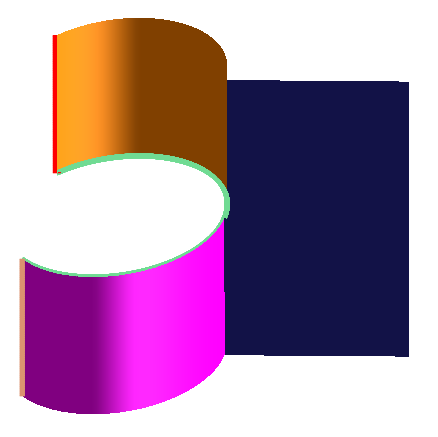}}
  \caption{Disks $F_1$ (coral),  $F_2$ (magenta)
    and $F_3$ (blue), curves $C_{17}$ (green), $C_{19}$ (red) and  $C_{20}$ (tan).}
  \label{figure:F123}
\end{minipage}%
\hfill
\begin{minipage}{.5\textwidth}
  \centering
  {\includegraphics[width=7cm,height=5.8cm]
  {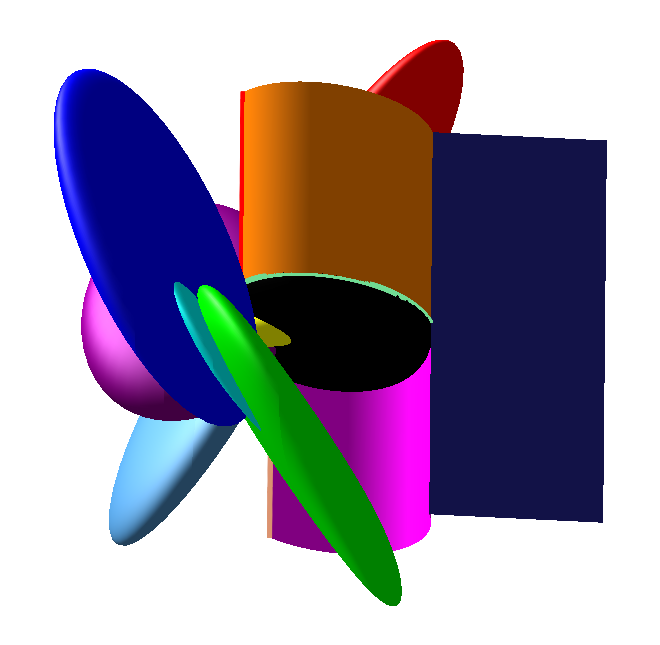}}
  \caption{Add part of the disk $F$ to Figure \ref{figure:4pppford}.}
  \label{figure:disk_F}
\end{minipage}
\end{figure}

\begin{prop}\label{prop:diskF} We have:
    \begin{enumerate}[(i)]
	\item \label{diskF1} The union $C_F$ is a simple closed curve.
	\item \label{diskF2} $C_F$ bounds the disk $F \subset \partial\mathbf{H}^2_{\mathbb C}$.
    The interior of $F$ has no intersection with $\partial_{\infty}I_k^{+}\cup \partial_{\infty}I_k^{-}\cup\partial_{\infty}I_k^{*}$ for any $k \in \mathbb{Z}$. 
    \end{enumerate}	
\end{prop}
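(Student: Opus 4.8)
The plan is to treat the two assertions in turn, reusing the projection technique from the analysis of $E_B$ and $E_{B^{-1}}$: everything is organized by the projection $\Pi:\mathcal{N}\to\mathbb{C}$ to the $z$-coordinate, under which the three building blocks $F_1,F_2,F_3$ collapse to one-dimensional sets, because each is a union of vertical fibres (or, for $F_3$, a vertical half-plane).

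For \ref{diskF1}, I would first locate the endpoints by direct substitution: $C_{17}$ runs between $B^{-1}(x_3)$ (at $s=\frac{3\pi}{4}$) and $x_1$ (at $s=\frac{9\pi}{4}$), the vertical ray $C_{19}$ joins $B^{-1}(x_3)$ to $q_{\infty}$, and $C_{20}$ joins $x_1$ to $q_{\infty}$, so the three arcs close up into a loop with vertices $x_1$, $B^{-1}(x_3)$, $q_{\infty}$. Each arc is embedded: $C_{17}$ is three quarters (parameter length $\frac{3\pi}{2}<2\pi$) of the $\mathbb{C}$-circle $|z-i|=\sqrt2$, while $C_{19}$ and $C_{20}$ are the vertical rays over the single points $z=1$ and $z=-1$. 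For disjointness away from the shared vertices, I use that the $z$-coordinate $i-\sqrt2 e^{is}$ of $C_{17}$ equals $1$ only at $s=\frac{3\pi}{4}$ and equals $-1$ only at $s=\frac{9\pi}{4}$; hence $C_{17}$ meets the vertical lines carrying $C_{19}$, $C_{20}$ only at the endpoints, and $C_{19}$, $C_{20}$ (lying over distinct points) meet only at $q_{\infty}$.

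For \ref{diskF2}, I would first check that $F$ is a disk. The surfaces $F_1$ and $F_2$ are fans of vertical rays, respectively upward over the first half and downward over the second half of $C_{17}$; each is a disk with apex $q_{\infty}$, and their edges over the common midpoint $C_{17}(\frac{3\pi}{2})=(0,1+\sqrt2,0)$ fit together into the full vertical line $\ell$ over $(0,1+\sqrt2)$, which is exactly the bounding line of the half-plane $F_3$. Gluing $F_1$ and $F_2$ to $F_3$ along the two halves of $\partial F_3=\ell$ yields a single topological disk whose boundary is $C_{17}\cup C_{19}\cup C_{20}=C_F$. It then remains to show the interior of $F$ is disjoint from every spinal sphere, which is the heart of the matter. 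Projecting gives $\Pi(F)=\Pi(C_{17})\cup\Pi(F_3)$, an arc on the circle $|z-i|=\sqrt2=\partial\Pi(\partial_\infty I_0^+)$ together with the ray $\{iy:y\ge 1+\sqrt2\}$; a short estimate shows this ray lies outside every projected disk and meets the closed disk $\Pi(\partial_\infty I_0^+)$ only at $i(1+\sqrt2)$. Thus the only spheres the interior of $F$ could meet are $\partial_\infty I_0^+$ and the finitely many neighbours whose projected disks the arc $\Pi(C_{17})$ grazes near the two corners. Over each $z\in\Pi(C_{17})$ the point of $\partial_\infty I_0^+$ is the unique such point and lies on $C_{17}\subset C_F$, so the vertical fibres of $F_1,F_2$ leave $\partial_\infty I_0^+$ immediately; for every other relevant sphere $S$ I use that the base point $C_{17}(s)$ lies in the exterior of $S$, so over the fibre through it the two points of $S$ lie on a single side in the $t$-direction, and I verify that the chosen ray direction — upward on the first half, downward on the second — is the one moving away from that side.

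The main obstacle is precisely this last sign verification: showing uniformly in $s$ that $C_{17}$ sits above all its neighbouring spheres on the first half and below them on the second half, so that no vertical ray is trapped in a gap between two sphere interiors. As with the disks $E_B$ and $E_{B^{-1}}$, where the analogous step was carried out through the monotonicity inequalities of Lemma \ref{inequalityabouts}, here it reduces to one-variable inequalities for $d_{\textrm{Cyg}}$ from a vertical fibre to the relevant sphere centre — its fourth power being a quadratic in $t$ with a unique minimum — and checking their signs on the intervals $[\frac{3\pi}{4},\frac{3\pi}{2}]$ and $[\frac{3\pi}{2},\frac{9\pi}{4}]$ is the one genuinely computational part of the proof.
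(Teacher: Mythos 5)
Your proposal is correct and, for most of its length, runs parallel to the paper's own proof: part (i) is the same endpoint-matching and embeddedness check on the three arcs; the disk structure of $F=F_1\cup F_2\cup F_3$ is read off from the construction exactly as in the paper; and the reduction via the projection $\Pi$ --- the ray $\Pi(F_3)$ meeting the projected disks only at the tangency point $(0,1+\sqrt{2})$, and $\Pi(C_{17})$ lying on the boundary circle of $\Pi(\partial_{\infty}I_0^{+})$ so that only finitely many spinal spheres survive --- is precisely the paper's first two steps. The genuine difference is in the final separation step. The paper disposes of the surviving spheres for the downward fan $F_2$ (namely $\partial_{\infty}I_1^{+}$ and $\partial_{\infty}I_1^{\star}$) with a single affine separating plane $3(X+1+Y)-2-T=0$ in Heisenberg coordinates, verifying one trigonometric inequality on each side; you instead propose a fibrewise argument, using that over a base point in the exterior of a Cygan sphere the two intersections of the vertical line with that sphere lie on one side in the $t$-direction, and then checking the sign of $t_{C_{17}(s)}-c_S\bigl(z(s)\bigr)$ for each relevant sphere $S$. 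Both reduce to elementary one-variable inequalities; the separating plane is slicker (one inequality handles a whole sphere uniformly over all fibres), while your version is more systematic and forces you to treat the upward fan $F_1$ against $\partial_{\infty}I_{-1}^{+}$ and $\partial_{\infty}I_{-1}^{\star}$ explicitly --- a case the paper's written proof passes over (it only carries out the computation for $F_2$, with a mislabelled parameter range). One small correction to your setup: the disks $\Pi(\partial_{\infty}I_{\pm1}^{+})$ and $\Pi(\partial_{\infty}I_{\pm1}^{\star})$ are not merely grazed by $\Pi(C_{17})$ near the corners --- each overlaps the arc over a parameter sub-interval of length roughly $\pi/2$ (for instance $\Pi(C_{17}(s))\in\Pi(\partial_{\infty}I_1^{+})$ for all $s\in[7\pi/4,9\pi/4]$) --- so the sign verification must be carried out over that entire range; your fibrewise scheme does accommodate this, but the phrase ``grazes near the two corners'' understates the work.
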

\begin{proof} 
For Item \ref{diskF1}, by the equations of the arcs of $C_F$, it is easy to verify that any pair intersect only at one end point.

For Item \ref{diskF2}, according to the structure of $F$, we naturally have $\partial F=C_F$.
First, the projection $\Pi(F_3)$ is a ray in $\mathbb{C}$, satisfying $$\Pi\big(F_3\cap(\partial_{\infty}I_k^{+}\cup \partial_{\infty}I_k^{-}\cup\partial_{\infty}I_k^{*})\big)=(0,1+\sqrt{2})\in\Pi(\partial_{\infty}I_0^{+}).$$
Second, 
$$\Pi(F_2)=\big(-\sqrt{2}\cos{(s)},1-\sqrt{2}\sin{(s)}\big)\subset \partial\Pi(\partial_{\infty}I_0^{+}),$$ 
where $s\in [\frac{3\pi}{4},\frac{3\pi}{2}]$. 
Therefore, the interior of $F_2$ is disjoint from $\partial_{\infty}I^{+}_k(k\neq 1)$, $\partial_{\infty}I^{-}_k$, and $\partial_{\infty}I^{\star}_k(k\neq 1)$ for $k \in \mathbb{Z}$.
We need only to check that if $F_2\cap(\partial_{\infty}I^{+}_1\cup \partial_{\infty}I^{\star}_1)$ is an empty set.

On the one hand, consider the plane
$$3(X + 1 + Y) - 2-T=0$$ in the Heisenberg group.
For any point $(x_0,y_0,t)\in F_2$, where $(x_0,y_0,t_0)\in C_{17}^1$ and $t\leq t_0$,  we have 
\begin{equation*}
    \begin{aligned}
        3(x_0 + 1 + y_0) - 2-t&\ge 3(x_0 + 1 + y_0) - 2-t_0\\
        &=3(-\sqrt{2}\cos{(s)}+1+1-\sqrt{2}\sin(s))-2+2\sqrt{2}\cos{(s)}\\
        &=4-\sqrt{2}\cos{(s)}-3\sqrt{2}\sin(s)\ge 0. 
    \end{aligned}
\end{equation*}
This means that $F_2$ is on one side of this plane.

On the other hand, for any point $(x_0,y_0,t_0)\in s^{+}_1$,
\begin{equation*}
    \begin{aligned}
        3(x_0 + 1 + y_0)-2-t_0&=\mp\sqrt{2\cos{\alpha}}\left(\cos{\left(\frac{\alpha}{2}-\beta\right)}+\sin{\left(\frac{\alpha}{2}-\beta\right)}\right)-2\sin{\alpha}-6\\
        &\le 2\sqrt{\cos{\alpha}}-2\sin{\alpha}-6<0.
    \end{aligned}
\end{equation*}
Moreover, 
$$\partial_{\infty}I_1^{*}=\left\{\begin{bmatrix}
-\frac{e^{-\alpha i}}{2}\pm2\sqrt{\cos{\alpha}}\cdot e^{(-\frac{\alpha}{2}+\beta)i}-2 \\ 
-2\pm\sqrt{\cos{\alpha}}\cdot e^{(-\frac{\alpha}{2}+\beta)i} \\ 
1\end{bmatrix}
: \ \alpha\in [-\frac{\pi}{2},\frac{\pi}{2}],\ \beta\in [0,\pi)\right\}.
$$
So for any point $(x_0,y_0,t_0)\in \partial_{\infty}I^{*}_1$, 
\begin{equation*}
    \begin{aligned}
    3(x_0 + 1 + y_0)-2-t_0
    &=-5\pm\sqrt{\cos{\alpha}}\left(3\cos{\left(\frac{\alpha}{2}-\beta\right)}+\sin{\left(\frac{\alpha}{2}\right)}-\beta\right)-\sin{\alpha}<0.
    \end{aligned}
\end{equation*}
Therefore, $F_2$ and $(\partial_{\infty}I^{+}_1\cup \partial_{\infty}I^{\star}_1)$ are separated by the above plane.
\end{proof}

Table \ref{table_curves} gives the other curves we used.

Consider a curve $C_{+}$ in $\partial_{\infty} D_{\Sigma}-\big(\cup_{k \in \mathbb{Z}} A^{k}(E_B \cup E_{B^{-1}})\big)$ which consisting of eight edges:
 
\begin{itemize}[-]
    \item glue two copies of $C_6$ together along $x_1$;
    \item take a copy $C_{21}$, $C_{22}$, $C_{23}$, and $C_{24}$;
    \item glue two copies of $C_8$ together along $x_3$. 
\end{itemize} 
 These edges were then glued together along $AB^2(y_1)$, $A(w_3)$, $AB^2(y_2)$, and $A(w_4)$ to obtain the simple closed curve $C_{+}$ (the union of  blue edges in Figure \ref{figure:4pppfordtop}).  
 Similarly, there is a curve $C_{-}=A^{-1}(C_{+})$ that is the union of green edges in Figure \ref{figure:4pppfordtop}.

\begin{prop}\label{prop:diskDminus} There are  proper once punctured  disks $D_{-}$  and $D_{+}$  in the complement of the union  of $\mathcal{S} \cup_{k \in \mathbb{Z}}A^{k}(E_B \cup E_{B^{-1}})$ in  $\partial  \mathbf{H}^2_{\mathbb C}$ such that:
\begin{enumerate}[(i)]
    \item $\partial D_{-}$ is $C_{-}$.
    \item $\partial D_{+}$ is $C_{+}$.
    \item {$D_{+}=A(D_{-})$}.
    \item $D_{-}\cap F=C_{19}$.
    \item $D_{+}\cap F=C_{20}$.
    \item $D_{+}\cap D_{-}=\emptyset$.  
\end{enumerate}	
\end{prop}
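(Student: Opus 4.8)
The plan is to build $D_+$ explicitly as a union of ruled surfaces spanning the eight arcs of $C_+$ (the two copies of $C_6$, the four arcs $C_{21},\dots,C_{24}$ of Table \ref{table_curves}, and the two copies of $C_8$), together with vertical rays running out to $q_\infty$ that realise the puncture, exactly in the style of $E_B$, $E_{B^{-1}}$ and $F$. I would then \emph{define} $D_-:=A^{-1}(D_+)$. With this definition property (iii) is immediate, and since $C_-=A^{-1}(C_+)$ we get $\partial D_-=A^{-1}(C_+)=C_-$ as soon as $\partial D_+=C_+$ is established; thus (i), (ii), (iii) all reduce to carrying out the construction once, for $D_+$.

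For that construction I would first verify, as in Proposition \ref{prop:EB}(i), that $C_+$ is a simple closed curve by checking that consecutive arcs meet only at the prescribed vertices $x_1$, $x_3$, $AB^2(y_1)$, $A(w_3)$, $AB^2(y_2)$, $A(w_4)$; then foliate each region between two consecutive arcs by affine segments (for the bounded pieces) and by vertical $\mathbb{C}$-arcs tending to $q_\infty$ (for the piece carrying the seam $C_{20}$), and glue the patches along their common interior segments. Checking that the resulting surface is embedded, with $\partial D_+=C_+$ and a single puncture at $q_\infty$, gives (ii) and hence (i).

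The bulk of the work, and what I expect to be the main obstacle, is to show that $D_+$ (hence $D_-$) actually lies in the complement of $\mathcal S\cup_{k\in\mathbb Z}A^{k}(E_B\cup E_{B^{-1}})$. Here I would reuse the two devices already in play: project by $\Pi:\mathcal N\to\mathbb C$ and exploit the affine convexity of the projected spinal spheres $\Pi(\partial_\infty I^{\pm}_k)$, $\Pi(\partial_\infty I^{\star}_k)$ to reduce each intersection question to a one-variable sign computation, supplemented by explicit separating planes in Heisenberg coordinates of the kind used in Proposition \ref{prop:diskF}, and by monotonicity of the fourth power of the Cygan distance along each ruling (the sign of $\partial/\partial a$, as in Lemma \ref{inequalityabouts}). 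The genuinely delicate point is the behaviour at the puncture $q_\infty$, where infinitely many translates $A^{k}(\partial_\infty I_\bullet)$ and $A^{k}(E_B\cup E_{B^{-1}})$ accumulate and where $D_+$ becomes vertical; there one must show the vertical rays of $D_+$ stay inside the region $(\mathbb R^2-\mathbb D^2)\times\mathbb R$ of Proposition \ref{prop:unknot}, which I would handle by extending the separating-plane estimates to $t\to\infty$.

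It remains to establish the intersection relations (iv)--(vi), see Figure \ref{figure:4pppfordtop}. For (v) I would show that $F$ and $D_+$ meet exactly along their common boundary arc $C_{20}$: their interiors are separated by the same projection/plane argument, and along the boundary the only shared arc is $C_{20}$ by the vertex bookkeeping. To pass to (iv) I would look for the anti-holomorphic symmetry $\tau:(z,t)\mapsto(-\bar z,-t)$, which permutes the spinal spheres by $I^{\pm}_k\mapsto I^{\pm}_{-k}$, $I^{\star}_k\mapsto I^{\star}_{-k}$, interchanges the two families of caps, fixes $F$ and sends $C_{20}$ to $C_{19}$; if it also exchanges $D_+\leftrightarrow D_-$ then (iv) follows from (v), and otherwise (iv) is proved by the same direct computation. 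Finally, for (vi) I would compare projections via $\Pi(D_+)=\Pi(A(D_-))=\Pi(D_-)-2$: where these planar regions are disjoint we get $D_+\cap D_-=\emptyset$ at once, and any residual overlap can occur only near $q_\infty$, where the $t$-coordinates of the two vertical ends separate them, exactly as in the disjointness $E_B\cap A^{k}(E_B)=\emptyset$ of Proposition \ref{prop:EB}(iii).
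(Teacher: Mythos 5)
Your plan is a legitimate route to the statement, but it is genuinely different from, and much heavier than, what the paper does. The paper's proof is almost entirely soft: it never constructs $D_{\pm}$ explicitly. It observes that, by Proposition \ref{prop:unknot}, the complement of $\mathcal{S}\cup_{k\in\mathbb Z}A^{k}(E_B\cup E_{B^{-1}})$ is an $A$-invariant $(\mathbb R^2-\mathbb D^2)\times\mathbb R$, so the curve $C_{-}$ bounds \emph{some} proper once-punctured disk $D_{-}$ there with puncture $q_\infty$; it then sets $D_{+}=A(D_{-})$ (so (i)--(iii) are free), and obtains (iv)--(vi) by the standard 3-manifold argument: since $\partial D_{-}\cap\partial D_{+}=\emptyset$, one may isotope rel boundary to make the disks disjoint, and likewise isotope $D_{\pm}$ so that they meet $F$ exactly in $C_{19}$ and $C_{20}$. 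In other words, only the boundary curves and the ambient product structure matter; the disks themselves are abstract. What your explicit ruled-surface construction would buy is concreteness (useful if one later wants coordinates on the cells of $\partial N$), at the price of a large number of estimates of the type in Lemma \ref{inequalityabouts} and Proposition \ref{prop:diskF}.

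Two places where your sketch, as written, would not close without the paper's soft argument (or substantially more work). First, for (vi) you propose comparing $\Pi(D_{+})=\Pi(D_{-})-2$; but both punctured disks are unbounded toward $q_\infty$, so their projections necessarily overlap on a large unbounded region, and the claim that the residual overlap is controlled by separating the $t$-coordinates is not justified: $A$ acts by $(z,t)\mapsto(z-2,\,t+4\Im(z))$, which does not translate $t$ uniformly (it is the identity on $t$ over the real axis), so there is no obvious vertical separation between $D_{-}$ and $A(D_{-})$ for an explicit construction. Second, the behaviour at the puncture, where infinitely many translates $A^{k}(\partial_\infty I^{\pm}_\bullet)$, $A^{k}(\partial_\infty I^{\star}_\bullet)$ and $A^{k}(E_B\cup E_{B^{-1}})$ accumulate, is exactly the difficulty that Proposition \ref{prop:unknot} was proved in order to package away; re-deriving it by extending separating-plane estimates to $t\to\infty$ is possible in principle but is the hardest part of your plan and is left unexecuted. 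If you keep your explicit construction for (i)--(iii) and (v), I would still recommend finishing (iv) and (vi) by the isotopy/innermost-disk argument the paper uses, rather than by projections. (Your observation that the anti-holomorphic Cygan isometry $(z,t)\mapsto(-\bar z,-t)$ permutes the spinal spheres by $k\mapsto -k$, preserves $F$ and swaps $C_{19}$ with $C_{20}$ is correct and is a nice way to deduce (iv) from (v).)
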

 
\begin{proof} 
With the boundary $\partial D_{-}$ as stated in Proposition \ref{prop:diskDminus}, then by Proposition \ref{prop:unknot},  $\partial D_{-}$ bounds a proper once punctured disk $D_{-}$ in $\partial  \mathbf{H}^2_{\mathbb C}$, the punctured point is $q _{\infty}$. 
Let $D_{+}=A(D_{-})$. 
Since $\partial D_{-} \cap \partial D_{+}=\emptyset$, by the standard argument in 3-manifold theory, we may assume $D_{-} \cap D_{+}= \emptyset$. 
Moreover, we may isotope $D_{-}$ such that $D_{-}\cap F=C_{19}$ and $D_{+}\cap F=C_{20}$.
We may also assume that the boundaries of $D_{-}$ and $D_{+}$  are fixed during the isotopy. 
\end{proof}

\begin{table}[tbp]
    \caption{Some curve information.}
    \begin{tabular}{|c|c|c|}
    \hline 
    name&ends&belongs to
    \\
    \hline  
    $C_{18}=B(C_{17})$&$x_3, B(x_1)$
    &$s^{-}_0 \cap \partial \mathbf{H}^2_{\mathbb C}$\\
    
    \hline
    $C_9=A^{-1}(C_2)$&$A^{-1}(x_2), B^2(y_1)$
    &$s^{*}_0 \cap \partial \mathbf{H}^2_{\mathbb C}$\\
    
    \hline
    $C_{10}=A^{-1}(C_4)$&$A^{-1}(x_2), B^2(y_2)$
    &$s^{*}_0 \cap \partial \mathbf{H}^2_{\mathbb C}$\\
    
    \hline
    $C_{11}=A^{-1}(C_6)$&$A^{-1}(x_1), B^2(y_1)$
    &$s^{-}_0 \cap \partial \mathbf{H}^2_{\mathbb C}$\\
    
    \hline
    $C_{12}=A^{-1}(C_8)$&$A^{-1}(x_3), B^2(y_2)$
    &$s^{+}_0 \cap \partial \mathbf{H}^2_{\mathbb C}$\\
    
    \hline
    $C_{13}=B(C_5)$&$B(x_1), B(y_1)$
    &$s^{-}_0 \cap \partial \mathbf{H}^2_{\mathbb C}$\\
    
    \hline
    $C_{14}=B^{-1}A^{-1}(C_6)$&$x_1, B(y_1)$
    &$s^{+}_0 \cap \partial \mathbf{H}^2_{\mathbb C}$\\
    
    \hline
    $C_{15}=B^{-1}(C_7)$&$B^{-1}(x_3), B^{-1}(y_3)$
    &$s^{+}_0 \cap \partial \mathbf{H}^2_{\mathbb C}$\\
    
    \hline
    $C_{16}=BA^{-1}(C_8)$&$x_3, B(y_1)$
    &$s^{-}_0 \cap \partial \mathbf{H}^2_{\mathbb C}$\\
    
    \hline
    $C_{21}$& $AB^2(y_1), A(w_3)$
    &$s^{-}_1 \cap s^{*}_1 \cap\partial \mathbf{H}^2_{\mathbb C}$\\
    
    \hline
    $C_{22}$& $AB^2(y_1), A(w_4)$
    &$s^{-}_0 \cap s^{*}_1 \cap \partial \mathbf{H}^2_{\mathbb C}$\\
    
    \hline
    $C_{23}$& $AB^2(y_2), A(w_3)$
    &$s^{+}_1 \cap s^{*}_1 \cap \partial \mathbf{H}^2_{\mathbb C}$\\
    
    \hline
    $C_{24}$& $AB^2(y_2), A(w_4)$
    &$s^{+}_1 \cap s^{*}_1 \cap \partial \mathbf{H}^2_{\mathbb C}$\\
    
    \hline
    $C_{25}=B^{-1}A^{-1}(C_{22})$& $w_1, B(y_1)$
    &$s^{+}_0 \cap s^{-}_0 \cap \partial \mathbf{H}^2_{\mathbb C}$\\
    
    \hline
    $C_{26}=B^{-1}A^{-1}(C_{21})$& $w_4, B(y_1)$
    &$s^{+}_0 \cap s^{-}_0 \cap \partial \mathbf{H}^2_{\mathbb C}$\\
    
    \hline
    $C_{27}=BA^{-1}(C_{24})$& $w_3, B^{-1}(y_3)$
    &$s^{+}_0 \cap s^{-}_0 \cap \partial \mathbf{H}^2_{\mathbb C}$\\
    
    \hline
    $C_{28}=BA^{-1}(C_{23})$& $w_2, B^{-1}(y_3)$
    &$s^{+}_0 \cap s^{-}_0 \cap \partial \mathbf{H}^2_{\mathbb C}$\\
    
    \hline
    \end{tabular}
    \label{table_curves}
\end{table}


 
\section{3-manifold at infinity of
\texorpdfstring{$\Delta_{4,\infty,\infty;\infty}$}{the complex hyperbolic group} %
} 
\label{section:3mfd4ppp}

In this section, we prove Theorem \ref{thm:4pp}.

In Section \ref{section:topology4pppford}, we prove that $\partial_{\infty} D_{\Sigma}-(\cup_{k \in \mathbb{Z}} A^{k}(E_B \cup E_{B^{-1}}))$ is a topologically $A$-invariant $(\mathbb{R}^2-\mathbb{D}^2) \times \mathbb{R}$.
Recall that $A(D_{-})=D_{+}$, the once punctured disks $D_{-}$ and $D_{+}$ separate $\partial_{\infty} D_{\Sigma}-( \cup_{k \in \mathbb{Z}} A^{k} (E_B \cup E_{B^{-1}}))$ into three parts, and exactly one of them, denoted as $U$, contains both $D_{-}$ and $D_{+}$. 
Now $U$ is homeomorphic to $(\mathbb{R}^2-\mathbb{D}^2) \times  [0,1]$, which serves as a fundamental domain of the $A$-action. 
Moreover, the disk $F$ in Subsection \ref{subsection:moredisks} cuts $U$ into a 3-ball, denoted by $N$.   

Figure \ref{figure:3ballfor4ppp} gives a 2-cell decomposition of the boundary 2-sphere of $N$.  
The curves $C_{-}$ and $C_{+}$, which are
the union of  green and blue edges in Figure \ref{figure:4pppfordtop}, now correspond to the green and blue paths in Figure \ref{figure:3ballfor4ppp}. 
In addition, from our choice of $D_{-}$ and $D_{+}$, 
$U$ only contains half of $s^{\star}_0 \cap \partial \mathbf{H}^2_{\mathbb C}$, and 
half of $s^{\star}_1 \cap \partial \mathbf{H}^2_{\mathbb C}$. 
The side-pairing between them is $AB^2$. 

We list several cells in Table \ref{table_cells}.
The 2-cell decomposition of $\partial N$ is as follows: 
\begin{itemize} [-]
	\item we divide $s^{+}_0 \cap \partial\hc$ into four subdisks $B_1$, $B_2$, $B_3$ and  $B_4$, and  divide $s^{-}_0 \cap \partial\hc$  into four subdisks $B_1^{-1}$, $B_2^{-1}$, $B_3^{-1}$ and  $B_4^{-1}$;
	\item we divide $s^{*}_0 \cap \partial\hc$ into disks $B_{+}^2$ and $B_{-}^2$, and divide $s^{*}_1 \cap \partial\hc$ into two disks $AB_{+}^2A^{-1}$ and $AB_{-}^2A^{-1}$;
	\item two copies of $F$, say $F_+$ and $F_-$;
	\item two copies of $E_{B}$, say $E^{+}_B$ and  $E^{-}_B$;
	\item two copies of $E_{B^{-1}}$, say $E_{B^{-1}}^{+}$ and  $E_{B^{-1}}^{-}$;  
	\item we divide $D_{-}$ into two disks $D_{-,1}$ and $D_{-,2}$, and divide $D_{+}$ into two disks  $D_{+,1}$ and $D_{+,2}$. 
\end{itemize}

\begin{figure}[tbp]
	\begin{center}
		\begin{tikzpicture}
		\node at (0,0) {\includegraphics[width=12cm,height=10cm]{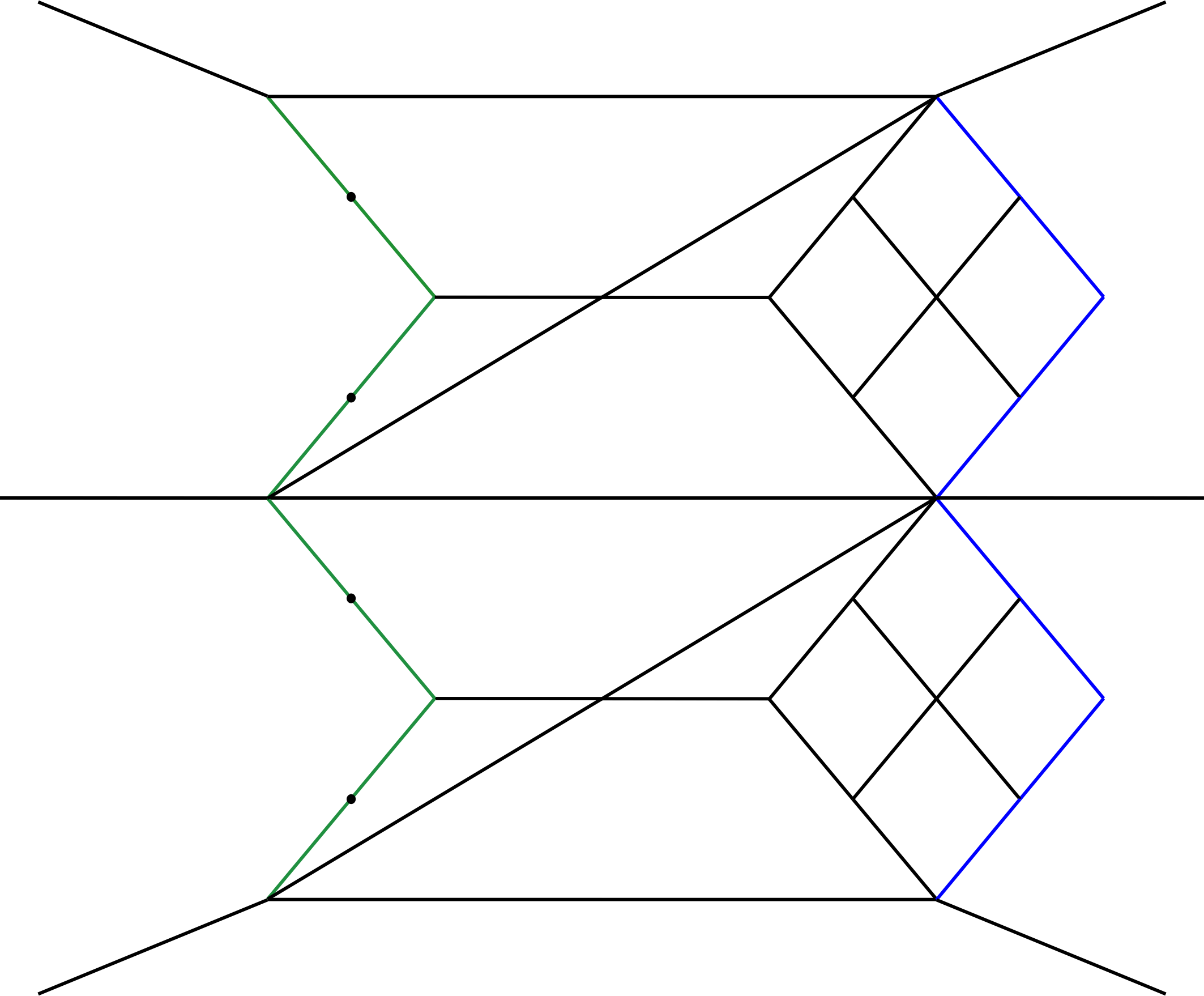}};
		
		\node at (0.2,4.3){\large $F_{-}$};
		\node at (0.6,-4.4){\large $F_{+}$};
            \node at (-4.1,2.1){\large $D_{2,-}$};
		\node at (-4.1,-2.1){\large $D_{1,-}$};
            \node at (5.8,4.1){\large $D_{1,+}$};
		\node at (5.8,-4.23){\large $D_{2,+}$};
  
            \node at (-1.2,3.3){\large $B_2$};
            \node at (-1.2,-1){\large $B_2^{-}$};
		\node at (1.4,2.4){\large $B_4$};
            \node at (-1.45,1.57){\large $B_4^{-}$};

            \node at (1.1,0.8){\large $B_1^{-}$};
            \node at (1.1,-3.15){\large $B_1$};
            \node at (1.4,-1.6){\large $B_3^{-}$};
            \node at (-1.6,-2.57){\large $B_3$};

            \node at (3.3,3.05){\tiny $E_B^{+}$};
            \node at (3.3,1){\tiny $E_{B^{-1}}^{+}$};
            \node at (3.3,-3.15){\tiny $E_B^{-}$};
            \node at (3.3,-1){\tiny $E_{B^{-1}}^{-}$};

            \node at (2.5,2.05){\tiny $B_{+}^2$};
            \node at (2.5,-2.05){\tiny $B_{-}^2$};
            \node at (4.1,2.05){\tiny $AB_{+}^2A^{-1}$};
            \node at (4.1,-2.05){\tiny $AB_{-}^2A^{-1}$};
  
            \node at (-2.8,4.2){\tiny $A^{-1}(x_3)$};
            \node at (-3.8,0.25){\tiny $A^{-1}(x_1)$};
            \node at (-2.9,-4.3){\tiny $A^{-1}(x_3)$};

            \node at (-1.95,2.05){\tiny $w_4$};
            \node at (-1.95,-2.05){\tiny $w_3$};
            \node at (1.6,1.78){\tiny $w_1$};
            \node at (1.6,-2.2){\tiny $w_2$};
            
            \node at (0.33,-2.25){\tiny $B^{-1}(y_2)$};
            \node at (0.33,1.78){\tiny $B(y_1)$};

            \node at (3.3,4.2){\tiny $x_1$};
            \node at (3.3,2.3){\tiny $x_2$};
            \node at (3.3,0.3){\tiny $x_3$};
            \node at (3.3,-4.2){\tiny $x_1$};
            \node at (3.35,-2.3){\tiny $x_2$};

            \node at (5.5,2.05){\tiny $A(w_3)$};
            \node at (5.5,-2.05){\tiny $A(w_4)$};
            
            \node at (4.68,3.15){\tiny $AB^2(y_1)$};
            \node at (4.75,1){\tiny $AB^2(y_2)$};
            \node at (4.65,-3.15){\tiny $AB^2(y_1)$};
            \node at (4.78,-1){\tiny $AB^2(y_2)$};

            \node at (-3.05,3.0){\tiny $B^2(y_2)$};
            \node at (-3.01,1.1){\tiny $B^2(y_1)$};
            \node at (-3.05,-3.0){\tiny $B^2(y_2)$};
            \node at (-3.01,-1.2){\tiny $B^2(y_1)$};

            \node at (2.33,3.15){\tiny $y_1$};
            \node at (2.3,1){\tiny $y_2$};
            \node at (2.33,-3.15){\tiny $y_1$};
            \node at (2.3,-1){\tiny $y_2$};
		
		\end{tikzpicture}
	\end{center}
	\caption{The boundary of the polytope $N$ for  $\Delta_{4,\infty,\infty;\infty}$.}
	\label{figure:3ballfor4ppp}
\end{figure}

By matching the edges of 2-cells in Table \ref{table_cells}, we have that
$AB^2(B^2_{-})=AB^2_{-}A^{-1}$,  $AB^2(B^2_{+})=AB^2_{+}A^{-1}$ and $B(B_i)=B^{-1}_i$ for $i=1,2,3,4$. We also  have that $A(D_{-,1})=D_{+,1}$ and $A(D_{-,2})=D_{+,2}$.

\begin{table}[htbp]
    \caption{2-cells and their edges.}
    \begin{tabular}{|c|c|c|c|}
    \hline 
    name&edges&name&edges
    \\
    \hline  
    $B_1$&$C_{17}, C_{15}, C_{28}, B^2A^{-1}(C_{22}), C_1$
    &$B_1^{-1}$&$C_{18}, C_{7}, B^2A^{-1}(C_{23}), C_{25}, C_{13}$\\
    
    \hline
    $B_2$&$C_{17}, C_{12}, A^{-1}(C_{24}), C_{26}, C_{14}$
    &$B_2^{-1}$&$C_{18}, C_{16}, C_{27}, A^{-1}(C_{21}), C_{11}$\\

    \hline
    $B_3$&$C_{15}, C_{12}, A^{-1}(C_{23}), C_{27}$
    &$B_3^{-1}$&$C_{7}, C_{16}, C_{28}, B^2A^{-1}(C_{24})$\\

    \hline
    $B_4$&$C_{25}, B^2A^{-1}(C_{21}), C_{5}, C_{14}$
    &$B_3^{-1}$&$A^{-1}(C_{22}), C_{26},  C_{13}, C_{11}$\\

    \hline
    $B_{+}^2$&$ B^2A^{-1}(C_{21}), C_1, C_3, B^2A^{-1}(C_{23})$
    &$AB_{+}^2A^{-1}$&$ C_{21}, C_2, C_4, C_{23}$\\

    \hline
    $B_{-}^2$&$B^2A^{-1}(C_{22}), C_1, C_3, B^2A^{-1}(C_{24})$
    &$AB_{-}^2A^{-1}$&$ C_{22}, C_2, C_4, C_{24}$\\
    
    \hline
    \end{tabular}
    \label{table_cells}
\end{table}


Let $M$ be the 3-manifold at infinity of the even subgroup $\Sigma$ of $\Delta_{4,\infty,\infty;\infty}$, which is the quotient space of $N$. 
According to the above analysis, the side-pairings on $N$ are
\begin{equation*}
    \begin{aligned}
        f_1=B&: B_{1}\longrightarrow B^{-1}_{1},\\
        f_2=B&: B_{2}\longrightarrow B^{-1}_{2},\\
        f_3=B&: B_{3}\longrightarrow B^{-1}_{3},\\
        f_4=B&: B_{4}\longrightarrow B^{-1}_{4},\\
	f_5=AB^2&: B_{-}^2\longrightarrow AB_{-}^2A^{-1},\\
        f_6=AB^2&: B_{+}^2\longrightarrow AB_{+}^2A^{-1},\\
	f_7=A&: D_{1,-}\longrightarrow D_{1,+},\\
	f_8=A&: D_{2,-}\longrightarrow D_{2,+},\\
	f_9&: F_{-}\longrightarrow F_{+},\\
	f_{10}&: E_{B}^{-}\longrightarrow E_{B}^{+},\\
	f_{11}&: E_{B^{-1}}^{-}\longrightarrow E_{B^{-1}}^{+}.
    \end{aligned}
\end{equation*}
	

\begin{table}[!htbp]
	\caption{Edge cycles and relations for the 3-manifold at infinity  of $\Delta_{4,\infty,\infty;\infty}$.}
	\centering
	\begin{tabular}{c|c|c}
		\toprule
		\textbf{edge} & \textbf{edge Cycle}  & \textbf{Cycle relation}\\
		\midrule
		$e_{1}$ & $e_1\xrightarrow{f_9} e_{41}  
            \xrightarrow{f_7} e_{21} 
            \xrightarrow{f_8^{-1}}e_1$ 
            & $f^{-1}_8f_7f_9$ \\  [1 ex]
		
    $e_{2}$ & $e_2\xrightarrow{f_2} e_{35} 
    \xrightarrow{f_3^{-1}} e_{42} 
    \xrightarrow{f_7} e_{19} 
    \xrightarrow{f_{11}^{-1}}e_{22}
    \xrightarrow{f_8^{-1}}e_{2}$ 
    & $f^{-1}_8f^{-1}_{11}f_7f^{-1}_3f_2$ \\  [1 ex]

    $e_{3}$ & $e_3\xrightarrow{f_2} e_{38} 
    \xrightarrow{f_3} e_{33} 
    \xrightarrow{f_5} e_{23} 
    \xrightarrow{f_8^{-1}}e_{3}$ 
    & $f^{-1}_8f_5f_3f_2$ \\  [1 ex]

    $e_{4}$ & $e_4\xrightarrow{f_2} e_{44} \xrightarrow{f_7} e_{12} \xrightarrow{f_6^{-1}}e_{7}
    \xrightarrow{f_4}e_{4}$  
    & $ f_4f_6^{-1}f_7f_2$ \\  [1 ex]

    $e_{5}$ & $e_5\xrightarrow{f_4} e_{48} \xrightarrow{f_8} e_{26} \xrightarrow{f_5^{-1}} e_{31} \xrightarrow{f_1}e_{5}$  
    & $f_1f_5^{-1}f_8f_4$ \\  [1 ex]

    $e_{6}$ & $e_6\xrightarrow{f_2} e_{45} 
    \xrightarrow{f_7} e_{11}
    \xrightarrow{f_{10}^{-1}} e_{27}  
    \xrightarrow{f_8^{-1}}e_{47}
    \xrightarrow{f_4^{-1}}e_{6}$
    & $f_4^{-1}f_8^{-1}f_{10}^{-1}f_7f_2$ \\  [1 ex]

    $e_{8}$ & $e_8\xrightarrow{f_4} e_{49} 
    \xrightarrow{f_1^{-1}} e_{29}
    \xrightarrow{f_{10}}e_{8}$
    & $ f_{10}f_1^{-1}f_4$ \\  [1 ex]

    $e_{9}$ & $e_9\xrightarrow{f_2} e_{36} 
    \xrightarrow{f_1^{-1}} e_{40} 
    \xrightarrow{f_9^{-1}}e_{9}$
     & $f_9^{-1}f_1^{-1}f_2$\\  [1 ex]

    $e_{10}$ & $e_{10}\xrightarrow{f_9} e_{28} 
    \xrightarrow{f_8^{-1}} e_{46} 
    \xrightarrow{f_7} e_{10}$
    & $f_7f_8^{-1}f_9$\\  [1 ex]

    $e_{13}$ & $e_{13}\xrightarrow{f_{10}^{-1}}e_{25}
    \xrightarrow{f_5^{-1}}e_{30}
    \xrightarrow{f_{10}} e_{14} 
    \xrightarrow{f_6}e_{13}$
    & $ f_6f_{10}f_5^{-1}f_{10}^{-1}$\\  [1 ex]
    
    $e_{15}$ & $e_{15}\xrightarrow{f_6}e_{18}
    \xrightarrow{f_7^{-1}}e_{43}
    \xrightarrow{f_3} e_{37} 
    \xrightarrow{f_1}e_{15}$
    & $f_1f_3f_7^{-1}f_6$ \\  [1 ex]

    $e_{16}$ & $e_{16}\xrightarrow{f_6}e_{17}
    \xrightarrow{f_{11}^{-1}}e_{24}
    \xrightarrow{f_{5}^{-1}} e_{32} 
    \xrightarrow{f_{11}}e_{16}$
    & $f_{11}f_5^{-1}f_{11}^{-1}f_6$\\  [1 ex]

    $e_{20}$ & $e_{20}\xrightarrow{f_{11}^{-1}}e_{34}
    \xrightarrow{f_3^{-1}}e_{39}
    \xrightarrow{f_1} e_{20}$
    & $f_1f_3^{-1}f_{11}^{-1}$\\
		\bottomrule
	\end{tabular}
	\label{table:edgecircle4pp}
\end{table}

By using the side-pairings on $\partial N$, the edge cycles for the 3-manifold $M$ is given in Table \ref{table:edgecircle4pp}.
Then we get a presentation of the fundamental group of the 3-manifold $M$. 
It is a group $\pi_{1}(M)$ with eleven generators $f_1, f_2, \cdots, f_{11}$ and thirteen relations. 
Via Magma, we can simplify to get a presentation as
$$\pi_1(M)=\left\langle x_1, x_2, x_3, x_4 \bigg| \begin{array}{c}   x_4 x^{-1}_2x_1x^{-1}_4x^{-1}_1x_2,\quad
x_1x^{-1}_4x_3x^{-1}_1x_4x^{-1}_3,\\ [3 pt]
 x_4x_2x_1x_3x_2x^{-1}_4x^{-2}_2x^{-1}_3x^{-1}_1
\end{array}\right\rangle,$$
where $x_1=f_1$, $x_2=f_2$, $x_3=f_3$ and $x_4=f_7$.

Let $8_1^4$ be the 4-component chain link    in Snappy  Census \cite{CullerDunfield:2014}, it is hyperbolic with volume 10.1494160641 and
$$\pi_{1}(S^3-8_1^4)=\big\langle a, b, c, d \big| \begin{array}{c}    ab^{-1}a^{-1}b,
acb^{-1}da^{-1}c^{-1}bd^{-1},
acd^{-1}c^2a^{-1}c^{-1}dc^{-2}
\end{array}\big\rangle.$$ 
Via Magma, there is an isomorphism 
$\phi:\pi_1(M) \rightarrow \pi_{1}(S^3-8_1^4)$ 
with 
 $$\phi(x_1)=c^{-1}b, \quad 
 \phi(x_2)=c^{-1},  \quad 
 \phi(x_3)=b^{-1}d,  \quad
 \phi(x_4)=a.$$
 The inverse isomorphism  is 
$\phi^{-1}:\pi_1(S^3-8_1^4)\rightarrow \pi_1(M)$ with 
$$\phi^{-1}(a)=x_4, \quad 
\phi^{-1}(b)=x_2^{-1}x_1,  \quad 
\phi^{-1}(c)=x_2^{-1},  \quad 
\phi^{-1}(d)=x_2^{-1}x_1x_3.$$

By the prime decompositions of 3-manifolds \cite{Hempel}, $M$ is the connected sum of $S^3-8_1^4$  with $L$, where
$L$ is a closed 3-manifold with a trivial fundamental group. By the solution of the
Poincar\'e conjecture,  $L$ is the 3-sphere. So $M$ is homeomorphic to $S^3-8_1^4$. This finishes the proof of Theorem \ref{thm:4pp}.
\begin{figure}[htbp]
	\begin{center}
		\begin{tikzpicture}
		\node at (0,0) {\includegraphics[width=12cm,height=10cm]{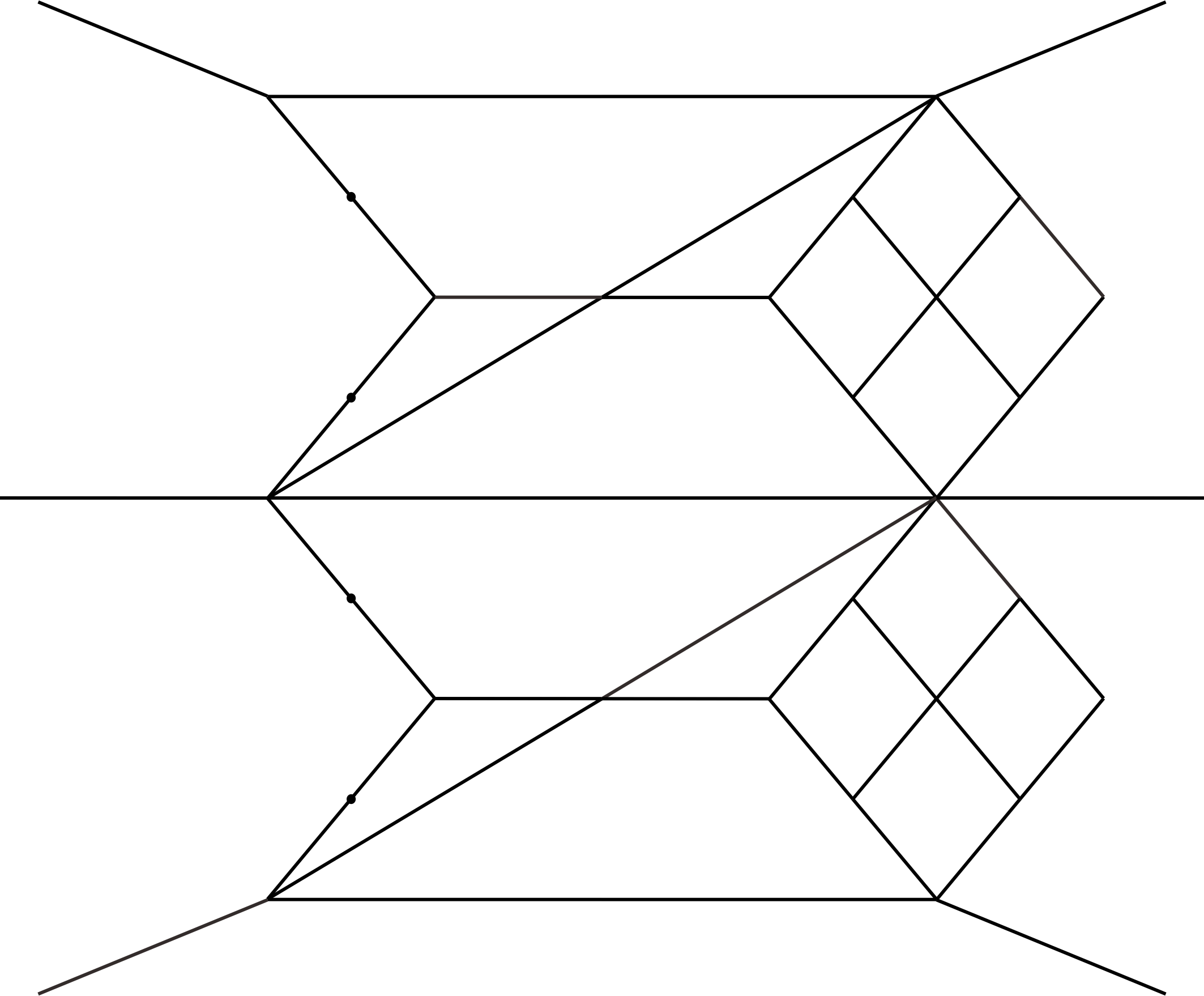}};
		
		\node at (-2.8,4.2){\tiny $A^{-1}(x_3)$};
            \node at (-2.9,-4.3){\tiny $A^{-1}(x_3)$};


            
            
            \node at (3.3,4.2){\tiny $x_1$};
            \node at (3.3,-4.2){\tiny $x_1$};

            \node at (5.5,2.05){\tiny $A(w_3)$};
            \node at (5.5,-2.05){\tiny $A(w_4)$};
            
              \node at (-4.1,2.1){\large $D_{2,-}$};
            \node at (-4.1,-2.1){\large $D_{1,-}$};

		\node at (0,-0.13){$e_{36}$};
		\node at (0,4.2){ $e_{9}$};
		\node at (0,-4.2){ $e_{40}$};

            \node at (-4.7,-0.16){$e_{46}$};
		\node at (-4.5,4.65){ $e_{1}$};
		\node at (-4.5,-4.73){ $e_{41}$};
  \node at (4.7,-0.16){$e_{21}$};
		\node at (4.47,4.7){ $e_{10}$};
		\node at (4.51,-4.7){ $e_{28}$};

        \node at (-1,2.155){ $e_{4}$};
        \node at (-1,-1.85){ $e_{38}$};
        \node at (1.01,1.85){ $e_{5}$};
        \node at (1.01,-2.22){ $e_{37}$};

        \node at (-2.63,3.48){ $e_{2}$};
        \node at (-1.845,2.55){ $e_{3}$};
        \node at (4.0,3.58){ $e_{11}$};
        \node at (4.75,2.65){ $e_{12}$};

        \node at (4.03,-3.58){ $e_{27}$};
        \node at (4.8,-2.65){ $e_{26}$};
        \node at (-3.15,-3.3){ $e_{42}$};
        \node at (-2.45,-2.45){ $e_{43}$};
        
        \node at (-3.15,0.65){ $e_{47}$};
        \node at (-2.46,1.53){ $e_{48}$};
        \node at (4.12,0.51){ $e_{19}$};
        \node at (4.93,1.53){ $e_{18}$};
        \node at (-3.16,-0.65){ $e_{45}$};
        \node at (-2.46,-1.53){ $e_{44}$};

        \node at (3.11,-0.69){ $e_{34}$};
        \node at (1.75,-1.5){ $e_{33}$};
        \node at (2.59,0.49){ $e_{20}$};
        \node at (1.865,1.35){ $e_{15}$};
        \node at (3.0,3.3){ $e_{8}$};
        \node at (1.792,2.5){ $e_{7}$};
        \node at (2.6,-3.55){ $e_{29}$};
        \node at (1.92,-2.75){ $e_{31}$};

        \node at (2.77,2.3){ $e_{14}$};
        \node at (4.0,2.4){ $e_{13}$};
        \node at (3.08,1.3){ $e_{16}$};
        \node at (4.0,1.6){ $e_{17}$};

        \node at (2.75,-1.65){ $e_{32}$};
        \node at (4.0,-2.4){ $e_{25}$};
        \node at (3.05,-2.8){ $e_{30}$};
        \node at (4.0,-1.6){ $e_{24}$};

        \node at (4.11,-0.53){ $e_{22}$};
        \node at (4.86,-1.51){ $e_{23}$};
        
        \node at (1.21,3){ $e_{6}$};
        \node at (-1.21,-3){ $e_{39}$};
        \node at (1.15,-1.05){ $e_{35}$};
        \node at (-1.21,1){ $e_{49}$};

      \node at (-1.2,3.3){\large $B_2$};
            \node at (-1.2,-0.9){\large $B_2^{-}$};
		\node at (1.1,0.75){\large $B_1^{-}$};
            \node at (1.1,-3.15){\large $B_1$};

		\end{tikzpicture}
	\end{center}
	\caption{Edges in the 3-ball $N$ for  $\Delta_{4,\infty,\infty;\infty}$. Since there are too many vertices and 2-cells, we only provide a few of their labels in this figure.
    Their full labels are the same as Figure \ref{figure:3ballfor4ppp}.}
	\label{figure:edgecircle4ppp}
\end{figure}

\section{A conjectural picture
\texorpdfstring{on $\Delta_{p,q,r;\infty}$ of type $A$}{} %
}
\label{sec:3mfdconj}

This section discusses related topics on 3-manifolds associated with complex hyperbolic triangle groups with accidental parabolic elements. 
This hopefully sketches a global conjectural picture of the 3-manifolds at infinity of the groups  $\Delta_{p,q,r;\infty}$ for $3 \leq p \leq 9$.

\subsection{3-manifold at infinity of
\texorpdfstring{$\Delta_{p,\infty,\infty;\infty}$ for $3 \leq p \leq 9$}{} %
}

In \cite{MaXie2021}, Ma-Xie proved the following
 	
\begin{thm} \label{thm:3pp}
	Let $\Gamma=\langle I_1, I_2, I_3 \rangle$ be the complex hyperbolic  triangle group $\Delta_{3,\infty,\infty;\infty}$. Then the 3-manifold at infinity of the even subgroup $\langle I_1I_2,I_2I_3\rangle$ of $\Gamma$ is the magic 3-manifold  in the Snappy  Census.	
\end{thm}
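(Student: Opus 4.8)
The plan is to carry the machinery developed above for $\Delta_{4,\infty,\infty;\infty}$ over to $\Delta_{3,\infty,\infty;\infty}$, exploiting the fact that the order-three case is combinatorially simpler than the order-four case of Theorem \ref{thm:4pp}. First I would normalize the generators exactly as in Subsection \ref{subsection:4ppp}: put $J_1,J_2$ in the standard form \eqref{eq-J1-J2} with $J_1J_2$ unipotent, and solve for the polar vector $\mathbf{n}_3=(x,\,y+z\mathrm{i},\,1)^{T}$ of the mirror of $J_3$ under the constraints that $J_3J_1$, $J_1J_2$ and $J_1J_3J_2J_3$ be unipotent while $J_2J_3$ is elliptic \emph{of order three}. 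This is the same system as before, but with the order-four trace condition $\mathrm{tr}(J_2J_3)=0$ replaced by the order-three condition $\mathrm{tr}(J_2J_3)=-1$; solving it pins down $x,y,z$, and after conjugating so that $A=I_1I_2$ is the standard unipotent parabolic fixing $q_{\infty}$, I set $B=I_2I_3$ (now of order three) and study $\Sigma=\langle A,B\rangle$.

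The crucial structural simplification is that $B^3=\mathrm{id}$ forces $B^2=B^{-1}$, so the three $A$-orbits $\{I_k^{+},I_k^{-},I_k^{\star}\}$ of Section \ref{section:ford} collapse to only two families $I_k^{\pm}=A^{k}I(B^{\pm1})A^{-k}$. I would then redo Propositions \ref{prop:center-radius4ppp} and \ref{prop:pair-disjoint4ppp} for these two families: compute the new centers and radii, record which spheres are disjoint, which meet in Giraud disks (Proposition \ref{prop:Giraud}), and which are tangent at the parabolic fixed points of $AB$ and $AB^{-1}$. With this combinatorial data, the $\langle A\rangle$-coset version of the Poincar\'e polyhedron theorem (\cite{dpp, ParkerWill:2017}) should show that the intersection $D_{\Sigma}$ of the exteriors of these spheres is a fundamental domain for the cosets, that $\Sigma$ is discrete, and that it has presentation $\langle A,B : B^3=\mathrm{id}\rangle$; the ridge cycle now closes in three steps rather than four.

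Next I would analyze $\partial_{\infty}D_{\Sigma}$, and here I expect a genuine simplification over the $p=4$ case: deleting the $B^{2}$-family of spheres, which in Theorem \ref{thm:4pp} produced the ``holes'' responsible for the infinite genus, should leave $\partial_{\infty}D_{\Sigma}$ as an $A$-invariant infinite cylinder, so that no capping disks $E_B,E_{B^{-1}}$ are required. A single $F$-type vertical ruled disk (as in Subsection \ref{subsection:moredisks}) should then cut an $A$-fundamental domain into a $3$-ball $N$. Reading the side-pairings ($B\colon s_0^{+}\to s_0^{-}$, the map $A$ on the cut faces, and $F_-\to F_+$) off the $2$-cell decomposition of $\partial N$ and running the edge cycles, I obtain a presentation of $\pi_1(M)$. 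Note that $B^2=B^{-1}$ merges the would-be cusps $AB^2$ and $AB^{-1}$, leaving exactly the three parabolic classes $A$, $AB$, $AB^{-1}$ — consistent with the three cusps of the magic manifold.

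The main obstacle will be twofold. Topologically, one must prove rigorously that the singular surface $\mathcal{S}$ genuinely bounds a solid torus rather than an incompressible-boundary piece; this demands the same Cygan-distance monotonicity and affine-convexity estimates that form the technical heart of the $p=4$ argument (cf.\ Lemma \ref{inequalityabouts} and Proposition \ref{prop:EB}), reworked for the new arcs. Group-theoretically, one must then exhibit an explicit isomorphism from the presentation of $\pi_1(M)$ to the fundamental group of the magic manifold in the SnapPy census, and conclude via prime decomposition together with the Poincar\'e conjecture, exactly as in the closing argument for Theorem \ref{thm:4pp}. Verifying the isomorphism is a finite Magma/SnapPy computation, but producing the correct generator correspondence — and confirming that the target is indeed the magic manifold — is the step that ties the complex hyperbolic geometry to the advertised topology.
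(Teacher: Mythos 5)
This paper does not prove Theorem \ref{thm:3pp}: it is stated with the explicit attribution ``In \cite{MaXie2021}, Ma-Xie proved the following'', so the only ``proof'' present here is a citation, and there is nothing in this text to compare your argument against. Your proposal is, in effect, a reconstruction of how the result ought to follow by running the $p=4$ machinery of Sections \ref{section:ford}--\ref{section:3mfd4ppp} backwards to the simpler $p=3$ case; chronologically the dependence goes the other way (the present paper generalizes \cite{MaXie2021}), but as an outline it is consistent with the authors' general strategy (Ford domain for the even subgroup, Poincar\'e polyhedron theorem for cosets of $\langle A\rangle$, cutting the ideal boundary into a $3$-ball, Magma identification of $\pi_1$). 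Your observation that $B^{3}=\mathrm{id}$ leaves exactly the parabolic classes $A$, $AB$, $AB^{-1}$, matching the three cusps of the magic manifold, is correct and consistent with Conjecture \ref{conj:ninftyinftyinfty}.

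Two concrete problems remain. First, your trace conditions are wrong: by the criterion quoted in Section \ref{sec:background} (and confirmed by $\mathrm{tr}(B)=1$ for the order-four $B$ of this paper), a nontrivial real-traced elliptic in $\mathbf{SU}(2,1)$ has order $2$, $3$, $4$ for trace $-1$, $0$, $1$ respectively; so the order-four condition you are replacing is $\mathrm{tr}(J_2J_3)=1$, not $0$, and the order-three condition you want is $\mathrm{tr}(J_2J_3)=0$, not $-1$ (which would force order two). This would be caught when solving the system, but as written the normalization step fails. Second, and more seriously, the decisive step rests on the sentence that deleting the $B^{2}$-family ``should leave $\partial_{\infty}D_{\Sigma}$ as an $A$-invariant infinite cylinder'', which you offer only as an expectation. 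As the present paper emphasizes, deciding whether the ideal boundary of a Ford domain is a handlebody or cylinder, rather than a $3$-manifold with incompressible boundary, is precisely the hard, non-formal part of this kind of argument; for $p=4$ it consumes all of Section \ref{section:topology4pppford}, including the explicit ruled disks and the inequalities of Lemma \ref{inequalityabouts}. Without carrying out the $p=3$ tangency and intersection analysis of the spinal spheres one cannot assert the cylinder structure, and hence cannot legitimately read off $\pi_1(M)$. The proposal is therefore a sensible plan rather than a proof, with its central step missing.
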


	The magic 3-manifold in the Snappy Census \cite{CullerDunfield:2014} is the complement of the chain link $6_1^3$ in the 3-sphere,  which is hyperbolic with three cusps and volume 5.3334895669 numerically. 
	
    Recall that we have proved that the 3-manifold at infinity of the even subgroup of $\Delta_{4,\infty,\infty;\infty}$ is the complement of the chain link $8^4_1$  in the 3-sphere. 
	
	Compared with the above results, we denote by $M_p$ the 3-manifold at infinity of the even subgroup of $\Delta_{p,\infty,\infty;\infty}$ for $3 \leq p \leq 9$. Recall that Schwartz's conjecture in \cite{schwartz-icm} bets this group is discrete, so $M_p$ is well-defined.
The complex hyperbolic group $\Delta_{p,\infty,\infty}$ is of type $A$ in the sense of R. Schwartz, therefore $p\le 9$. 
For more details, see \cite{schwartz-icm}.
We note that in the group $\Delta_{p,\infty,\infty;\infty}= \langle I_1,I_2,I_3 \rangle $ for $3 \leq p \leq 9$,  $I_1I_2$, $I_1I_3$ and $I_1I_3I_2I_3$ are parabolic by definition. But it can be checked directly $I_1(I_3I_2)^kI_3$ is also parabolic for any $2 \leq k \leq p-2$. Therefore, there are (at least) $p$ conjugate classes of parabolic elements in $\Delta_{p,\infty,\infty;\infty}$. In particular, there are  (at least) $p-3$ hidden conjugate classes of parabolic elements in  $\Delta_{p,\infty,\infty;\infty}$ which are unexpected. 
We propose the following conjecture.

\begin{conj}\label{conj:ninftyinftyinfty}	 For $3 \leq p \leq 9$, 
	the 3-manifold $M_p$ at infinity of the even subgroup of the complex hyperbolic triangle group $\Delta_{p, \infty,\infty;\infty}$ is the complement of the chain link $C_p$ in the 3-sphere. 
	\end{conj}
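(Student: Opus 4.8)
The plan is to extend the proof of Theorem~\ref{thm:4pp} (together with the $p=3$ argument behind Theorem~\ref{thm:3pp}) to each value $3\le p\le 9$. First I would fix an explicit representation of $\Delta_{p,\infty,\infty;\infty}=\langle I_1,I_2,I_3\rangle$ in $\mathbf{SU}(2,1)$, normalised so that $A=I_1I_2$ is the unipotent Heisenberg translation fixing $q_\infty$ while $B=I_2I_3$ is elliptic of order $p$. As in Section~\ref{subsection:4ppp}, the polar vector of the mirror of $I_3$ carries a single real parameter, which is pinned down by imposing that $I_3I_1$, $I_1I_2$ and $I_1I_3I_2I_3$ be unipotent; for $3\le p\le 9$ I expect the resulting matrices to land in an arithmetic lattice, which would furnish discreteness of $\Sigma=\langle A,B\rangle$ directly, with abstract presentation $\langle A,B:B^p=\mathrm{id}\rangle$. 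Note that $AB=I_1I_3$ and $AB^2=I_1I_3I_2I_3$, and more generally $AB^{j}=I_1(I_3I_2)^{j-1}I_3$, so the $p$ conjugacy classes of parabolic elements promised before the conjecture are exactly those of $A$ and of $AB^{j}$ for $1\le j\le p-1$.

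Next I would construct the Ford domain $D_\Sigma$ centred at $q_\infty$ from the isometric spheres $I(A^{k}B^{j}A^{-k})$ with $k\in\mathbb Z$ and $1\le j\le p-1$, exactly parallel to Section~\ref{subsection:ford}. The $p$-dependent analogues of Propositions~\ref{prop:center-radius4ppp} and~\ref{prop:pair-disjoint4ppp} would record the centres, radii and pairwise intersections of these spheres, including the tangencies at the fixed points $p_{AB^{j}}$ that encode the $p$ cusps. The version of the Poincar\'e polyhedron theorem for the coset decomposition by $\langle A\rangle$ used in Theorem~\ref{thm:fundamental-domain4ppp} should then confirm that $D_\Sigma$ is a fundamental domain and recover the presentation of $\Sigma$.

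The topological core is the analysis of $\partial_\infty D_\Sigma$. Following Section~\ref{section:topology4pppford}, I would produce a family of $A$-invariant ruled disks that cap off the now more numerous "holes" of $\partial_\infty D_\Sigma$, cut it into an $A$-invariant horotube homeomorphic to $(\mathbb{R}^2-\mathbb{D}^2)\times\mathbb{R}$, and then isolate a fundamental $3$-ball $N$ for the $A$-action using punctured disks and a separating disk $F$ as in Subsection~\ref{subsection:moredisks}. Reading the side-pairings and edge cycles off the cell structure of $\partial N$ yields a finite presentation of $\pi_1(M_p)$, which I would match against $\pi_1(S^3-C_p)$; a Magma computation producing an explicit isomorphism, combined with the prime decomposition theorem and the Poincar\'e conjecture exactly as at the end of Section~\ref{section:3mfd4ppp}, would then identify $M_p$ with $S^3-C_p$.

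The hard part will be that the combinatorial complexity of $\partial_\infty D_\Sigma$ grows with $p$: there are $p-1$ families of isometric spheres and correspondingly more holes to cap, so both the intersection catalogue (the analogue of Proposition~\ref{prop:pair-disjoint4ppp}) and the cell decomposition of $\partial N$ become larger, and each must be verified anew. The most delicate point is re-establishing, uniformly in $p$, the trigonometric inequalities that guarantee the capping disks meet the spinal spheres only along their boundaries---the role played by Lemma~\ref{inequalityabouts} in the case $p=4$. Absent a single combinatorial model valid for all $p$ simultaneously, the realistic path is a finite but laborious case analysis for $p=5,6,7,8,9$, which is precisely why the statement is posed as a conjecture rather than a theorem.
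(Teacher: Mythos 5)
This statement is a \emph{conjecture} in the paper: the authors prove only the cases $p=3$ (Theorem~\ref{thm:3pp}, from an earlier paper) and $p=4$ (Theorem~\ref{thm:4pp}, the main result here), and explicitly defer even the case $p=5$ to a subsequent paper. There is therefore no proof in the paper to compare against, and your proposal does not supply one either: it is a research program, not an argument. You describe, accurately, the strategy that the $p=3$ and $p=4$ cases follow and that one would presumably follow for $5\le p\le 9$, but every substantive step is left as something that ``would'' or ``should'' work. As written, nothing new is established.

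Two concrete gaps deserve naming. First, discreteness. You say you ``expect the resulting matrices to land in an arithmetic lattice.'' For $p=4$ the group sits inside $\mathbf{PU}(2,1;\mathbb{Z}[i])$, and for $p=3$ in the Eisenstein--Picard lattice, but for $p=5,7,8,9$ the traces involve $\cos(2\pi/p)$ and there is no reason to expect containment in an arithmetic lattice; the paper itself only says that Schwartz's conjecture ``bets'' the group is discrete, so even the well-definedness of $M_p$ is conditional. Discreteness would have to come out of the Poincar\'e polyhedron theorem applied to the conjectured Ford domain, which brings us to the second gap: the entire combinatorial and analytic content --- the analogue of Proposition~\ref{prop:pair-disjoint4ppp} cataloguing intersections and tangencies of the $p-1$ families of isometric spheres, the construction of the capping disks, the analogue of Lemma~\ref{inequalityabouts} controlling how those disks meet the spinal spheres, the cell decomposition of $\partial N$, and the group-theoretic identification with $\pi_1(S^3-C_p)$ --- is precisely the part that must be done case by case and that you have not done. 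Your closing paragraph concedes this, which is honest, but it means the proposal should be read as a correct description of the intended method rather than as a proof of the conjecture.
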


Here for each $p \geq 3$, the chain link $C_p$ is a hyperbolic link in the 3-sphere with $2p$ crossing number and $p$ components. 
See page 144 of \cite{Thurston} for the diagrams of chain links. We note that chain links $C_3$,  $C_4$ and $C_5$ are links  $6^3_1$, $8^4_1$ and $10^5_1$ respectively 	\cite{CullerDunfield:2014}. 
In Figure \ref{fig:841}, we show the diagrams of $C_4$ and $C_5$.
In other words, results in \cite{MaXie2021}  and this paper are special cases of Conjecture \ref{conj:ninftyinftyinfty}.
We also have very strong evidence that Conjecture \ref{conj:ninftyinftyinfty} is true for $n=5$, which will be proved in a subsequent paper.

\subsection{3-manifold at infinity of
\texorpdfstring{$\Delta_{p,q,r;\infty}$ for $3 \leq p \leq 9$}{} %
}

In \cite{MaXie2021}, Ma-Xie proposed
	\begin{conj}\label{conj:3mninfty}	
		The 3-manifold at infinity of the even subgroup of the complex hyperbolic  triangle group $\Delta_{3,m,\infty;\infty}$ is the hyperbolic 3-manifold obtained via the Dehn surgery of
		$6_1^3$ on the first  cusp with slope $m-2$. Moreover,
		the 3-manifold at infinity of the even subgroup of the complex triangle group $\Delta_{3,m,n;\infty}$ is the hyperbolic 3-manifold obtained by the Dehn fillings of $6_1^3$ on the first two cusps with slopes $m-2$ and $n-2$ respectively.
	\end{conj}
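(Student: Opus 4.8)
The plan is to realize $\Delta_{3,m,n;\infty}$ as a finite deformation of $\Delta_{3,\infty,\infty;\infty}$ and to track how the manifold at infinity of its even subgroup degenerates from the magic manifold $6_1^3$. Recall from Theorem \ref{thm:3pp} that the even subgroup $\Sigma=\langle A,B\rangle$ of $\Delta_{3,\infty,\infty;\infty}$, with $A=I_1I_2$ and $B=I_2I_3$, has manifold at infinity $6_1^3$, whose three cusps correspond to the three conjugacy classes of parabolic elements. In terms of $A,B$ these are represented by $A=I_1I_2$, by $AB=I_1I_3=(I_3I_1)^{-1}$, and by the accidental parabolic $AB^2=I_1I_3I_2I_3=W_A$. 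Under the deformation to $\Delta_{3,m,n;\infty}$ the element $AB$ acquires order $m$ and $A$ acquires order $n$, while $AB^2=W_A$ stays parabolic; heuristically the two cusps carrying now-elliptic holonomy should close up into Dehn fillings and only the $W_A$-cusp should survive. This is exactly the qualitative shape of the conjecture, with two fillings and one residual cusp.

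To make this precise I would first build, for each pair $(m,n)$, a Ford domain $D_{\Sigma_{m,n}}$ for the even subgroup $\Sigma_{m,n}=\langle A,B\rangle$ of $\Delta_{3,m,n;\infty}$, centred at the parabolic fixed point of $W_A=AB^2$ (the cusp that persists). The combinatorics should deform from those of the $6_1^3$ Ford domain, the essential change being that the two families of isometric spheres attached to the classes of $A$ and $AB$ reorganize around fixed points that have moved into $\hc$. Applying the coset version of the Poincar\'e polyhedron theorem exactly as in Theorem \ref{thm:fundamental-domain4ppp} would simultaneously give discreteness of $\Sigma_{m,n}$ and a presentation of $\pi_1(M_{3,m,n})$ depending on $(m,n)$, now carrying the torsion relations $B^3=\mathrm{id}$, $(AB)^m=\mathrm{id}$ and $A^n=\mathrm{id}$ in addition to the relations already present for $6_1^3$.

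The next step is to match these torsion relations with Dehn filling. I would fix a meridian--longitude framing $(\mu_i,\lambda_i)$ of the two relevant cusps of $6_1^3$ inside $\pi_1(6_1^3)$ and determine how the complex-hyperbolic parabolic generators $AB$ and $A$ sit relative to this framing; the claim to verify is that imposing $(AB)^m=\mathrm{id}$ is Dehn filling along slope $m-2$ on the first cusp and $A^n=\mathrm{id}$ is filling along slope $n-2$ on the second. The shift by $2$ should reflect the discrepancy between the group-theoretic parabolic generator and the topological meridian, together with the index-two even subgroup and the translation-by-$2$ normalization of the unipotent cusp action (compare the action $(z,t)\mapsto(z-2,t+4\Im(z))$ used for $A$). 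This would identify $\pi_1(M_{3,m,n})$ with $\pi_1$ of the Dehn-filled $6_1^3$, and, as in the proof of Theorem \ref{thm:4pp}, one promotes the group isomorphism to a homeomorphism of manifolds via the prime decomposition and the geometrization/Poincar\'e theorem, the filled manifolds being Haken or hyperbolic and hence determined by $\pi_1$.

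The hard part is the uniform geometric input. Schwartz's conjecture guaranteeing discreteness of $\Delta_{3,m,n;\infty}$ is known only in special cases, so for general $(m,n)$ one cannot yet assert that $D_{\Sigma_{m,n}}$ is a genuine fundamental domain; controlling the ideal-boundary topology (the infinite-genus handlebody picture, now with two families of ends pinching off) as the spinal spheres move with $(m,n)$ is the central obstacle, along with pinning the framing down precisely enough to obtain the slope $m-2$ rather than some other affine function of $m$. A realistic first target is therefore to confirm the conjecture for small fixed $(m,n)$ by the explicit Ford-domain plus Magma computation used for Theorem \ref{thm:4pp}, thereby checking both the fillings and the slope convention, before attempting the uniform deformation argument for all $m,n$.
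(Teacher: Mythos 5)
This statement is a \emph{conjecture} (Conjecture \ref{conj:3mninfty}, originally from \cite{MaXie2021}): the paper offers no proof of it, and explicitly presents it as open, citing \cite{MaXie2021, Acosta:2019, der-fal, MaXie2020, ParkerWill:2017} only as ``very strong evidence.'' So there is no argument in the paper to compare yours against, and the real question is whether your proposal closes the gap. It does not, and to your credit you say so yourself in the last paragraph. Your identification of the three cusps of $6^3_1$ with the classes of $A=I_1I_2$, $AB=(I_3I_1)^{-1}$ and $W_A=AB^2$ is correct, and the overall strategy (Ford domain centred at the surviving $W_A$-cusp, coset Poincar\'e polyhedron theorem, group presentation, Magma, then geometrization to promote a $\pi_1$-isomorphism to a homeomorphism) is exactly the machinery this paper uses for Theorem \ref{thm:4pp}, so the roadmap is sound in spirit.

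The genuine gaps are these. First, discreteness of $\Delta_{3,m,n;\infty}$ for general $(m,n)$ is itself conjectural (Schwartz's conjecture), so step one of your plan is conditional on an open problem. Second, and more seriously, the combinatorics of the Ford domain do \emph{not} deform continuously in any argument currently available: each case in the literature (\cite{ParkerWill:2017, Acosta:2019, jwx, MaXie2020}) required a separate, delicate verification that the guessed isometric spheres bound a fundamental domain, and the present paper's own contribution (the disks $E_B$, $E_{B^{-1}}$, $F$, $D_\pm$ proving the ideal boundary is a handlebody) is precisely the kind of ad hoc topological control that has no uniform version in $(m,n)$. Third, the slopes $m-2$ and $n-2$ are stated relative to the Martelli--Petronio framing of $6^3_1$ (the paper warns this differs from SnapPy's); your heuristic for the shift by $2$ (index-two subgroup plus the translation-by-$2$ normalization of $A$) is a plausible story but not a computation, and ``pinning the framing down'' is exactly where an error of the form $am+b$ versus $m-2$ could hide. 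As it stands your proposal is a research program whose first actionable step --- verifying small $(m,n)$ by the explicit method of Theorem \ref{thm:4pp} --- is the right one, but no part of the conjecture is proved by it.
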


Note that in Conjecture \ref{conj:3mninfty} we use the meridian-longitude systems of the cusps of $S^3-6_1^3$ as in \cite{MartelliP:2006}, which is different from the meridian-longitude systems in Snappy.
	Results in \cite{MaXie2021, Acosta:2019, der-fal, MaXie2020, ParkerWill:2017} provided very strong evidence of Conjecture \ref{conj:3mninfty}.

In 	\cite{jwx}, Jiang-Xie-Wang proved
\begin{thm} \label{thm:44p}
	Let $\Gamma=\langle I_1, I_2, I_3 \rangle$ be the complex hyperbolic  triangle group $\Delta_{4,4,\infty;\infty}$. Then the 3-manifold at infinity of the even subgroup $\langle I_1I_2,I_2I_3\rangle$ of $\Gamma$ is the  3-manifold $s782$ in the Snappy  Census.	
\end{thm}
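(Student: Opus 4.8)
The plan is to reproduce, for the even subgroup $\Sigma=\langle I_1I_2,I_2I_3\rangle$ of $\Delta_{4,4,\infty;\infty}$, the entire Ford-domain pipeline that proves Theorem \ref{thm:4pp}. First I would fix an explicit matrix model. Starting from the normal form \eqref{eq-J1-J2} for the pair $(J_1,J_2)$ with $J_1J_2$ unipotent, I would parametrize the polar vector $\mathbf n_3$ of the third mirror by one real parameter, as in Subsection \ref{subsection:4ppp}, and impose the three conditions that define the critical $(4,4,\infty;\infty)$ point: that $B=I_2I_3$ has order four, that $I_3I_1$ has order four, and that $W_A=I_1I_3I_2I_3$ is parabolic. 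Since $A=I_1I_2$, $B=I_2I_3$ give $I_3I_1=(AB)^{-1}$ and $W_A=AB^2$, these become trace equations on $B$, $AB$ and $AB^2$; solving them pins down the representation and yields generators $A$ (unipotent, fixing $q_\infty$) and $B$ (order four) in $\mathbf{SU}(2,1)$, which I would then conjugate so that $A$ acts as the horizontal Heisenberg translation of \eqref{eq-A}.

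Next I would construct the Ford domain $D_\Sigma$ centred at $q_\infty$ from the isometric spheres $A^kB^{\pm1}A^{-k}$ and $A^kB^2A^{-k}$ (possibly enlarging this family if a new conjugacy class contributes a facet), and reprove the analogues of Propositions \ref{prop:center-radius4ppp}--\ref{prop:sideparing4pp}: centres and radii in Heisenberg coordinates, the exact list of intersecting pairs, and each triple intersection as a Giraud disk crossed by geodesics. The decisive structural difference from the $\Delta_{4,\infty,\infty;\infty}$ case is that here $AB$ (equivalently $I_3I_1$) is \emph{elliptic} of order four rather than parabolic, so the sphere pairs that were merely tangent at the parabolic points $x_1,x_3$ in Proposition \ref{prop:pair-disjoint4ppp} should now meet transversally in genuine Giraud disks, while the tangency at $x_2=p_{AB^2}$ persists because $AB^2=W_A$ is still parabolic. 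This produces extra ridges and a different ridge-cycle; applying the coset version of the Poincar\'e polyhedron theorem for $\langle A\rangle\backslash\Sigma$, I expect to obtain the presentation $\langle A,B\mid B^4=(AB)^4=\mathrm{id}\rangle$, confirming discreteness together with the orders of $B$ and $AB$.

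With the combinatorics fixed I would analyze the ideal boundary $\partial_\infty D_\Sigma$. The relevant parabolic conjugacy classes are those of $A$ and $W_A=AB^2$, so the cusp count is smaller than in the $8^4_1$ case and the singular surface $\mathcal S$ should be simpler; nonetheless the same obstacle reappears, namely one must exhibit explicit ruled disks (the analogues of $E_B$, $E_{B^{-1}}$, $F$, $D_{\pm}$ and the connecting arcs of Table \ref{table_curves}) capping off the holes and cutting $\partial_\infty D_\Sigma$ down to a single $3$-ball $N$ that is a fundamental domain for the $\langle A\rangle$-action. I would then give a $2$-cell decomposition of $\partial N$, record its side-pairings, and read off the edge cycles to produce a finite presentation of $\pi_1(M)$. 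The final step is identification exactly as at the end of Section \ref{section:3mfd4ppp}: simplify $\pi_1(M)$ by Magma, match it against $\pi_1(s782)$, exhibit an explicit isomorphism, and upgrade it to a homeomorphism $M\cong s782$ via the prime decomposition of $3$-manifolds and the Poincar\'e conjecture.

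I expect the main obstacle to lie in the middle, not at the two ends. The bookkeeping of matrices, edge cycles, and the group isomorphism is routine given enough care, but proving rigorously that $\partial_\infty D_\Sigma$ is a handlebody --- rather than a $3$-manifold with incompressible boundary --- and that the chosen disks genuinely cut it into a ball is the delicate part, and it is made harder here by the transversal Giraud-disk intersections coming from the order-four element $AB$, which destroy the clean tangency picture that organizes the $8^4_1$ proof. Consequently the convexity and projection estimates that control where the capping disks meet the spinal spheres (the analogues of Lemma \ref{inequalityabouts}) will have to be redone from scratch, and a wrong initial guess about the new ridge pattern would be the hardest error to detect.
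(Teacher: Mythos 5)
The first thing to note is that the paper does not prove this statement at all: Theorem \ref{thm:44p} is quoted from Jiang--Wang--Xie \cite{jwx}, so there is no internal proof to compare yours against. Your proposal is a plan to redo the $\Delta_{4,4,\infty;\infty}$ case by the same Ford-domain pipeline used for $\Delta_{4,\infty,\infty;\infty}$ in Sections \ref{section:ford}--\ref{section:3mfd4ppp}, which is indeed the methodology of \cite{jwx}, and your structural predictions are sensible: with $A=I_1I_2$ and $B=I_2I_3$ one has $AB=I_1I_3$ elliptic of order four, $AB^2=W_A$ still parabolic, and a smaller cusp count, consistent with the two cusps of $s782$.

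That said, as written this is a research programme rather than a proof. Every load-bearing step is deferred: the explicit matrices, the verification that the listed isometric spheres suffice and of their exact intersection pattern, the Poincar\'e polyhedron check, the construction of the capping disks and the proof that the ideal boundary cut along them is a ball, the cell decomposition and edge cycles, and the Magma identification. Two specific points deserve flagging. First, your assertion that the former tangencies at $x_1,x_3$ are replaced by transversal Giraud disks is a guess --- the relevant spheres could instead become disjoint, and which of the two happens determines whether the relation $(AB)^4=\mathrm{id}$ arises from a ridge cycle at all; you acknowledge this is the likely failure point. Second, you do not address the fate of $AB^{-1}=I_1I_2I_3I_2$, which is a hidden accidental parabolic in the $\Delta_{4,\infty,\infty;\infty}$ case and contributes a cusp there; for the two-cusp count of $s782$ to come out right it must fail to be parabolic at the $(4,4,\infty;\infty)$ critical point, and that needs to be computed, not assumed.
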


It is not difficult to show that $s782$  can be obtained by Dehn fillings  on two  cusps of  $S^3-8^4_1$.  So, the 3-manifold at infinity of $\Delta_{4,4,\infty;\infty}$ can be obtained from the 3-manifold at infinity of $\Delta_{4,\infty,\infty;\infty}$  by Dehn fillings on two cusps.

Motivated by all above, we also propose 
	\begin{conj}\label{conj:pqrinfty}	For each $3 \leq p \leq 9$, there is a preferred  thrice-cusped hyperbolic 3-manifold $N_p$, which is obtained by Dehn filling the $p-3$ cusps of $M_p$. Moreover,  the 3-manifold at infinity of the even subgroup of the complex hyperbolic triangle group $\Delta_{p,q,r;\infty}$ is obtained by Dehn filling two cusps of  $N_p$ when  $p <q< r$.
\end{conj}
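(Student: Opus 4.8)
The plan is to reduce the conjecture to two ingredients: an explicit identification of the base manifolds $M_p$, and a controlled CR-Dehn-surgery argument that interpolates between the groups $\Delta_{p,\infty,\infty;\infty}$ and $\Delta_{p,q,r;\infty}$. First I would establish Conjecture \ref{conj:ninftyinftyinfty}, that $M_p\cong S^3-C_p$ for every $3\leq p\leq 9$, by running the Ford-domain program of this paper (and of \cite{MaXie2021}) uniformly in $p$. For the even subgroup $\Sigma=\langle A,B\rangle$ with $A=I_1I_2$ parabolic and $B=I_2I_3$ of order $p$, the isometric spheres $A^kB^iA^{-k}$ again bound a handlebody-like Ford domain, and the main labour is to generalise the ruled-disk construction $E_B,E_{B^{-1}}$ and the cutting $3$-ball $N$ so that the resulting side-pairing presentation matches $\pi_1(S^3-C_p)$ via SnapPy and Magma.

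Next I would set up the cusp-to-parabolic dictionary. As noted before Conjecture \ref{conj:ninftyinftyinfty}, the group $\Delta_{p,\infty,\infty;\infty}$ carries exactly $p$ conjugacy classes of parabolics: the two triangle parabolics $I_3I_1$ and $I_1I_2$ (of orders $q=r=\infty$), the accidental parabolic $W_A=I_1I_3I_2I_3$, and the $p-3$ hidden accidental parabolics $I_1(I_3I_2)^kI_3$ for $2\leq k\leq p-2$. Under the homeomorphism $M_p\cong S^3-C_p$ these label the $p$ cusps, and I would pin down the labelling explicitly from the horoball cycles in the Poincar\'e polyhedron theorem, exactly as in the completeness step of Theorem \ref{thm:fundamental-domain4ppp}.

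The heart of the argument is a CR-Dehn-filling statement in the spirit of Schwartz's horotube surgery theorem \cite{Schwartz:2007, schwartz-icm} and of the accidental-parabolic philosophy of \cite{Falbel-Guilloux-Will}. Deforming the one-real-parameter family so that the parabolic of a given cusp becomes elliptic of finite order $n$ should, at the level of the $3$-manifold at infinity, perform a Dehn filling of that cusp, with a slope I expect to be $n-2$ in the framing of \cite{MartelliP:2006} (matching the $p=3$ prediction of Conjecture \ref{conj:3mninfty} and the computation behind Theorem \ref{thm:44p}). Concretely I would first fill the $p-3$ hidden cusps with the preferred slopes to define $N_p$, a three-cusped manifold whose surviving cusps are $I_3I_1$, $I_1I_2$ and $W_A$; then I would show that making $q,r$ finite fills the first two of these, leaving $W_A$ as the unique cusp of the manifold at infinity of $\Delta_{p,q,r;\infty}$. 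The technical engine is a parametrised Ford-domain analysis: track the spinal spheres and the capping disks $E_B,E_{B^{-1}},F$ as functions of the moduli parameters, and verify that the only combinatorial change is the collapse of a horotube cusp to a solid torus, that is, exactly a Dehn filling.

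The main obstacle is twofold. First, global discreteness: the surgery machinery yields discreteness and the filling description only for $q,r$ sufficiently large, whereas the conjecture asserts it for every admissible $(p,q,r)$ in the critical locus, which is essentially Schwartz's discreteness conjecture, still open beyond a handful of cases \cite{ParkerWX:2016, ParkerWill:2017}. Second, and more structurally, the $p-3$ hidden fillings that define $N_p$ are not realised by any single complex hyperbolic triangle group, so the preferred slopes must be extracted from the combinatorial pattern of the handlebody $\partial_{\infty}D_{\Sigma}$ rather than read off from an elliptic relation; making this choice canonical, and proving that the resulting $N_p$ is independent of the path taken through moduli space, is where I expect the genuine difficulty to lie.
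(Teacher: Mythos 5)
The statement you are trying to prove is presented in the paper as a \emph{conjecture} (Conjecture \ref{conj:pqrinfty}); the paper offers no proof, only evidence from the special cases $p=3$ (Theorem \ref{thm:3pp}), $p=4$ (Theorem \ref{thm:4pp}), and the observation that the manifold $s782$ of $\Delta_{4,4,\infty;\infty}$ (Theorem \ref{thm:44p}) is obtained by filling two cusps of $S^3-8^4_1$. Your proposal is therefore being measured against a bar the authors themselves did not attempt to clear, and it does not clear it either: it is a research program whose every load-bearing step is itself an open problem. Step one assumes Conjecture \ref{conj:ninftyinftyinfty} can be established ``uniformly in $p$,'' but the Ford-domain analysis in this paper is anything but uniform --- the number of isometric spheres $I(A^kB^iA^{-k})$, their tangency pattern at the hidden parabolic fixed points, and the ruled-disk constructions $E_B$, $E_{B^{-1}}$, $F$ all depend delicately on $p$, and the paper only claims the cases $p=3,4$ with evidence for $p=5$. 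Step three invokes a CR-Dehn-surgery mechanism in the spirit of Schwartz's horotube surgery, but that machinery is proved for groups with porous limit sets and for fillings realized by actually deforming a parabolic to an elliptic; as you yourself note, the $p-3$ hidden fillings defining $N_p$ are \emph{not} realized by any deformation within the triangle-group family, so there is no elliptic relation from which to read off a slope, and no surgery theorem to apply. Declaring the slopes ``preferred'' and ``extracted from the combinatorial pattern of the handlebody'' names the difficulty without resolving it.

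Concretely, the two gaps you flag at the end are not side issues to be handled after the main argument --- they \emph{are} the conjecture. Global discreteness of $\rho_t(T_{p,q,r})$ on the critical interval is Schwartz's conjecture, proved only in a few cases \cite{ParkerWX:2016, ParkerWill:2017}, so the 3-manifold at infinity of $\Delta_{p,q,r;\infty}$ is not even known to be well defined for general $(p,q,r)$. And the existence of a canonical $N_p$ independent of choices is precisely the content of the first sentence of the conjecture. A correct write-up of your proposal would be an (interesting) restatement of the conjecture together with a plausible strategy, not a proof; to be useful as mathematics it would need, at minimum, a precise definition of the filling slopes on the hidden cusps in terms of the meridian--longitude framing of $S^3-C_p$, and a proof of at least one new instance (say $N_4$ from $M_4$, recovering $s782$ after two further fillings) carried out by the paper's Ford-domain method rather than by appeal to surgery theorems that do not apply in this regime.
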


Here, we assume $p <q< r $, so there is no obvious symmetry of the group $\Delta_{p,q,r;\infty}$. 
From the group  $\Delta_{p,\infty,\infty;\infty}$ to the group   $\Delta_{p,q,r;\infty}$, we first kill  $p-3$ hidden conjugate classes of parabolic elements. 
This process corresponds to  topological Dehn fillings from $M_p$ to $N_p$. Then two more Dehn fillings on $N_p$ should result in the 3-manifold  at infinity of the even subgroup of $\Delta_{p,q,r;\infty}$. 
In particular, the manifolds $M_3$ and $N_3$ are the same.
So in other words, for $3 \leq p \leq 9$, the guessed 3-manifold $M_{p}=S^3-C_p$ should be a seed which will generate many explicit 3-manifolds with spherical CR uniformizations. 
The authors have a weak guess of what the 3-manifold $N_p$ should be, but massive computations are missing.

\end{document}